\documentclass[article,onefignum,onetabnum]{siamart190516}

\usepackage{amssymb}

\newsiamremark{remark}{Remark}
\newsiamremark{assumption}{Assumption}
\crefname{assumption}{Assumption}{Assumptions}

\headers{DPP and HJB Equations for Fractional-Order Systems}{M. I. Gomoyunov}

\title{Dynamic Programming Principle and Hamilton--Jacobi--Bellman Equations for Fractional-Order Systems
\thanks{Submitted to the editors 05.08.2019.
\funding{This work was supported by RSF (project no.~19-71-00073).}}}

\author{Mikhail I. Gomoyunov
    \thanks{Krasovskii Institute of Mathematics and Mechanics of the Ural Branch of the Russian Academy of Sciences, Ekaterinburg, Russia;
    Ural Federal University, Ekaterinburg, Russia
    (\email{m.i.gomoyunov@gmail.com}).}}

\def\Val{\rho} %the value functional
\def\diam{\operatorname{diam}} %the diameter of a partition
\def\dist{\operatorname{dist}} %the metric in G
\def\H{\mathcal{H}} %the Hamiltonian
\def\mU{\mathcal{U}} %the set of admissible controls
\def\rd{\mathrm{d}} %the differential
\def\ess{\operatorname{ess}} %for the ess sup
%the spaces:
\def\Linf{L^\infty}
\def\AC{{AC^\alpha}}
\def\C{{C}}

\begin{document}

\maketitle

\begin{abstract}
    We consider a Bolza-type optimal control problem for a dynamical system described by a fractional differential equation with the Caputo derivative of an order $\alpha \in (0, 1)$.
    The value of this problem is introduced as a functional in a suitable space of histories of motions.
    We prove that this functional satisfies the dynamic programming principle.
    Based on a new notion of coinvariant derivatives of the order $\alpha$, we associate the considered optimal control problem with a Hamilton--Jacobi--Bellman equation.
    Under certain smoothness assumptions, we establish a connection between the value functional and a solution to this equation.
    Moreover, we propose a way of constructing optimal feedback controls.
    The paper concludes with an example.
\end{abstract}

\begin{keywords}
    optimal control, fractional derivatives, dynamic programming principle, Hamilton--Jacobi--Bellman equation, coinvariant derivatives, feedback control
\end{keywords}

\begin{AMS}
    26A33,
    34A08,
    49L20,
    35F21
\end{AMS}

\section{Introduction}

The dynamic programming principle, along with the Pontryagin maximum principle, is one of the fundamental and most effective tools for studying and constructing solutions of various optimal control problems.
The goal of the paper is to extend the dynamic programming principle to the case of fractional-order dynamical systems.
More precisely, we focus on a Bolza-type optimal control problem for a system described by a fractional differential equation with the Caputo derivative of an order $\alpha \in (0, 1)$.
Such problems arise in a wide range of applications, including, e.g., biology \cite{Toledo-Hernandez_et_all_2014}, chemistry \cite{Flores-Tlacuahuac_Biegler_2014}, economics \cite{Hajipour_Hajipour_Baleanu_2018}, electrical engineering \cite{Kaczorek_2016}, and medicine \cite{Hossein_Mohsen_2018}.
For the basics of fractional calculus and the theory of fractional-order differential equations, the reader is referred to, e.g., \cite{Samko_Kilbas_Marichev_1993,Kilbas_Srivastava_Trujillo_2006,Diethelm_2010}.

In order to formulate the dynamic programming principle, for every intermediate time $t \in (0, T)$, it is necessary to introduce an auxiliary optimal control problem (sub-problem) with this time $t$ considered as the initial one.
This auxiliary problem should be consistent with the original one in the sense that the optimality principle (see, e.g., \cite[\S~3]{Bellman_1957}) should be satisfied.
The analysis of simple examples shows (see \cref{Section_Example_1} for a detailed discussion) that this sub-problem should be formulated for the original dynamical system but with the whole history of the motion $x_t(\tau) = x(\tau)$, $\tau \in [0, t]$, taken as the initial data at the time $t$, and, hence, the pair $(t, x_t(\cdot))$ should be regarded as a position of the system formed by the time $t$.
This is explained by the fact that the Caputo derivative has a nonlocal character (see, e.g., \cite{Tarasov_2018}), i.e., its value $(^C D^\alpha x)(t)$ at the time $t$ depends essentially on all previous values $x(\tau)$, $\tau \in [0, t]$.
Thus, we conclude that, in the optimal control problem under consideration, the value (the optimal result) should be introduced as a functional $\Val(t, w(\cdot))$, where the function $w(\tau),$ $\tau \in [0, t]$, is treated as a history of a motion of the system on $[0, t]$.
This circumstance significantly differs the approach developed in the paper from the existing works on the dynamic programming principle for fractional-order systems (see, e.g., \cite{Jumarie_2007,Rakhshan_Effati_Kamyad_2018,Razminia_Mehdi_Torres_2018}), in which the value is a function $\Val(t, x)$, where $x$ is treated as a value of the state vector at the time $t$.
Let us note that this approach, involving the dependence of the value on a history of a motion, is generally accepted in the control theory for functional-differential systems, and, therefore, due to the relationship between fractional-order systems and functional-differential systems of a neutral type (see, e.g., \cite[Sect.~4]{Gomoyunov_2019_DGAA}), it seems natural to use this approach in the considered problem, too.

In many control problems, for finding the value and constructing optimal feedback controls, it is often more convenient to apply not the dynamic programming principle itself, but its infinitesimal form expressed in terms of a Hamilton--Jacobi--Bellman equation.
The second goal of the paper is to derive and study a Hamilton--Jacobi--Bellman equation that corresponds to the considered optimal control problem.

In order to do this, we introduce a new notion of coinvariant ($ci$-) differentiation of the order $\alpha$ of functionals $\varphi(t, w(\cdot))$, where $(t, w(\cdot))$ is a position of the system from a suitable functional space.
This notion naturally agrees (when, formally, $\alpha = 1$) with the notion of $ci$-differentiation, which has proved to be a convenient tool for developing the theory of Hamilton--Jacobi equations for various control problems in functional-differential systems of retarded (see, e.g., \cite{Kim_1999,Lukoyanov_2000_JAMM,Lukoyanov_2003,Lukoyanov_2011_Eng,Kaise_2015,Plaksin_2019_IFAC} and the references therein) and neutral (see, e.g., \cite{Lukoyanov_Gomoyunov_Plaksin_2017_Doklady,Gomoyunov_Plaksin_2018_IFAC}) types (see also \cite{Kaise_Kato_Takahashi_2018}).
Let us note that, e.g., in \cite{Aubin_Haddad_2002,Dupire_2009,Pham_Zhang_2014,Bayraktar_Keller_2018,Saporito_2019} (see also the references therein), some other notions of differentiation of functionals are used, which, in some sense, are close to $ci$-differentiation.
The main advantage of the introduced notion of fractional $ci$-differentiation is the fact that it allows us to obtain a simple formula for the total derivative of the value functional along a motion of the fractional-order system.

Based on this formula and the dynamic programming principle, we associate the considered optimal control problem with the Hamilton--Jacobi--Bellman equation with the $ci$-derivatives of the order $\alpha$ and establish a connection between them.
Namely, we show that, if the value functional is sufficiently smooth ($ci$-smooth of the order $\alpha$), then it satisfies this equation.
On the other hand, we prove that if the Cauchy problem for this equation and a natural right-end condition admits a $ci$-smooth of the order $\alpha$ solution, then it coincides with the value functional, and, moreover, we can construct an optimal feedback control strategy by using the extremal shift in the direction of the $ci$-gradients of the order $\alpha$ of this solution.
Here, we use a quite general notion of feedback control strategies that goes back to the positional approach in differential games \cite{Krasovskii_Subbotin_1988,Krasovskii_Krasovskii_1995} (see also \cite{Osipov_1971,Lukoyanov_2000_JAMM,Lukoyanov_2003,Lukoyanov_2011_Eng} and \cite{Gomoyunov_2019_Trudy_Eng,Gomoyunov_2019_DGAA}).
Finally, we illustrate the obtained results by an example.

The rest of the paper is organized as follows.
In \cref{Section_Preliminaries}, we introduce the notations, recall the definitions of fractional-order integrals and derivatives, and give some of their properties.
In \cref{Section_Optimal_control_problem_initial}, we describe the optimal control problem under consideration.
\Cref{Section_Example_1} is devoted to a discussion of the dynamic programming principle for fractional-order systems on a simple example.
Further, in \cref{Section_Optimal_control_problem}, a general statement of the optimal control problem with an arbitrary initial position $(t, w(\cdot))$ is given, and the value functional is defined.
The dynamic programming principle is proved in \cref{section_dynamic_programming_principle}.
In \Cref{section_properties_of_motions,section_continuity}, we establish some technical properties of motions of the system and show that the value functional is continuous.
In \cref{section_derivatives}, we introduce the notion of $ci$-differentiability of the order $\alpha$.
\Cref{section_HJB,section_optimal_control_strategy} deal with the Hamilton--Jacobi--Bellman equation and its connection with the considered optimal control problem.
Based on the obtained results, in \cref{Section_Example_2}, we solve the example from \cref{Section_Example_1}.
Concluding remarks are given in \cref{section_conclusion}.

\section{Notations and definitions}
\label{Section_Preliminaries}

Let $T > 0$, $n \in \mathbb{N}$, and $\alpha \in (0, 1)$ be fixed throughout the paper.
By $\|\cdot\|$ and $\langle \cdot, \cdot \rangle$, we denote the Euclidian norm and the inner product in $\mathbb{R}^n$.

For every $t \in [0, T]$, let $\Linf([0, t], \mathbb{R}^n)$ be the set of (Lebesgue) measurable functions $\psi: [0, t] \rightarrow \mathbb{R}^n$ such that $\|\psi(\cdot)\|_{[0, t]} < \infty$, where $\|\psi(\cdot)\|_{[0, t]} = \ess \sup_{\tau \in [0, t]} \|\psi(\tau)\|$ if $t > 0$ and $\|\psi(\cdot)\|_{[0, t]} = \|\psi(0)\|$ if $t = 0$.
For a function $\psi(\cdot) \in \Linf([0, t], \mathbb{R}^n)$, the (left-sided) Riemann--Liouville fractional integral of the order $\alpha$ is defined by
\begin{displaymath}
    (I^\alpha \psi)(\tau)
    = \frac{1}{\Gamma(\alpha)} \int_{0}^{\tau} \frac{\psi(\xi)}{(\tau - \xi)^{1 - \alpha}} \, \rd \xi,
    \quad \tau \in [0, t],
\end{displaymath}
where $\Gamma$ is the gamma function.
According to, e.g., \cite[Theorem~3.6 and Remark~3.3]{Samko_Kilbas_Marichev_1993} (see also \cite[Theorem~2.6]{Diethelm_2010}), for $H_\alpha = 2 / \Gamma(\alpha + 1)$, we have
\begin{equation} \label{H_alpha}
    \| (I^\alpha \psi)(\tau) - (I^\alpha \psi)(\tau^\prime) \|
    \leq H_\alpha \|\psi(\cdot)\|_{[0, t]} |\tau - \tau^\prime|^\alpha,
    \quad \tau, \tau^\prime \in [0, t].
\end{equation}

Let us denote by $\AC([0, t], \mathbb{R}^n)$ the set of functions $x: [0, t] \rightarrow \mathbb{R}^n$ that can be represented in the form
\begin{equation} \label{AC^alpha}
    x(\tau)
    = x(0) + (I^\alpha \psi)(\tau),
    \quad \tau \in [0, t],
\end{equation}
for some function $\psi(\cdot) \in \Linf([0, t], \mathbb{R}^n)$.
This set $\AC([0, t], \mathbb{R}^n)$ is considered as a subset of the space $\C([0, t], \mathbb{R}^n)$ of continuous functions endowed with the uniform norm $\|\cdot\|_{[0, t]}$.
In the case $t = 0$, the set $\AC([0, t], \mathbb{R}^n)$ can be identified with $\mathbb{R}^n$.

Let $x(\cdot) \in \AC([0, t], \mathbb{R}^n),$ and let $\psi(\cdot) \in \Linf([0, t], \mathbb{R}^n)$ be such that \cref{AC^alpha} is valid.
Then, according to, e.g., \cite[Theorem~2.5]{Samko_Kilbas_Marichev_1993} (see also \cite[Theorem~2.2]{Diethelm_2010}), we obtain
\begin{equation} \label{int_psi}
    \big(I^{1 - \alpha} (x(\cdot) - x(0))\big)(\tau)
    = \big(I^{1 - \alpha} (I^\alpha \psi)\big)(\tau)
    = \int_{0}^{\tau} \psi(\xi) \, \rd \xi,
    \quad \tau \in [0, t].
\end{equation}
Hence, the (left-sided) Caputo fractional derivative of $x(\cdot)$ of the order $\alpha$ defined by
\begin{equation} \label{Caputo_derivative}
    (^C D^\alpha x) (\tau)
    = \frac{\rd}{\rd \tau} \big(I^{1 - \alpha} (x(\cdot) - x(0))\big)(\tau)
    = \frac{1}{\Gamma(1 - \alpha)} \frac{\rd}{\rd \tau} \int_{0}^{\tau} \frac{x(\xi) - x(0)}{(\tau - \xi)^{\alpha}} \, \rd \xi
\end{equation}
exists for almost every (a.e.) $\tau \in [0, t]$, and, moreover, $(^C D^\alpha x)(\tau) = \psi(\tau)$ for a.e. $\tau \in [0, t]$.
In particular, we have $\|(^C D^\alpha x)(\cdot)\|_{[0, t]} = \|\psi(\cdot)\|_{[0, t]} < \infty$, and, from \cref{AC^alpha,int_psi}, it follows that the equalities
\begin{align}
    & x(\tau)
    = x(0) + \big(I^\alpha (^C D^\alpha x) \big)(\tau), \label{ID} \\
    & \big(I^{1 - \alpha} (x(\cdot) - x(0))\big)(\tau)
    = \int_{0}^{\tau} (^C D^\alpha x) (\xi) \, \rd \xi \label{int_D^alpha}
\end{align}
hold for every $\tau \in [0, t]$.

\section{Optimal control problem}
\label{Section_Optimal_control_problem_initial}

Let us consider a dynamical system which motion is described by the fractional differential equation
\begin{equation} \label{system}
    (^C D^\alpha x)(\tau)
    = f(\tau, x(\tau), u(\tau)),
    \quad x(\tau) \in \mathbb{R}^n, \quad u(\tau) \in P, \quad \tau \in [0, T],
\end{equation}
with the initial condition
\begin{equation} \label{initial_condition}
    x(0)
    = w_0.
\end{equation}
Here, $\tau$ is time;
$x(\tau)$ and $u(\tau)$ are the current values of the state and control vectors, respectively;
$(^C D^\alpha x)(\tau)$ is the Caputo derivative of the order $\alpha$ (see \cref{Caputo_derivative});
$P \subset \mathbb{R}^{n_u}$ is a compact set, $n_u \in \mathbb{N}$;
$w_0 \in \mathbb{R}^n$ determines the initial value of the state vector.

\begin{assumption} \label{Assumption_f}
    The function $f: [0, T] \times \mathbb{R}^n \times P \rightarrow \mathbb{R}^n$ in \cref{system} satisfies the following conditions:
    (a) $f$ is continuous;
    (b) $f$ is locally Lipschitz continuous in the second argument, i.e., for any $R > 0,$ there exists $\lambda_f > 0$ such that
    \begin{displaymath}
        \|f(\tau, x, u) - f(\tau, x^\prime, u)\| \leq \lambda_f \|x - x^\prime\|,
        \quad \tau \in [0, T], \quad x, x^\prime \in B(R), \quad u \in P,
    \end{displaymath}
    where we denote $B(R) = \{x \in \mathbb{R}^n: \, \|x\| \leq R\}$;
    (c) $f$ has sublinear growth in the second argument, i.e., there exists $c_f > 0$ such that
    \begin{equation} \label{c_f}
        \|f(\tau, x, u)\| \leq (1 + \|x\|) c_f,
        \quad \tau \in [0, T], \quad x \in \mathbb{R}^n, \quad u \in P.
    \end{equation}
\end{assumption}

By an admissible control $u(\cdot)$, we mean a measurable function $u: [0, T] \rightarrow P$.
The set of all such controls is denoted by $\mU$.
A motion of system \cref{system} with initial condition \cref{initial_condition} that corresponds to a control $u(\cdot) \in \mU$ is a function $x(\cdot) \in \AC([0, T], \mathbb{R}^n)$ that satisfies the equality in \cref{initial_condition} and, together with $u(\cdot)$, the differential equation in \cref{system} for a.e. $\tau \in [0, T]$.
Due to \cref{Assumption_f}, such a motion $x(\cdot)$ exists and is unique (see, e.g., \cite[Proposition~5.1]{Gomoyunov_2019_FCAA}).
Moreover, $x(\cdot)$ is a unique function from $\C([0, T], \mathbb{R}^n)$ that satisfies the integral equation
\begin{equation} \label{integral_equation_0}
    x(\tau) = w_0 + \frac{1}{\Gamma(\alpha)} \int_{0}^{\tau} \frac{f(\xi, x(\xi), u(\xi))}{(\tau - \xi)^{1 - \alpha}} \, \rd \xi,
    \quad \tau \in [0, T].
\end{equation}

The goal of control is to minimize the cost functional
\begin{equation} \label{cost_functional}
    J(0, w_0, u(\cdot))
    = \sigma(x(T)) + \int_{0}^{T} \chi(\tau, x(\tau), u(\tau)) \, \rd \tau,
\end{equation}
where $x(\cdot)$ is the motion of system \cref{system} with initial condition \cref{initial_condition} that corresponds to a control $u(\cdot) \in \mU$.
The first argument in the notation $J(0, w_0, u(\cdot))$ is added to indicate the initial time, which is convenient for further constructions (see \cref{Section_Optimal_control_problem}).

\begin{assumption} \label{Assumption_sigma_chi}
    The functions $\sigma: \mathbb{R}^n \rightarrow \mathbb{R}$ and $\chi: [0, T] \times \mathbb{R}^n \times P \rightarrow \mathbb{R}$ in \cref{cost_functional} are continuous.
\end{assumption}

The value of the optimal control problem \cref{system,initial_condition,cost_functional} is defined by
\begin{displaymath}
    \Val(0, w_0)
    = \inf_{u(\cdot) \in \mU} J(0, w_0, u(\cdot)).
\end{displaymath}
A control $u^\circ(\cdot) \in \mU$ is called optimal if $J(0, w_0, u^\circ(\cdot)) = \Val(0, w_0)$.
However, since the problem is considered under rather general assumptions, one can not expect that an optimal control exists.
Therefore, we are interested in finding, for every sufficiently small $\varepsilon > 0$, an $\varepsilon$-optimal control $u^{(\varepsilon)}(\cdot) \in \mU$, which satisfies the inequality
\begin{displaymath}
    J(0, w_0, u^{(\varepsilon)}(\cdot))
    \leq \Val(0, w_0) + \varepsilon.
\end{displaymath}

The goal of the paper is to extend the dynamic programming principle to the optimal control problem \cref{system,initial_condition,cost_functional} and, furthermore, derive the corresponding Hamilton--Jacobi--Bellman equation.

The dynamic programming principle is based on the principle of optimality (see, e.g., \cite[\S~3]{Bellman_1957}), which, in particular, states the following.
An optimal control $u^\circ(\cdot)$ has the property that, for any intermediate time $t \in (0, T)$, the control $u^\ast(\tau) = u^\circ(\tau)$, $\tau \in [t, T]$, must be optimal for the sub-problem on the remaining time interval $[t, T]$ with the initial position resulting from the previous control $u_\ast(\tau) = u^\circ(\tau)$, $\tau \in [0, t]$.
Thus, in order to apply this principle to the problem \cref{system,initial_condition,cost_functional}, it is necessary to define what is meant by ``sub-problem'' and by ``position''.
One can propose several quite reasonable ways to answer this question.
In the next section, we consider a simple example to illustrate three of them.
Let us note that a complete solution to this example is given in \cref{Section_Example_2} on the basis of the results of the paper.

\section{Example}
\label{Section_Example_1}

Let the optimal control problem be described by the system
\begin{equation}\label{system_example}
    (^C D^\alpha x)(\tau)
    = \Gamma(\alpha + 1) u(\tau),
    \quad x(\tau) \in \mathbb{R}, \quad |u(\tau)| \leq 1, \quad \tau \in [0, 2],
\end{equation}
with the initial condition
\begin{equation}\label{initial_condition_example}
    x(0)
    = w_0
    = 2^{\alpha - 1} + 1
\end{equation}
and the cost functional
\begin{equation}\label{cost_functional_example}
    J(0, w_0, u(\cdot))
    = x^2(2).
\end{equation}
In \cref{system_example}, the coefficient $\Gamma(\alpha + 1)$ is added to simplify the formulas below.

By direct calculations, one can show that $u^\circ(\tau) = - 1$, $\tau \in [0, 2]$, is the optimal control, and the corresponding motion of \cref{system_example,initial_condition_example} is $x^\circ(\tau) = 2^{\alpha - 1} + 1 - \tau^\alpha > 0$, $\tau \in [0, 2]$.
In particular, we have $\Val(0, w_0) = (1 - 2^{\alpha - 1})^2 > 0$.

Taking into account the discussion of the optimality principle in \cref{Section_Optimal_control_problem_initial}, let us choose $t = 1$ as an intermediate time and propose the following three approaches to defining the appropriate notions of ``sub-problem'' and ``position''.

\subsection{First approach}
\label{Subsection_First_approach}

    As a sub-problem on the time interval $[1, 2]$, let us consider the optimal control problem for the system
    \begin{equation} \label{system_example_2}
        (^C D_1^\alpha y)(\tau)
        = \Gamma(\alpha + 1) u(\tau),
        \quad y(\tau) \in \mathbb{R}, \quad |u(\tau)| \leq 1, \quad \tau \in [1, 2],
    \end{equation}
    with the initial condition
    \begin{equation} \label{initial_condition_example_2}
        y(1)
        = x^\circ(1)
        = 2^{\alpha - 1}
    \end{equation}
    and the cost functional
    \begin{equation} \label{cost_functional_example_2}
        J(1, x^\circ(1), u(\cdot))
        = y^2(2).
    \end{equation}
    Here, by $(^C D_1^\alpha y)(\tau)$, we denote the Caputo derivative (see \cref{Caputo_derivative}) with the lower terminal $1$, which is defined by
    \begin{displaymath}
        (^C D_1^\alpha y) (\tau)
        = \frac{1}{\Gamma(1 - \alpha)} \frac{\rd}{\rd \tau}
        \int_{1}^{\tau} \frac{y(\xi) - y(1)}{(\tau - \xi)^{\alpha}} \, \rd \xi,
        \quad \tau \in [1, 2].
    \end{displaymath}
    Thus, this sub-problem is the same as the original one \cref{system_example,initial_condition_example,cost_functional_example} but with the initial time formally changed to $1$, and the role of the position resulting from the control $u_\ast(\tau) = u^\circ(\tau) = - 1$, $\tau \in [0, 1]$, is played by the pair $(1, x^\circ(1))$.

    Let us show that the optimality principle is not satisfied within this approach, i.e., the control $u^\ast(\tau) = u^\circ(\tau) = - 1$, $\tau \in [1, 2]$, is not optimal in the problem \cref{system_example_2,initial_condition_example_2,cost_functional_example_2}.
    For the motion $y^\ast(\cdot)$ of \cref{system_example_2,initial_condition_example_2} that corresponds to $u^\ast(\cdot)$, we have
    \begin{equation} \label{y^ast}
        y^\ast(\tau)
        = x^\circ(1) + \frac{1}{\Gamma(\alpha)} \int_{1}^{\tau} \frac{\Gamma(\alpha + 1) u^\ast(\xi)}{(\tau - \xi)^{1 - \alpha}} \, \rd \xi
        = 2^{\alpha - 1} - (\tau - 1)^\alpha,
        \quad \tau \in [1, 2],
    \end{equation}
    and, therefore, $J(1, x^\circ(1), u^\ast(\cdot)) = (2^{\alpha - 1} - 1)^2 > 0$.
    However, choosing the control
    \begin{equation} \label{overline_u}
        \bar{u}(\tau) =
        \begin{cases}
            - 1, & \mbox{if } \tau \in [1, \theta), \\
            0, & \mbox{if } \tau \in [\theta, 2],
        \end{cases}
        \quad \theta = 2 - (1 - 2^{\alpha - 1})^{1 / \alpha},
    \end{equation}
    for the corresponding motion $\bar{y}(\cdot)$ of \cref{system_example_2,initial_condition_example_2}, we obtain $\bar{y}(2) = 0$, and, hence, $J(1, x^\circ(1), \bar{u}(\cdot)) = 0 < J(1, x^\circ(1), u^\ast(\cdot))$.
    Thus, the control $u^\ast(\cdot)$ is not optimal.

\subsection{Second approach}

    It follows from the previous arguments that we can not simply change the original system by substituting the fractional derivative with another lower terminal.
    Taking this into account, in a sub-problem, let us consider the same system \cref{system_example}.
    According to \cref{Caputo_derivative}, the value of the fractional derivative $(^C D^\alpha x)(\tau)$ for $\tau \in [0, 2]$ depends on the values $x(\xi)$ for all $\xi \in [0, \tau]$.
    Hence, at least formally, in order to correctly formulate an initial value problem for the differential equation in \cref{system_example} with the initial time $t = 1$, one should specify the values $x(\xi)$ for all $\xi \in [0, 1]$.
    However, all these values are known since they have already been realized during the time interval $[0, 1]$.
    Then, if we denote the history of the motion $x^\circ(\cdot)$ on $[0, 1]$ by $w^\circ(\tau) = x^\circ(\tau)$, $\tau \in [0, 1]$, we obtain the following initial condition:
    \begin{equation} \label{initial_condition_example_3}
        x(\tau)
        = w^\circ(\tau),
        \quad \tau \in [0, 1].
    \end{equation}
    Thus, we come to the optimal control problem (sub-problem) for system \cref{system_example} with initial condition \cref{initial_condition_example_3} and the cost functional
    \begin{equation} \label{cost_functional_example_3}
        J(1, w^\circ(\cdot), u(\cdot))
        = x^2(2).
    \end{equation}
    Here, in accordance with \cref{integral_equation_0}, the motion $x(\cdot)$ for $\tau \in [1, 2]$ is given by
    \begin{equation} \label{motion_example_3}
        x(\tau)
        = w^\circ(0) + \frac{1}{\Gamma(\alpha)} \int_{0}^{1} \frac{(^C D^\alpha w^\circ)(\xi)}{(\tau - \xi)^{1 - \alpha}} \, \rd \xi
        + \frac{1}{\Gamma(\alpha)} \int_{1}^{\tau} \frac{\Gamma(\alpha + 1) u(\xi)}{(\tau - \xi)^{1 - \alpha}} \, \rd \xi.
    \end{equation}

    Nevertheless, one might assume that a solution to the described sub-problem depends not on the whole history $w^\circ(\tau)$, $\tau \in [0, 1]$, of the motion $x^\circ(\cdot)$, but only on the single value $w^\circ(1) = x^\circ(1)$, and, therefore, the pair $(1, x^\circ(1))$ still plays the role of the position (as in \cref{Subsection_First_approach}).
    In general, this assumption may be motivated by a specific form of system \cref{system_example} and/or more subtle results on the uniqueness of solutions of fractional differential equations (see, e.g., \cite{Cong_Tuan_2017}).
    However, let us show that the optimality principle is not satisfied within this approach too, i.e., there exists another history $\tilde{w}(\cdot)$ such that $\tilde{w}(1) = x^\circ(1)$ but the control $u^\ast(\tau) = u^\circ(\tau) = - 1$, $\tau \in [1, 2]$, is not optimal in the problem for system \cref{system_example} with the initial condition
    \begin{equation} \label{initial_condition_example_4}
        x(\tau)
        = \tilde{w}(\tau),
        \quad \tau \in [0, 1],
    \end{equation}
    and the cost functional
    \begin{displaymath}
        J(1, \tilde{w}(\cdot), u(\cdot))
        = x^2(2),
    \end{displaymath}
    where $x(\cdot)$ is defined by analogy with \cref{motion_example_3}.
    Namely, let $\tilde{w}(\tau) = x^\circ(1) = 2^{\alpha - 1}$, $\tau \in [0, 1]$.
    Let us note that this function can be considered as the history on $[0, 1]$ of the motion $\tilde{x}(\cdot)$ of system \cref{system_example} that corresponds to the initial condition $\tilde{x}(0) = 2^{\alpha - 1}$ and the control $\tilde{u}(\tau) = 0$, $\tau \in [0, 2]$.
    As in \cref{y^ast}, for the motion $x^\ast(\cdot)$ of \cref{system_example,initial_condition_example_4} that corresponds to $u^\ast(\cdot)$, we have $x^\ast(\tau) = 2^{\alpha - 1} - (\tau - 1)^\alpha$, $\tau \in [1, 2]$, and, hence, $J(1, \tilde{w}(\cdot), u^\ast(\cdot)) = (2^{\alpha - 1} - 1)^2 > 0$.
    On the other hand, for the motion $\bar{x}(\cdot)$ of \cref{system_example,initial_condition_example_4} that corresponds to $\bar{u}(\cdot)$ from \cref{overline_u}, we obtain $\bar{x}(2) = 0$, and, therefore, $J(1, \tilde{w}(\cdot), \bar{u}(\cdot)) = 0 < J(1, \tilde{w}(\cdot), u^\ast(\cdot))$.

\subsection{Third approach}

    Summarizing the above, we conclude that, as the position, we should consider the pair $(t, w^\circ(\cdot))$ with the whole history $w^\circ(\cdot)$ of the motion $x^\circ(\cdot)$ on the time interval $[0, t]$.
    In fact, it follows from \cref{Theorem_dynamic_programming_principle} below that the optimality principle is satisfied with the sub-problem defined by \cref{system_example,initial_condition_example_3,cost_functional_example_3}.
    Let us note that this approach goes back to the control theory for functional-differential systems.
    For more details on the relationship between fractional-order systems and functional-differential systems of a neutral type, the reader is referred to, e.g., \cite[Sect.~4]{Gomoyunov_2019_DGAA}.

\section{Optimal control problem: statement for an arbitrary position}
\label{Section_Optimal_control_problem}

Thus, in accordance with \cref{Section_Example_1} (see also \cite{Gomoyunov_2019_Trudy_Eng,Gomoyunov_2019_DGAA}), by a position of system \cref{system}, we mean a pair $(t, w(\cdot))$ consisting of a time $t \in [0, T]$ and a function $w(\cdot) \in \AC([0, t], \mathbb{R}^n)$ (see the definition of $\AC([0, t], \mathbb{R}^n)$ in \cref{Section_Preliminaries}), which is treated as a history of a motion $x(\cdot)$ of system \cref{system} on the time interval $[0, t]$.
The set of all such positions $(t, w(\cdot))$ is denoted by $G$.

In this section, for an arbitrary position $(t, w(\cdot)) \in G$ considered as the initial one, we formulate the optimal control problem (sub-problem) that is consistent with the original one \cref{system,initial_condition,cost_functional}.
Besides, we introduce some auxiliary notations.

Let $(t, w(\cdot)) \in G$ be fixed, and let $\theta \in [t, T]$.
The set $\mU(t, \theta)$ of admissible controls on $[t, \theta]$ consists of all measurable functions $u: [t, \theta] \rightarrow P$.
Due to \cref{Assumption_f}, for the position $(t, w(\cdot))$ and a control $u(\cdot) \in \mU(t, \theta)$, there exists a unique motion of system \cref{system}, which is a function $x(\cdot) \in \AC([0, \theta], \mathbb{R}^n)$ that satisfies the equality
\begin{equation} \label{initial_condition_general}
    x(\tau)
    = w(\tau),
    \quad \tau \in [0, t],
\end{equation}
and, together with $u(\cdot)$, the differential equation in \cref{system} for a.e. $\tau \in [t, \theta]$ (see, e.g., \cite[Proposition~2]{Gomoyunov_2019_DGAA}).
This motion $x(\cdot)$ is a unique function from $\C([0, \theta], \mathbb{R}^n)$ that satisfies \cref{initial_condition_general} and, for every $\tau \in [t, \theta]$, the integral equation
\begin{equation} \label{integral_equation}
    x(\tau) = w(0) + \frac{1}{\Gamma(\alpha)} \int_{0}^{t} \frac{(^C D^\alpha w)(\xi)}{(\tau - \xi)^{1 - \alpha}} \, \rd \xi
    + \frac{1}{\Gamma(\alpha)} \int_{t}^{\tau} \frac{f(\xi, x(\xi), u(\xi))}{(\tau - \xi)^{1 - \alpha}} \, \rd \xi.
\end{equation}
Below, for the motion $x(\cdot)$, we also use the notation $x(\cdot \mid t, w(\cdot), \theta, u(\cdot))$.
Besides, for every $\tau \in [0, \theta]$, by $x_\tau(\cdot)$, we denote the history of the motion $x(\cdot)$ on $[0, \tau]$ defined by
\begin{equation} \label{x_t}
    x_\tau(\xi)
    = x(\xi),
    \quad \xi \in [0, \tau].
\end{equation}
Let us note that the inclusions $(\tau, x_\tau(\cdot)) \in G$, $\tau \in [0, \theta]$, are valid.
Moreover, let us emphasize that, under the considered statement, motions of system \cref{system} satisfy the so-called semigroup property (see, e.g., \cite[Sect.~3.2]{Gomoyunov_2019_DGAA}).

In the optimal control problem for system \cref{system} with the initial position $(t, w(\cdot))$, by choosing a control $u(\cdot) \in \mU(t, T)$, we want to minimize the cost functional
\begin{equation} \label{cost_functional_general}
    J(t, w(\cdot), u(\cdot))
    = \sigma(x(T)) + \int_{t}^{T} \chi(\tau, x(\tau), u(\tau)) \, \rd \tau,
\end{equation}
where $x(\cdot) = x(\cdot \mid t, w(\cdot), T, u(\cdot))$.
Thus, the value of this problem is given by
\begin{equation} \label{Val}
    \Val(t, w(\cdot))
    = \inf_{u(\cdot) \in \mU(t, T)} J(t, w(\cdot), u(\cdot)),
\end{equation}
and we are interested in finding, for every $\varepsilon > 0$, an $\varepsilon$-optimal control $u^{(\varepsilon)}(\cdot) \in \mU(t, T)$:
\begin{equation} \label{varepsilon_optimal}
    J(t, w(\cdot), u^{(\varepsilon)}(\cdot))
    \leq \Val(t, w(\cdot)) + \varepsilon.
\end{equation}

Let us note that, in the case $t = 0$ and $w(0) = w_0$, the statement of the considered optimal control problem for an arbitrary initial position $(t, w(\cdot)) \in G$ agrees with the original one given in \cref{Section_Optimal_control_problem_initial}.

\section{Dynamic programming principle}
\label{section_dynamic_programming_principle}

Since relation \cref{Val} defines the value $\Val(t, w(\cdot))$ for every $(t, w(\cdot)) \in G$, we can consider the value functional $\Val: G \rightarrow \mathbb{R}$.
We use the term ``functional'' to emphasize that $\Val(t, w(\cdot))$ depends on the infinite-dimensional argument $w(\cdot)$, which is consistent with the discussion in \cref{Section_Example_1}.

The value functional $\Val$ satisfies the terminal condition
\begin{equation} \label{Val_T}
    \Val(T, w(\cdot))
    = \sigma(w(T)),
    \quad w(\cdot) \in \AC([0, T], \mathbb{R}^n),
\end{equation}
and the following dynamic programming principle.
\begin{theorem} \label{Theorem_dynamic_programming_principle}
    For any $(t, w(\cdot)) \in G$ and $\theta \in [t, T]$, the equality below holds:
    \begin{equation} \label{dynamic_programming_principle}
        \Val(t, w(\cdot))
        = \inf_{u(\cdot) \in \mU(t, \theta)} \Big( \Val(\theta, x(\cdot)) + \int_{t}^{\theta} \chi(\tau, x(\tau), u(\tau)) \, \rd \tau \Big),
    \end{equation}
    where $x(\cdot) = x(\cdot \mid t, w(\cdot), \theta, u(\cdot))$ is the motion of system \cref{system}.
\end{theorem}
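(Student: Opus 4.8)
The plan is to prove the standard dynamic programming principle by establishing two inequalities — the "$\leq$" and the "$\geq$" directions — that together force equality in~\cref{dynamic_programming_principle}. The crucial structural ingredient I would rely on is the \emph{semigroup property} of motions emphasized just after~\cref{x_t}: namely, that concatenating a control $u_1(\cdot) \in \mU(t, \theta)$ on $[t, \theta]$ with a control $u_2(\cdot) \in \mU(\theta, T)$ on $[\theta, T]$ produces a single admissible control on $[t, T]$ whose motion restricts correctly, and whose history at time $\theta$ equals $x_\theta(\cdot)$, the history generated by $u_1(\cdot)$ alone. This is precisely what makes the cost functional split additively, since $\chi$ is integrated over $[t, T] = [t, \theta] \cup [\theta, T]$ and the terminal term $\sigma(x(T))$ depends only on the behavior after $\theta$.

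For the inequality $\Val(t, w(\cdot)) \leq \inf_{u(\cdot)}(\cdots)$, I would fix an arbitrary $u_1(\cdot) \in \mU(t, \theta)$, let $x(\cdot) = x(\cdot \mid t, w(\cdot), \theta, u_1(\cdot))$, and note that $(\theta, x_\theta(\cdot)) \in G$. Given any $\varepsilon > 0$, by the definition~\cref{Val} there is a control $u_2(\cdot) \in \mU(\theta, T)$ that is $\varepsilon$-optimal for the subproblem started at $(\theta, x_\theta(\cdot))$, i.e.\ satisfies~\cref{varepsilon_optimal}. Concatenating $u_1$ and $u_2$ gives a control on $[t, T]$; using the semigroup property to identify its motion with $x(\cdot)$ on $[t, \theta]$ and with the $(\theta, x_\theta(\cdot))$-motion thereafter, the additivity of the cost yields
\begin{displaymath}
    \Val(t, w(\cdot))
    \leq J(t, w(\cdot), u_1 \ast u_2)
    \leq \int_{t}^{\theta} \chi(\tau, x(\tau), u_1(\tau)) \, \rd \tau + \Val(\theta, x_\theta(\cdot)) + \varepsilon.
\end{displaymath}
Letting $\varepsilon \to 0$ and taking the infimum over $u_1(\cdot) \in \mU(t, \theta)$ gives the desired bound.

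For the reverse inequality $\Val(t, w(\cdot)) \geq \inf_{u(\cdot)}(\cdots)$, I would start from an arbitrary control $u(\cdot) \in \mU(t, T)$ and simply restrict it: let $u_1(\cdot)$ be its restriction to $[t, \theta]$ and $u_2(\cdot)$ its restriction to $[\theta, T]$. With $x(\cdot)$ the full motion and $x_\theta(\cdot)$ its history at $\theta$, the definition of $\Val(\theta, x_\theta(\cdot))$ as an infimum gives $\Val(\theta, x_\theta(\cdot)) \leq \int_{\theta}^{T} \chi \, \rd\tau + \sigma(x(T))$, and adding $\int_{t}^{\theta} \chi \, \rd\tau$ to both sides shows that $J(t, w(\cdot), u(\cdot))$ dominates the bracketed quantity in~\cref{dynamic_programming_principle} evaluated at $u_1(\cdot)$. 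Taking the infimum over all $u(\cdot) \in \mU(t, T)$ then yields the claim.

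\emph{The main obstacle} I anticipate is not either inequality in isolation but the careful bookkeeping needed to justify the concatenation/restriction arguments rigorously in the fractional setting. Because the Caputo derivative is nonlocal, one must verify that the motion generated by a concatenated control genuinely coincides with the motion of the subproblem started from the \emph{entire history} $x_\theta(\cdot)$ — this is exactly why the position must carry the full history, as~\cref{Section_Example_1} demonstrates — and that the integral representation~\cref{integral_equation} is consistent under such splicing. I would therefore devote the most attention to invoking the semigroup property precisely and confirming that measurability of the concatenated controls and uniqueness of the corresponding motions (via~\cref{integral_equation}) hold, so that the additive decomposition of~\cref{cost_functional_general} over $[t,\theta]$ and $[\theta,T]$ is fully legitimate.
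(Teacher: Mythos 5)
Your proposal is correct and follows essentially the same scheme as the paper's proof: both directions are obtained by concatenating a control on $[t,\theta]$ with a control on $[\theta,T]$ (respectively, restricting a control on $[t,T]$), invoking the semigroup property of motions so that the cost functional splits additively. The only cosmetic difference is where the $\varepsilon$-optimal control is introduced (you use one for the subproblem at $(\theta, x_\theta(\cdot))$ in the ``$\leq$'' direction, while the paper uses one for the original problem at $(t, w(\cdot))$ in the ``$\geq$'' direction), which does not change the substance of the argument.
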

\begin{proof}
    The proof follows the standard scheme (see, e.g., \cite[Theorem~2.4.2]{Yong_2015}).
    Let $(t, w(\cdot)) \in G$ and $\theta \in [t, T]$ be fixed.
    Let us denote the right-hand side of the equality in (\ref{dynamic_programming_principle}) by $\bar{\Val}$ and prove that $\Val(t, w(\cdot)) = \bar{\Val}$.

    For every $u(\cdot) \in \mU(t, \theta)$ and $u^\ast(\cdot) \in \mU(\theta, T)$, let us consider $\bar{u}(\cdot) \in \mU(t, T)$ such that $\bar{u}(\tau) = u(\tau)$, $\tau \in [t, \theta)$, and $\bar{u}(\tau) = u^\ast(\tau)$, $\tau \in [\theta, T]$.
    Then, we have
    \begin{displaymath}
        \Val(t, w(\cdot))
        \leq J(t, w(\cdot), \bar{u}(\cdot))
        = J(\theta, x(\cdot), u^\ast(\cdot)) + \int_{t}^{\theta} \chi(\tau, x(\tau), u(\tau)) \, \rd \tau,
    \end{displaymath}
    where $x(\cdot) = x(\cdot \mid t, w(\cdot), \theta, u(\cdot))$.
    Taking the infimum over $u^\ast(\cdot) \in \mU(\theta, T)$, we obtain
    \begin{displaymath}
        \Val(t, w(\cdot))
        \leq \Val(\theta, x(\cdot)) + \int_{t}^{\theta} \chi(\tau, x(\tau), u(\tau)) \, \rd \tau.
    \end{displaymath}
    Further, taking the infimum over $u(\cdot) \in \mU(t, \theta)$, we get $\Val(t, w(\cdot)) \leq \bar{\Val}$.

    On the other hand, let $\varepsilon > 0$, and let $u^{(\varepsilon)}(\cdot) \in \mU(t, T)$ be an $\varepsilon$-optimal control (see \cref{varepsilon_optimal}).
    Let us define $u(\tau) = u^{(\varepsilon)}(\tau)$, $\tau \in [t, \theta]$, and $u^\ast(\tau) = u^{(\varepsilon)}(\tau)$, $\tau \in [\theta, T]$, and consider the motion $x(\cdot) = x(\cdot \mid t, w(\cdot), \theta, u(\cdot))$ of system \cref{system}.
    Hence, we derive
    \begin{multline*}
        \Val(t, w(\cdot)) + \varepsilon
        \geq J(t, w(\cdot), u^{(\varepsilon)}(\cdot))
        = J(\theta, x(\cdot), u^\ast(\cdot)) + \int_{t}^{\theta} \chi(\tau, x(\tau), u(\tau)) \, \rd \tau \\
        \geq \Val(\theta, x(\cdot)) + \int_{t}^{\theta} \chi(\tau, x(\tau), u(\tau)) \, \rd \tau
        \geq \bar{\Val}.
    \end{multline*}
    Since this estimate is valid for every $\varepsilon > 0$, then $\Val(t, w(\cdot)) \geq \bar{\Val}$.
    Thus, we have $\Val(t, w(\cdot)) = \bar{\Val}$, and the theorem is proved.
\end{proof}

The dynamic programming principle gives a nonlocal characterization of the value functional $\Val$.
However, in order to find this functional and, based on this, construct $\varepsilon$-optimal controls, it is often more convenient to apply an infinitesimal form of this principle, which is expressed in terms of a Hamilton--Jacobi--Bellman equation.
Before proceeding to the derivation and study of the Hamilton--Jacobi--Bellman equation associated with the optimal control problem \cref{system,cost_functional}, we establish some technical properties of motions of system \cref{system} and prove that the value functional $\Val$ is continuous.

\section{Properties of motions of the system}
\label{section_properties_of_motions}

The goal of this section is to prove the proposition below, which states the uniform boundedness and H\"{o}lder continuity of motions of system \cref{system}, and also their Lipschitz dependence on the initial positions.
\begin{proposition} \label{proposition_properties}
    For any $R > 0$, the following statements hold:
    \begin{itemize}
    \item[$i)$]
        there exists $M_x > 0$ such that, for any $(t, w(\cdot)) \in G$ satisfying $\|w(\cdot)\|_{[0, t]} \leq R$, any $\theta \in [t, T]$, and any $u(\cdot) \in \mU(t, \theta)$, the inequality $\|x(\cdot)\|_{[0, \theta]} \leq M_x$ is valid for the motion $x(\cdot) = x(\cdot \mid t, w(\cdot), \theta, u(\cdot))$ of system \cref{system};
    \item[$ii)$]
        there exists $H_x > 0$ such that, for any $(t, w(\cdot)) \in G$ satisfying $\|w(\cdot)\|_{[0, t]} \leq R$ and $\|(^C D^\alpha w)(\cdot)\|_{[0, t]} \leq R$, any $\theta \in [t, T]$, and any $u(\cdot) \in \mU(t, \theta)$, for the motion $x(\cdot) = x(\cdot \mid t, w(\cdot), \theta, u(\cdot))$ of system \cref{system}, the inequality below holds:
        \begin{equation} \label{H}
            \|x(\tau) - x(\tau^\prime)\|
            \leq H_x |\tau - \tau^\prime|^\alpha,
            \quad \tau, \tau^\prime \in [0, \theta];
        \end{equation}
    \item[$iii)$]
        there exists $L_x > 0$ such that, for any $(t, w(\cdot))$, $(t, w^\prime(\cdot)) \in G$ satisfying $\|w(\cdot)\|_{[0, t]} \leq R$ and $\|w^\prime(\cdot)\|_{[0, t]} \leq R$, any $\theta \in [t, T]$, and any $u(\cdot) \in \mU(t, \theta)$, for the motions $x(\cdot) = x(\cdot \mid t, w(\cdot), \theta, u(\cdot))$ and $x^\prime(\cdot) = x(\cdot \mid t, w^\prime(\cdot), \theta, u(\cdot))$ of system \cref{system}, the following inequality is valid:
        \begin{equation} \label{L}
            \|x(\cdot) - x^\prime(\cdot)\|_{[0, \theta]}
            \leq L_x \|w(\cdot) - w^\prime(\cdot)\|_{[0, t]}.
        \end{equation}
    \end{itemize}
\end{proposition}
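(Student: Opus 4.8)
The three assertions all rest on the integral equation~\cref{integral_equation}, in which the motion $x(\cdot)$ on $[t,\theta]$ is the sum of a \emph{history term}
\[
    g(\tau) := w(0) + \frac{1}{\Gamma(\alpha)}\int_0^t \frac{(^C D^\alpha w)(\xi)}{(\tau-\xi)^{1-\alpha}}\,\rd\xi, \qquad \tau\in[t,\theta],
\]
which depends only on the initial position $(t,w(\cdot))$, and a controlled term $\tfrac{1}{\Gamma(\alpha)}\int_t^\tau (\tau-\xi)^{\alpha-1}f(\xi,x(\xi),u(\xi))\,\rd\xi$. The engine in each part is a fractional (Henry--Gronwall) inequality: from $v(\tau)\le A + B\int_t^\tau(\tau-\xi)^{\alpha-1}v(\xi)\,\rd\xi$ with $A,B\ge0$ one gets $v(\tau)\le A\,C(B,\alpha,T)$ uniformly on $[t,\theta]\subseteq[0,T]$.

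For $i)$ I would first note $\|x(\tau)\|=\|w(\tau)\|\le R$ on $[0,t]$, and for $\tau\in[t,\theta]$ combine~\cref{integral_equation} with the sublinear bound~\cref{c_f} to obtain $\|x(\tau)\|\le \|g(\tau)\| + \tfrac{c_f T^\alpha}{\Gamma(\alpha+1)} + \tfrac{c_f}{\Gamma(\alpha)}\int_t^\tau(\tau-\xi)^{\alpha-1}\|x(\xi)\|\,\rd\xi$; the Henry--Gronwall inequality then yields $\|x(\cdot)\|_{[0,\theta]}\le M_x$ \emph{provided} $\|g(\cdot)\|_{[t,\theta]}$ is bounded by a constant depending only on $R$. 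This last bound is the heart of the matter, and its delicacy is that only $\|w(\cdot)\|_{[0,t]}\le R$ is available, \emph{not} $\|(^C D^\alpha w)(\cdot)\|_{[0,t]}$. I would establish it by passing to $\Phi:=I^{1-\alpha}(w(\cdot)-w(0))$, which by~\cref{int_D^alpha} is the primitive of $^C D^\alpha w$ and, by the estimate~\cref{H_alpha} used with order $1-\alpha$ and $\|w(\cdot)-w(0)\|_{[0,t]}\le 2R$, satisfies $\|\Phi(\xi)-\Phi(\xi')\|\le 2RH_{1-\alpha}|\xi-\xi'|^{1-\alpha}$. Writing $g(\tau)-w(t)$ as a Riemann--Stieltjes integral against $\rd\Phi$ and integrating by parts, the two kernels $(\tau-\xi)^{\alpha-1}$ and $(t-\xi)^{\alpha-1}$ combine so that the estimate closes with a constant $\propto R$.

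Part $ii)$ is then short. Under the extra hypothesis $\|(^C D^\alpha w)(\cdot)\|_{[0,t]}\le R$, the function $\phi:={}^C D^\alpha x$ is bounded on all of $[0,\theta]$: it equals $^C D^\alpha w$ on $[0,t]$ (norm $\le R$) and $f(\cdot,x(\cdot),u(\cdot))$ on $[t,\theta]$, whose norm is $\le c_f(1+M_x)$ by~\cref{c_f} and part $i)$. Since $x(\tau)=x(0)+(I^\alpha\phi)(\tau)$ by~\cref{ID}, the estimate~\cref{H_alpha} gives~\cref{H} at once, with $H_x=H_\alpha\max\{R,\,c_f(1+M_x)\}$. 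For $iii)$ I would subtract the two equations~\cref{integral_equation} for $w(\cdot)$ and $w'(\cdot)$ (the control $u(\cdot)$ and the times $t,\theta$ being common): on $[0,t]$ one has $x(\tau)-x'(\tau)=w(\tau)-w'(\tau)$, so $\|x(\cdot)-x'(\cdot)\|_{[0,t]}\le\|w(\cdot)-w'(\cdot)\|_{[0,t]}$, while on $[t,\theta]$ the difference splits into the history term applied to $w(\cdot)-w'(\cdot)$, bounded by $C\|w(\cdot)-w'(\cdot)\|_{[0,t]}$ by the very same argument as in $i)$, plus $\tfrac{1}{\Gamma(\alpha)}\int_t^\tau(\tau-\xi)^{\alpha-1}\big(f(\xi,x(\xi),u(\xi))-f(\xi,x'(\xi),u(\xi))\big)\,\rd\xi$, which by \cref{Assumption_f}(b)---applicable since $x(\cdot),x'(\cdot)$ stay in $B(M_x)$ by $i)$---is $\le \tfrac{\lambda_f}{\Gamma(\alpha)}\int_t^\tau(\tau-\xi)^{\alpha-1}\|x(\xi)-x'(\xi)\|\,\rd\xi$. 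A final Henry--Gronwall step gives~\cref{L}.

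The main obstacle is exactly the uniform bound on the history term $g(\cdot)$ in $i)$ (reused in $iii)$) under only $\|w(\cdot)\|_{[0,t]}\le R$. The naive estimate, which bounds $\|\Phi(\xi)-\Phi(t)\|$ by $2RH_{1-\alpha}(t-\xi)^{1-\alpha}$ and integrates it against the single kernel $(\tau-\xi)^{\alpha-2}$, diverges logarithmically as $\tau\to t^+$; one must instead keep the \emph{difference} of the kernels $(\tau-\xi)^{\alpha-1}$ and $(t-\xi)^{\alpha-1}$ together and exploit their cancellation near the diagonal $\xi=t$ (alternatively, one may simply invoke the a priori estimates already available for this system in the works cited above).
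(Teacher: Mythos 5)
Your overall architecture coincides with the paper's: decompose \cref{integral_equation} into a history term plus a controlled term, close parts $i)$ and $iii)$ with the fractional Gronwall inequality (the paper's \cref{lemma_Bellman_Gronwall}), and obtain part $ii)$ from the bound on $({}^C D^\alpha x)(\cdot)$ together with \cref{ID,H_alpha}; your parts $ii)$ and $iii)$ are, in substance, exactly the paper's proof. The genuine gap is at the step you yourself call the heart of the matter, and the repair you sketch does not work. Writing $g(\tau)-w(t)=\frac{1}{\Gamma(\alpha)}\int_0^t\big[(\tau-\xi)^{\alpha-1}-(t-\xi)^{\alpha-1}\big]\,\rd\Phi(\xi)$ and integrating by parts against $\Phi(\xi)-\Phi(t)$ (the subtraction is needed to dispose of the boundary term at $\xi=t$), the term left to estimate is
\begin{displaymath}
    (1-\alpha)\Big\|\int_0^t \big(\Phi(\xi)-\Phi(t)\big)\,\big[(t-\xi)^{\alpha-2}-(\tau-\xi)^{\alpha-2}\big]\,\rd\xi\Big\|,
\end{displaymath}
and the only quantitative information you have, $\|\Phi(\xi)-\Phi(t)\|\le 2RH_{1-\alpha}(t-\xi)^{1-\alpha}$, gives the upper bound
\begin{displaymath}
    2RH_{1-\alpha}(1-\alpha)\int_0^t (t-\xi)^{1-\alpha}\big[(t-\xi)^{\alpha-2}-(\tau-\xi)^{\alpha-2}\big]\,\rd\xi
    = +\infty
\end{displaymath}
for every fixed $\tau>t$: near $\xi=t$ the first product behaves like $(t-\xi)^{-1}$ while the subtracted kernel $(\tau-\xi)^{\alpha-2}$ stays bounded there, so the kernel difference cancels only where $t-\xi\gg\tau-t$, i.e.\ away from the point responsible for the divergence. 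Thus keeping the kernels together does not rescue the triangle-inequality estimate; the H\"{o}lder modulus of $\Phi=I^{1-\alpha}(w(\cdot)-w(0))$ is simply too little information.

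What closes the argument --- and what the paper uses --- is \cref{lemma_mean_value}: for every $(t,w(\cdot))\in G$ and $\tau\in[t,T]$,
\begin{displaymath}
    \frac{1}{\Gamma(\alpha)}\Big\|\int_0^t\frac{({}^C D^\alpha w)(\xi)}{(\tau-\xi)^{1-\alpha}}\,\rd\xi\Big\|
    \leq \max_{\xi\in[0,t]}\|w(\xi)-w(0)\|.
\end{displaymath}
Its proof (by the scheme of \cite[Theorem~14.10 and Corollary~1]{Samko_Kilbas_Marichev_1993}) exploits the full structure $w-w(0)=I^\alpha({}^C D^\alpha w)$ rather than the induced H\"{o}lder bound: the tail integral is represented as
\begin{displaymath}
    \frac{\sin\pi\alpha}{\pi}\int_0^t\frac{(\tau-t)^\alpha}{(t-s)^\alpha(\tau-s)}\,\big(w(s)-w(0)\big)\,\rd s,
\end{displaymath}
an average of the values $w(s)-w(0)$ against a nonnegative kernel whose total mass on $[0,t]$ is at most one, whence the bound is immediate. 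With this lemma the paper obtains $M_x=(1+3R)E_\alpha(T^\alpha c_f)-1$ in part $i)$, and the same lemma applied to $\bar{w}=w-w^\prime$ supplies, with constant $2$, the bound you call $C\|w(\cdot)-w^\prime(\cdot)\|_{[0,t]}$ in part $iii)$, leading to $L_x=3E_\alpha(T^\alpha\lambda_f)$. Your parenthetical fallback of invoking a priori estimates from the cited literature concedes precisely this missing lemma; as written, part $i)$ (and with it the history bound reused in part $iii)$) is not proved.
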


The proof of this proposition is based on the following two lemmas.
\begin{lemma}\label{lemma_mean_value}
    For any $(t, w(\cdot)) \in G$, the inequality below holds:
    \begin{displaymath}
        \frac{1}{\Gamma(\alpha)} \Big\| \int_{0}^{t} \frac{(^C D^\alpha w)(\xi)}{(\tau - \xi)^{1 - \alpha}} \, \rd \xi \Big\|
        \leq \max_{\xi \in [0, t]} \|w(\xi) - w(0)\|,
        \quad \tau \in [t, T].
    \end{displaymath}
\end{lemma}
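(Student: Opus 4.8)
The plan is to rewrite the vector under the norm on the left-hand side as a weighted average of the increments $w(\xi) - w(0)$, $\xi \in [0, t]$, with nonnegative weights of total mass at most one; the asserted inequality then becomes the elementary estimate of an average by a maximum (which also accounts for the ``mean value'' flavour of the statement). To set up, put $\psi(\cdot) = (^C D^\alpha w)(\cdot) \in \Linf([0, t], \mathbb{R}^n)$ and $h(\xi) = w(\xi) - w(0)$, so that, by \cref{ID}, $h(\xi) = (I^\alpha \psi)(\xi) = \frac{1}{\Gamma(\alpha)} \int_0^\xi (\xi - s)^{\alpha - 1} \psi(s) \, \rd s$ for $\xi \in [0, t]$. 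Denote the vector in question by
\begin{displaymath}
    g(\tau) = \frac{1}{\Gamma(\alpha)} \int_0^t (\tau - \xi)^{\alpha - 1} \psi(\xi) \, \rd \xi, \quad \tau \in [t, T].
\end{displaymath}
For $\tau = t$ one has $g(t) = (I^\alpha \psi)(t) = h(t)$, so the claim is immediate; hence I would assume $\tau > t$.

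The core step is to produce a representation $g(\tau) = \int_0^t K(\tau, s) \, h(s) \, \rd s$ with an explicit \emph{scalar} kernel. The natural way to discover $K$ is to use $\psi(\xi) \, \rd \xi = \rd \big(I^{1 - \alpha} h\big)(\xi)$ (see \cref{int_D^alpha}), integrate by parts, insert the definition of $I^{1 - \alpha} h$, and interchange the order of integration; evaluating the resulting inner (Euler/Beta-type) integral collapses the expression to the surprisingly simple and manifestly nonnegative kernel
\begin{displaymath}
    K(\tau, s) = \frac{(\tau - t)^\alpha}{\Gamma(\alpha) \Gamma(1 - \alpha) \, (t - s)^\alpha \, (\tau - s)}, \quad s \in [0, t).
\end{displaymath}

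To avoid justifying the mildly singular integration by parts, I would instead verify this representation directly by Fubini's theorem, for which it suffices to check the kernel identity
\begin{displaymath}
    \int_\xi^t K(\tau, s) \, (s - \xi)^{\alpha - 1} \, \rd s = (\tau - \xi)^{\alpha - 1}, \quad \xi \in [0, t).
\end{displaymath}
Granting this, Tonelli's theorem gives finiteness of the double integral (it equals $\int_0^t (\tau - \xi)^{\alpha - 1} \|\psi(\xi)\| \, \rd \xi < \infty$), and then $\int_0^t K(\tau, s) h(s) \, \rd s = \frac{1}{\Gamma(\alpha)} \int_0^t (\tau - \xi)^{\alpha - 1} \psi(\xi) \, \rd \xi = g(\tau)$. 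The identity itself reduces, after the substitution $s = \xi + (t - \xi) v$, to the classical evaluation $\int_0^1 v^{\alpha - 1} (1 - v)^{-\alpha} (1 - \gamma v)^{-1} \, \rd v = \Gamma(\alpha) \Gamma(1 - \alpha) (1 - \gamma)^{-\alpha}$ with $\gamma = (t - \xi) / (\tau - \xi) \in (0, 1)$, a standard integral of Euler type.

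With the representation in hand, since $K(\tau, \cdot) \geq 0$ I would estimate $\|g(\tau)\| \leq \int_0^t K(\tau, s) \|h(s)\| \, \rd s \leq \big( \int_0^t K(\tau, s) \, \rd s \big) \max_{\xi \in [0, t]} \|h(\xi)\|$ and then show $\int_0^t K(\tau, s) \, \rd s \leq 1$. The latter follows by the substitution $r = t - s$ and enlarging the domain of integration:
\begin{displaymath}
    \int_0^t K(\tau, s) \, \rd s = \frac{(\tau - t)^\alpha}{\Gamma(\alpha) \Gamma(1 - \alpha)} \int_0^t \frac{r^{-\alpha}}{(\tau - t) + r} \, \rd r \leq \frac{(\tau - t)^\alpha}{\Gamma(\alpha) \Gamma(1 - \alpha)} \int_0^\infty \frac{r^{-\alpha}}{(\tau - t) + r} \, \rd r = 1,
\end{displaymath}
where the last integral equals $(\tau - t)^{-\alpha} \Gamma(\alpha) \Gamma(1 - \alpha)$ by the standard formula $\int_0^\infty r^{-\alpha} (a + r)^{-1} \, \rd r = a^{-\alpha} \pi / \sin(\pi \alpha)$ combined with $\Gamma(\alpha) \Gamma(1 - \alpha) = \pi / \sin(\pi \alpha)$. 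The main obstacle is the first step: guessing the closed form of $K$ and proving the kernel identity, i.e. recognizing the singular integral produced by the integration by parts as an Euler-type integral with an elementary value. Once the clean expression for $K$ is available, its nonnegativity, the mass bound, and the concluding estimate are all routine.
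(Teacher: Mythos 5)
Your proposal is correct and is essentially the paper's own argument: the paper proves this lemma by citing the scheme of Samko--Kilbas--Marichev (Theorem~14.10 and Corollary~1), which is exactly your construction --- representing the left-hand side as an integral of $w(\cdot)-w(0)$ against the nonnegative kernel $\frac{\sin(\pi\alpha)}{\pi}\,\frac{(\tau-t)^\alpha}{(t-s)^\alpha(\tau-s)}$ (your $K(\tau,s)$, since $1/(\Gamma(\alpha)\Gamma(1-\alpha))=\sin(\pi\alpha)/\pi$), followed by the mass bound $\int_0^t K(\tau,s)\,\rd s\leq 1$. You have merely supplied in full the details that the paper outsources to that reference, and all of your steps (the Euler-integral evaluation of the kernel identity, the Tonelli justification, and the concluding average-versus-maximum estimate) check out.
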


This lemma is proved by the scheme from \cite[Theorem~14.10 and Corollary~1]{Samko_Kilbas_Marichev_1993}.

\begin{lemma}[{\cite[Lemma~6.19]{Diethelm_2010}}] \label{lemma_Bellman_Gronwall}
    Let $t \in [0, T]$, $\theta \in [t, T]$, $\psi(\cdot) \in \C([t, \theta], \mathbb{R})$, $a \geq 0$, and $b \geq 0$ be such that
    \begin{displaymath}
        0 \leq
        \psi(\tau)
        \leq a + \frac{b}{\Gamma(\alpha)} \int_{t}^{\tau} \frac{\psi(\xi)}{(\tau - \xi)^{1 - \alpha}} \, \rd \xi,
        \quad \tau \in [t, \theta].
    \end{displaymath}
    Then, the following inequalities are valid:
    \begin{displaymath}
        \psi(\tau)
        \leq a E_\alpha((\tau - t)^\alpha b)
        \leq a E_\alpha(T^\alpha b),
        \quad \tau \in [t, \theta],
    \end{displaymath}
    where $E_\alpha$ is the Mittag-Leffler function of the order $\alpha$ {\rm(}see, e.g., {\rm \cite[(1.90)]{Samko_Kilbas_Marichev_1993})}.
\end{lemma}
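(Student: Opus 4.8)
The plan is to iterate the hypothesis using the positivity of the fractional integral, which turns the implicit bound into an explicit convergent series equal to the Mittag--Leffler function. To this end I introduce the linear operator
\[
    (B \phi)(\tau)
    = \frac{b}{\Gamma(\alpha)} \int_{t}^{\tau} \frac{\phi(\xi)}{(\tau - \xi)^{1 - \alpha}} \, \rd \xi,
    \quad \tau \in [t, \theta],
\]
acting on $\C([t, \theta], \mathbb{R})$; note that $B \phi$ is well defined and continuous (indeed H\"{o}lder) whenever $\phi$ is continuous, by the analogue of \cref{H_alpha} for the lower terminal $t$. With this notation the assumption reads $0 \leq \psi \leq a + B \psi$ on $[t, \theta]$, and the key structural fact is that $B$ is monotone: $\phi_1 \leq \phi_2$ implies $B \phi_1 \leq B \phi_2$, since both $b$ and the kernel $(\tau - \xi)^{\alpha - 1}$ are nonnegative. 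Applying $B$ to $\psi \leq a + B\psi$ and using monotonicity gives $\psi \leq a + Ba + B^2 \psi$, and repeating this $n$ times yields
\[
    \psi(\tau)
    \leq \sum_{k = 0}^{n - 1} (B^k a)(\tau) + (B^n \psi)(\tau),
    \quad \tau \in [t, \theta],
\]
where $B^k a$ is the $k$-fold application of $B$ to the constant function $a$.

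Next I would evaluate the iterated integrals explicitly. The basic computation is the power-law formula
\[
    \frac{1}{\Gamma(\alpha)} \int_{t}^{\tau} \frac{(\xi - t)^{\beta}}{(\tau - \xi)^{1 - \alpha}} \, \rd \xi
    = \frac{\Gamma(\beta + 1)}{\Gamma(\alpha + \beta + 1)} (\tau - t)^{\alpha + \beta},
    \quad \beta \geq 0,
\]
which follows from the substitution $\xi = t + (\tau - t) s$ and the Beta integral $\int_0^1 s^\beta (1 - s)^{\alpha - 1} \, \rd s = \Gamma(\beta + 1) \Gamma(\alpha) / \Gamma(\alpha + \beta + 1)$. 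Starting from $B^0 a = a$ and applying this formula with $\beta = \alpha k$ at each step, a straightforward induction gives
\[
    (B^k a)(\tau)
    = a \, \frac{\big((\tau - t)^\alpha b\big)^k}{\Gamma(\alpha k + 1)},
    \quad k \in \mathbb{N} \cup \{0\}.
\]
Hence the finite sum above is exactly the $n$-th partial sum of $a E_\alpha((\tau - t)^\alpha b)$, where $E_\alpha(z) = \sum_{k = 0}^{\infty} z^k / \Gamma(\alpha k + 1)$.

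Finally I would control the remainder and pass to the limit. Since $\psi$ is continuous on the compact interval $[t, \theta]$, it is bounded by some $C \geq 0$, and monotonicity together with the same power formula gives $0 \leq (B^n \psi)(\tau) \leq C (T^\alpha b)^n / \Gamma(\alpha n + 1)$; because the series defining $E_\alpha(T^\alpha b)$ converges, its general term, and hence $(B^n \psi)(\tau)$, tends to $0$ as $n \to \infty$, uniformly in $\tau$. Letting $n \to \infty$ in the iterated inequality therefore yields $\psi(\tau) \leq a E_\alpha((\tau - t)^\alpha b)$, and the second inequality follows from $0 \leq (\tau - t)^\alpha b \leq T^\alpha b$ together with the monotonicity of $E_\alpha$ on $[0, \infty)$, which is clear from the nonnegativity of its Taylor coefficients. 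The only real obstacle is the bookkeeping in the second step: establishing the power-law integral identity and carrying out the induction for $B^k a$ correctly; once this semigroup-type computation is in place, the monotone iteration and the vanishing of the remainder are routine.
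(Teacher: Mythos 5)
Your proof is correct, and it matches the paper's treatment in the only sense available: the paper does not prove this lemma itself but cites it as \cite[Lemma~6.19]{Diethelm_2010}, whose proof is precisely the monotone-iteration argument you give (iterate the inequality, evaluate the iterated fractional integrals of power functions via the Beta integral, and let the remainder term vanish using the convergence of the Mittag--Leffler series). All the steps check out, including the power-law identity, the induction for the iterates of the constant function, and the uniform bound on the remainder, so nothing is missing.
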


\begin{proof}[Proof of \cref{proposition_properties}]
    Let $R > 0$ be fixed.

    Let us prove part $i)$.
    Taking $c_f$ from \cref{c_f}, we define $M_x = (1 + 3 R) E_\alpha(T^\alpha c_f) - 1$.
    Let $(t, w(\cdot)) \in G$ such that $\|w(\cdot)\|_{[0, t]} \leq R$, $\theta \in [t, T]$, and $u(\cdot) \in \mU(t, \theta)$ be fixed, and let $x(\cdot) = x(\cdot \mid t, w(\cdot), \theta, u(\cdot))$ be the corresponding motion of system \cref{system}.
    Due to \cref{integral_equation,lemma_mean_value}, we derive
    \begin{multline*}
            \|x(\tau)\|
            \leq \|w(0)\|
            + \frac{1}{\Gamma(\alpha)} \Big\| \int_{0}^{t} \frac{( {}^C D^\alpha w) (\xi)}{(\tau - \xi)^{1 - \alpha}} \, \rd \xi \Big\|
            + \frac{1}{\Gamma(\alpha)} \Big\| \int_{t}^{\tau} \frac{f(\xi, x(\xi), u(\xi))}{(\tau - \xi)^{1 - \alpha}} \, \rd \xi \Big\| \\
            \leq \|w(0)\|
            + \max_{\xi \in [0, t]} \|w(\xi) - w(0)\|
            + \frac{c_f}{\Gamma(\alpha)} \int_{t}^{\tau} \frac{1 + \|x(\xi)\|}{(\tau - \xi)^{1 - \alpha}} \, \rd \xi,
            \quad \tau \in [t, \theta].
    \end{multline*}
    Hence,
    \begin{displaymath}
        1 + \|x(\tau)\|
        \leq 1 + 3 R + \frac{c_f}{\Gamma(\alpha)} \int_{t}^{\tau} \frac{1 + \|x(\xi)\|}{(\tau - \xi)^{1 - \alpha}} \, \rd \xi,
        \quad \tau \in [t, \theta],
    \end{displaymath}
    wherefrom, applying \cref{lemma_Bellman_Gronwall}, we obtain $\|x(\tau)\| \leq M_x$ for $\tau \in [t, \theta]$.
    Since $M_x \geq R$, then this estimate also holds for $\tau \in [0, t)$.
    Thus, $\|x(\cdot)\|_{[0, \theta]} \leq M_x$.

    Further, we prove part $ii)$.
    Let $H_x = H_\alpha \max\{R, (1 + M_x) c_f\}$, where $H_\alpha$ is taken from \cref{H_alpha}.
    Let $(t, w(\cdot)) \in G$ such that $\|w(\cdot)\|_{[0, t]} \leq R$ and $\|(^C D^\alpha w)(\cdot)\|_{[0, t]} \leq R$, $\theta \in [t, T]$, and $u(\cdot) \in \mU(t, \theta)$ be fixed, and let $x(\cdot) = x(\cdot \mid t, w(\cdot), \theta, u(\cdot))$.
    We have
    \begin{displaymath}
        \|(^C D^\alpha x)(\tau)\|
        = \|(^C D^\alpha w)(\tau)\|
        \leq R
        \text{ for a.e. } \tau \in [0, t].
    \end{displaymath}
    Moreover, due to \cref{system,c_f}, we obtain
    \begin{displaymath}
        \|(^C D^\alpha x)(\tau)\|
        = \|f(\tau, x(\tau), u(\tau))\|
        \leq (1 + \|x(\tau)\|) c_f
        \leq (1 + M_x) c_f
        \text{ for a.e. } \tau \in [t, \theta].
    \end{displaymath}
    Then, $\|(^C D^\alpha x)(\cdot)\|_{[0, \theta]} \leq \max\{R, (1 + M_x) c_f\}$, and \cref{H} follows from \cref{ID,H_alpha}.

    Finally, let us prove part $iii)$.
    According to local Lipschitz continuity of $f$ in $x$ (see \cref{Assumption_f}), let us choose $\lambda_f$ by $M_x$ and define $L_x = 3 E_\alpha(T^\alpha \lambda_f)$.
    Let $(t, w(\cdot)),$ $(t, w^\prime(\cdot)) \in G$ such that $\|w(\cdot)\|_{[0, t]} \leq R$ and $\|w^\prime(\cdot)\|_{[0, t]} \leq R$, $\theta \in [t, T]$, and $u(\cdot) \in \mU(t, \theta)$ be fixed.
    Let us consider the motions $x(\cdot) = x(\cdot \mid t, w(\cdot), \theta, u(\cdot))$ and $x^\prime(\cdot) = x(\cdot \mid t, w^\prime(\cdot), \theta, u(\cdot))$ of system \cref{system}.
    Due to \cref{integral_equation}, we derive
    \begin{multline*}
        \|x(\tau) - x^\prime(\tau)\|
        \leq \|w(0) - w^\prime(0)\|
        + \frac{1}{\Gamma(\alpha)} \Big\| \int_{0}^{t} \frac{({}^C D^\alpha w)(\xi) - ({}^C D^\alpha w^\prime)(\xi)}{(\tau - \xi)^{1 - \alpha}} \, \rd \xi \Big\| \\
        + \frac{1}{\Gamma(\alpha)} \Big\| \int_{t}^{\tau} \frac{f(\xi, x(\xi), u(\xi)) - f(\xi, x^\prime(\xi), u(\xi))}{(\tau - \xi)^{1 - \alpha}} \, \rd \xi \Big\|,
        \quad \tau \in [t, \theta].
    \end{multline*}
    Let us denote $\bar{w}(\tau) = w(\tau) - w^\prime(\tau)$, $\tau \in [0, t]$.
    Then, $\bar{w}(\cdot) \in \AC([0, t], \mathbb{R}^n)$ and, consequently, $(t, \bar{w}(\cdot)) \in G$.
    Moreover, in accordance with \cref{Caputo_derivative}, we have $({}^C D^\alpha \bar{w})(\tau) = ({}^C D^\alpha w)(\tau) - ({}^C D^\alpha w^\prime)(\tau)$ for a.e. $\tau \in [0, t]$.
    Therefore, applying \cref{lemma_mean_value}, we get
    \begin{displaymath}
        \frac{1}{\Gamma(\alpha)} \Big\| \int_{0}^{t} \frac{({}^C D^\alpha w)(\xi) - ({}^C D^\alpha w^\prime)(\xi)}{(\tau - \xi)^{1 - \alpha}} \, \rd \xi \Big\|
        \leq \max_{\xi \in [0, t]} \|\bar{w}(\xi) - \bar{w}(0) \|
        \leq 2 \|w(\cdot) - w^\prime(\cdot)\|_{[0, t]},
    \end{displaymath}
    where $\tau \in [t, \theta]$.
    Hence,
    \begin{displaymath}
        \|x(\tau) - x^\prime(\tau)\|
        \leq 3 \|w(\cdot) - w^\prime(\cdot)\|_{[0, t]}
        + \frac{\lambda_f}{\Gamma(\alpha)} \int_{t}^{\tau} \frac{\|x(\xi) - x^\prime(\xi)\|}{(\tau - \xi)^{1 - \alpha}} \, \rd \xi,
        \quad \tau \in [t, \theta],
    \end{displaymath}
    wherefrom, by \cref{lemma_Bellman_Gronwall}, we obtain $\|x(\tau) - x^\prime(\tau)\| \leq L_x \|w(\cdot) - w^\prime(\cdot)\|_{[0, t]}$ for $\tau \in [t, \theta]$.
    Since $L_x \geq 1$, then this estimate also holds for $\tau \in [0, t)$, and \cref{L} is proved.
\end{proof}

Let us emphasize that, in part $ii)$ of \cref{proposition_properties}, the H\"{o}lder constant $H_x$ of the motion $x(\cdot)$ depends not only on the estimate of the history $w(\cdot)$, but also on the estimate of its derivative $(^C D^\alpha w)(\cdot)$.

\section{Continuity of the value functional}
\label{section_continuity}

First, let us prove the uniform continuity of the value functional $\Val: G \rightarrow \mathbb{R}$ (see \cref{Val}) in the functional argument $w(\cdot)$.
\begin{lemma} \label{lemma_continuity_fixed_t}
    For any $R > 0$ and $\varepsilon > 0$, there exists $\delta > 0$ such that, for any $(t, w(\cdot))$, $(t, w^\prime(\cdot)) \in G$, if $\|w(\cdot)\|_{[0, t]} \leq R$ and $\|w(\cdot) - w^\prime(\cdot)\|_{[0, t]} \leq \delta$, then
    \begin{equation} \label{lemma_continuity_fixed_t_main}
        |\Val(t, w(\cdot)) - \Val(t, w^\prime(\cdot))|
        \leq \varepsilon.
    \end{equation}
\end{lemma}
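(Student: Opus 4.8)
The plan is to reduce the statement to a uniform estimate on the cost functional and then pass to the infimum. Since $\Val(t, w(\cdot)) = \inf_{u(\cdot) \in \mU(t, T)} J(t, w(\cdot), u(\cdot))$ by \cref{Val}, if I can show that
\begin{equation*}
    |J(t, w(\cdot), u(\cdot)) - J(t, w^\prime(\cdot), u(\cdot))| \leq \varepsilon
    \quad \text{for every } u(\cdot) \in \mU(t, T),
\end{equation*}
with a bound independent of the control, then the elementary inequality $|\inf_u a_u - \inf_u b_u| \leq \sup_u |a_u - b_u|$ immediately yields \cref{lemma_continuity_fixed_t_main}. Thus the entire argument amounts to comparing the two cost functionals evaluated along the motions emanating from $w(\cdot)$ and $w^\prime(\cdot)$ under the \emph{same} control.

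Next I would confine the relevant motions to a fixed compact set. Assuming $\delta \leq 1$, the bound $\|w(\cdot)\|_{[0, t]} \leq R$ together with $\|w(\cdot) - w^\prime(\cdot)\|_{[0, t]} \leq \delta$ gives $\|w^\prime(\cdot)\|_{[0, t]} \leq R + 1 =: R^\prime$. Applying \cref{proposition_properties} with the radius $R^\prime$, I obtain constants $M_x$ and $L_x$ such that the motions $x(\cdot) = x(\cdot \mid t, w(\cdot), T, u(\cdot))$ and $x^\prime(\cdot) = x(\cdot \mid t, w^\prime(\cdot), T, u(\cdot))$ both satisfy $\|x(\cdot)\|_{[0, T]} \leq M_x$ and $\|x^\prime(\cdot)\|_{[0, T]} \leq M_x$ (part $i)$), and, by \cref{L} (part $iii)$),
\begin{equation*}
    \|x(\tau) - x^\prime(\tau)\|
    \leq L_x \|w(\cdot) - w^\prime(\cdot)\|_{[0, t]}
    \leq L_x \delta,
    \quad \tau \in [0, T].
\end{equation*}
Crucially, both $M_x$ and $L_x$ depend only on $R^\prime$ and not on $u(\cdot)$, which is exactly what secures the uniformity in the control.

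Finally I would exploit the continuity of the data from \cref{Assumption_sigma_chi}. The function $\sigma$ is uniformly continuous on the compact ball $B(M_x)$, and $\chi$ is uniformly continuous on the compact set $[0, T] \times B(M_x) \times P$. Hence there is $\eta > 0$ such that $\|x - x^\prime\| \leq \eta$ forces $|\sigma(x) - \sigma(x^\prime)| \leq \varepsilon / 2$ and $|\chi(\tau, x, u) - \chi(\tau, x^\prime, u)| \leq \varepsilon / (2 T)$ for all $\tau \in [0, T]$ and $u \in P$. Splitting, via \cref{cost_functional_general},
\begin{equation*}
    |J(t, w(\cdot), u(\cdot)) - J(t, w^\prime(\cdot), u(\cdot))|
    \leq |\sigma(x(T)) - \sigma(x^\prime(T))|
    + \int_{t}^{T} |\chi(\tau, x(\tau), u(\tau)) - \chi(\tau, x^\prime(\tau), u(\tau))| \, \rd \tau,
\end{equation*}
and choosing $\delta \leq \min\{1, \eta / L_x\}$ so that $\|x(\tau) - x^\prime(\tau)\| \leq \eta$ for all $\tau$, each summand is bounded by $\varepsilon / 2$, giving the desired uniform bound. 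The degenerate case $t = T$ is immediate, since then $\Val(T, \cdot) = \sigma(\cdot(T))$ by \cref{Val_T} and continuity follows directly from that of $\sigma$.

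I do not anticipate a genuine obstacle here: the only point requiring care is to fix the compact set and the constants $M_x, L_x$ \emph{before} choosing $\delta$, and to ensure these constants are independent of the control, so that the per-control estimate transfers cleanly to the infimum. The Lipschitz-in-initial-data estimate \cref{L} of \cref{proposition_properties} does all the heavy lifting.
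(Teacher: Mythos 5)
Your proposal is correct and follows essentially the same route as the paper's own proof: fix $R^\prime = R+1$, invoke parts $i)$ and $iii)$ of \cref{proposition_properties} to get control-independent constants $M_x$ and $L_x$, use uniform continuity of $\sigma$ and $\chi$ on the resulting compact sets to bound $|J(t, w(\cdot), u(\cdot)) - J(t, w^\prime(\cdot), u(\cdot))|$ by $\varepsilon$ uniformly in $u(\cdot)$, and pass to the infimum. The only cosmetic differences are that you introduce the modulus $\eta$ explicitly and then set $\delta \leq \min\{1, \eta / L_x\}$, whereas the paper folds this into the choice of $\delta$ directly, and that you single out the trivial case $t = T$, which the paper's argument covers without comment.
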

\begin{proof}
    Let $R > 0$ and $\varepsilon > 0$ be fixed.
    By $R^\prime = R + 1$, let us choose $M_x^\prime$ and $L_x^\prime$ according to \cref{proposition_properties}.
    Due to continuity of $\sigma$ and $\chi$ (see \cref{Assumption_sigma_chi}), there exists $\delta \in (0, 1]$ such that, for any $x,$ $x^\prime \in B(M_x^\prime)$ satisfying $\|x - x^\prime\| \leq L_x^\prime \delta$, any $\tau \in [0, T]$, and any $u \in P$, the following inequalities are valid:
    \begin{displaymath}
        |\sigma(x) - \sigma(x^\prime)|
        \leq \varepsilon / 2,
        \quad
        |\chi(\tau, x, u) - \chi(\tau, x^\prime, u)|
        \leq \varepsilon / (2 T).
    \end{displaymath}
    Let us show that the statement of the lemma is valid for this $\delta$.

    Let $(t, w(\cdot)),$ $(t, w^\prime(\cdot)) \in G$ be fixed, and let $\|w(\cdot)\|_{[0, t]} \leq R$, $\|w(\cdot) - w^\prime(\cdot)\|_{[0, t]} \leq \delta$.
    Let us note that $\|w^\prime(\cdot)\|_{[0, t]} \leq R^\prime$.
    For every $u(\cdot) \in \mU(t, T)$, we consider the motions $x(\cdot) = x(\cdot \mid t, w(\cdot), T, u(\cdot))$ and $x^\prime(\cdot) = x(\cdot \mid t, w^\prime(\cdot), T, u(\cdot))$ of system \cref{system}.
    We have $\|x(\cdot)\|_{[0, T]} \leq M_x^\prime$, $\|x^\prime(\cdot)\|_{[0, T]} \leq M_x^\prime$, and $\|x(\cdot) - x^\prime(\cdot)\|_{[0, T]} \leq L_x^\prime \delta$.
    Hence,
    \begin{multline*}
        |J(t, w(\cdot), u(\cdot)) - J(t, w^\prime(\cdot), u(\cdot))| \\
        \leq |\sigma(x(T)) - \sigma(x^\prime(T))|
        + \int_{t}^{T} |\chi(\tau, x(\tau), u(\tau)) - \chi(\tau, x^\prime(\tau), u(\tau))| \, \rd \tau
        \leq \varepsilon.
    \end{multline*}
    Since this estimate is valid for every $u(\cdot) \in \mU(t, T)$, then, in accordance with \cref{Val}, we conclude \cref{lemma_continuity_fixed_t_main}.
    The lemma is proved.
\end{proof}

In order to study the continuity of $\Val$ with respect to both variables $t$ and $w(\cdot)$, the set $G$ is endowed with the following metric (see, e.g., \cite[p.~25]{Lukoyanov_2011_Eng} and also \cite{Lukoyanov_2003}):
\begin{multline} \label{dist}
    \dist \big((t, w(\cdot)), (t^\prime, w^\prime(\cdot))\big) \\
    = \max \Big\{ \dist^\ast\big((t, w(\cdot)), (t^\prime, w^\prime(\cdot))\big),
    \dist^\ast\big((t^\prime, w^\prime(\cdot)), (t, w(\cdot))\big) \Big\},
\end{multline}
where $(t, w(\cdot))$, $(t^\prime, w^\prime(\cdot)) \in G$,
\begin{displaymath}
    \dist^\ast\big((t, w(\cdot)), (t^\prime, w^\prime(\cdot))\big)
    = \max_{\tau \in [0, t]} \min_{\tau^\prime \in [0, t^\prime]}
    \big( |\tau - \tau^\prime|^2 + \|w(\tau) - w^\prime(\tau^\prime)\|^2 \big)^{1/2},
\end{displaymath}
and $\dist^\ast((t^\prime, w^\prime(\cdot)), (t, w(\cdot)))$ is defined in a similar way with clear changes.
Let us note that this metric is a Hausdorff distance between the graphics of the functions $w: [0, t] \rightarrow \mathbb{R}^n$ and $w^\prime: [0, t^\prime] \rightarrow \mathbb{R}^n$.

The proposition below holds (see also \cite[(P2.12)]{Lukoyanov_2011_Eng}).
\begin{proposition} \label{proposition_dist}
    For any $(t, w(\cdot))$, $(t^\prime, w^\prime(\cdot)) \in G$, $t^\prime \leq t$, the inequalities
    \begin{align}
        & \dist
        \leq t - t^\prime + \varkappa(t - t^\prime) + \|w_{t^\prime}(\cdot) - w^\prime(\cdot)\|_{[0, t^\prime]}, \label{proposition_dist_1} \\
        & t - t^\prime
        \leq \dist,
        \quad \|w_{t^\prime}(\cdot) - w^\prime(\cdot)\|_{[0, t^\prime]}
        \leq \dist + \varkappa(\dist) \label{proposition_dist_2}
    \end{align}
    are valid, where $\dist = \dist((t, w(\cdot)), (t^\prime, w^\prime(\cdot)))$, $\varkappa$ is the modulus of continuity of $w(\cdot)$, and $w_{t^\prime}(\tau) = w(\tau)$, $\tau \in [0, t^\prime]$.
\end{proposition}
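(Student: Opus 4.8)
The plan is to prove the two bounds in \cref{proposition_dist} directly from the definition of the Hausdorff-type metric \cref{dist}, exploiting the fact that one function ($w^\prime$) is defined on the shorter interval $[0, t^\prime]$ and the other ($w$) extends it to $[0, t]$. The key observation is that the restriction $w_{t^\prime}(\cdot)$ of $w(\cdot)$ to $[0, t^\prime]$ has the \emph{same} domain as $w^\prime(\cdot)$, so for points in $[0, t^\prime]$ I can compare the two functions pointwise, while for points in $(t^\prime, t]$ I must use the modulus of continuity $\varkappa$ of $w(\cdot)$ to control the gap created by projecting onto the shorter interval.

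First I would establish \cref{proposition_dist_1}. Since $t^\prime \leq t$, I estimate each of the two one-sided quantities $\dist^\ast$ separately. For $\dist^\ast((t^\prime, w^\prime(\cdot)), (t, w(\cdot)))$, any $\tau^\prime \in [0, t^\prime]$ can be matched with the same $\tau^\prime \in [0, t]$, giving the inner minimum at most $\|w^\prime(\tau^\prime) - w(\tau^\prime)\| = \|w^\prime(\tau^\prime) - w_{t^\prime}(\tau^\prime)\|$, whence this term is bounded by $\|w_{t^\prime}(\cdot) - w^\prime(\cdot)\|_{[0, t^\prime]}$. For the reverse quantity $\dist^\ast((t, w(\cdot)), (t^\prime, w^\prime(\cdot)))$, I split the outer maximum: for $\tau \in [0, t^\prime]$ I again match $\tau$ with itself; for $\tau \in (t^\prime, t]$ I match it with $t^\prime$, so that the inner expression is at most $(|\tau - t^\prime|^2 + \|w(\tau) - w^\prime(t^\prime)\|^2)^{1/2}$. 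I bound $|\tau - t^\prime| \leq t - t^\prime$ and, using the triangle inequality and the modulus of continuity, $\|w(\tau) - w^\prime(t^\prime)\| \leq \|w(\tau) - w(t^\prime)\| + \|w(t^\prime) - w^\prime(t^\prime)\| \leq \varkappa(t - t^\prime) + \|w_{t^\prime}(\cdot) - w^\prime(\cdot)\|_{[0, t^\prime]}$. Combining the pieces (and using subadditivity of the square root to split the $\ell^2$-norm into a sum) yields \cref{proposition_dist_1}.

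Next I would prove \cref{proposition_dist_2}. For the bound $t - t^\prime \leq \dist$, I take $\tau = t$ in the outer maximum defining $\dist^\ast((t, w(\cdot)), (t^\prime, w^\prime(\cdot)))$; then for \emph{every} $\tau^\prime \in [0, t^\prime]$ one has $|t - \tau^\prime| \geq t - t^\prime$, so the inner minimum is at least $t - t^\prime$, and hence $\dist \geq t - t^\prime$. For the estimate on $\|w_{t^\prime}(\cdot) - w^\prime(\cdot)\|_{[0, t^\prime]}$, I fix $\tau^\prime \in [0, t^\prime]$ and use that $\dist^\ast((t^\prime, w^\prime(\cdot)), (t, w(\cdot))) \leq \dist$ guarantees a point $\tau \in [0, t]$ with $(|\tau - \tau^\prime|^2 + \|w^\prime(\tau^\prime) - w(\tau)\|^2)^{1/2} \leq \dist$; in particular $|\tau - \tau^\prime| \leq \dist$ and $\|w^\prime(\tau^\prime) - w(\tau)\| \leq \dist$. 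Then $\|w^\prime(\tau^\prime) - w(\tau^\prime)\| \leq \|w^\prime(\tau^\prime) - w(\tau)\| + \|w(\tau) - w(\tau^\prime)\| \leq \dist + \varkappa(|\tau - \tau^\prime|) \leq \dist + \varkappa(\dist)$, using monotonicity of $\varkappa$. Taking the maximum over $\tau^\prime$ gives the claim.

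The main obstacle, though a mild one, is the bookkeeping in the first bound: one must carefully handle the asymmetry of the metric, treating the two $\dist^\ast$ terms separately and, within the harder one, splitting the domain of the outer maximum at $t^\prime$ so that the modulus of continuity $\varkappa$ is invoked only on the portion $(t^\prime, t]$ where $w$ has no counterpart in the domain of $w^\prime$. Once the matching strategy (identity map on $[0, t^\prime]$, collapse to the endpoint $t^\prime$ beyond it) is fixed, everything reduces to triangle inequalities and the definition of $\varkappa$; there is no analytic difficulty involving the fractional structure here, since the statement concerns only the geometry of the graphs.
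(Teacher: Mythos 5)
Your proposal is correct and follows essentially the same route as the paper: the same matching strategy (identity map on $[0, t^\prime]$, collapse to $t^\prime$ beyond it) for \cref{proposition_dist_1}, the choice $\tau = t$ for the first bound in \cref{proposition_dist_2}, and the same triangle-inequality-plus-modulus argument for the second bound, differing only in that you argue for an arbitrary $\tau^\prime \in [0, t^\prime]$ and take the maximum at the end, whereas the paper fixes the maximizing point $\tau^\prime_0$ first --- an immaterial variation.
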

\begin{proof}
    Let $(t, w(\cdot))$, $(t^\prime, w^\prime(\cdot)) \in G$ such that $t^\prime \leq t$ be fixed.

    For every $\tau \in [0, t]$, if $\tau \leq t^\prime$, then
    \begin{displaymath}
        \min_{\tau^\prime \in [0, t^\prime]}
        \big( |\tau - \tau^\prime|^2 + \|w(\tau) - w^\prime(\tau^\prime)\|^2 \big)^{1/2}
        \leq \|w(\tau) - w^\prime(\tau)\|
        \leq \|w_{t^\prime}(\cdot) - w^\prime(\cdot)\|_{[0, t^\prime]},
    \end{displaymath}
    and, if $\tau > t^\prime$, then
    \begin{multline*}
        \min_{\tau^\prime \in [0, t^\prime]}
        \big( |\tau - \tau^\prime|^2 + \|w(\tau) - w^\prime(\tau^\prime)\|^2 \big)^{1/2}
        \leq \tau - t^\prime + \|w(\tau) - w^\prime(t^\prime)\| \\
        \leq t - t^\prime + \varkappa(t - t^\prime) + \|w_{t^\prime}(\cdot) - w^\prime(\cdot)\|_{[0, t^\prime]}.
    \end{multline*}
    Therefore, we obtain
    \begin{displaymath}
        \dist^\ast\big((t, w(\cdot)), (t^\prime, w^\prime(\cdot))\big)
        \leq t - t^\prime + \varkappa(t - t^\prime) + \|w_{t^\prime}(\cdot) - w^\prime(\cdot)\|_{[0, t^\prime]}.
    \end{displaymath}
    On the other hand, for every $\tau^\prime \in [0, t^\prime]$, we have
    \begin{displaymath}
        \min_{\tau \in [0, t]}
        \big( |\tau - \tau^\prime|^2 + \|w(\tau) - w^\prime(\tau^\prime)\|^2 \big)^{1/2}
        \leq \|w(\tau^\prime) - w^\prime(\tau^\prime)\|
        \leq \|w_{t^\prime}(\cdot) - w^\prime(\cdot)\|_{[0, t^\prime]}.
    \end{displaymath}
    Thus,
    \begin{displaymath}
        \dist^\ast\big((t^\prime, w^\prime(\cdot)), (t, w(\cdot))\big)
        \leq \|w_{t^\prime}(\cdot) - w^\prime(\cdot)\|_{[0, t^\prime]},
    \end{displaymath}
    and we conclude \cref{proposition_dist_1}.

    Further, the first inequality in \cref{proposition_dist_2} follows from the estimates
    \begin{displaymath}
        \dist
        \geq \dist^\ast\big((t, w(\cdot)), (t^\prime, w^\prime(\cdot))\big)
        \geq \min_{\tau^\prime \in [0, t^\prime]}
        \big( |t - \tau^\prime|^2 + \|w(t) - w^\prime(\tau^\prime)\|^2 \big)^{1/2}
        \geq t - t^\prime.
    \end{displaymath}

    Let us prove the second inequality in \cref{proposition_dist_2}.
    Let us choose $\tau^\prime_0 \in [0, t^\prime]$ from the condition $\|w_{t^\prime}(\cdot) - w^\prime(\cdot)\|_{[0, t^\prime]} = \|w(\tau^\prime_0) - w^\prime(\tau^\prime_0)\|$ and take $\tau_0 \in [0, t]$ such that
    \begin{displaymath}
        \min_{\tau \in [0, t]}
        \big( |\tau - \tau^\prime_0|^2 + \|w(\tau) - w^\prime(\tau^\prime_0)\|^2 \big)
        = |\tau_0 - \tau^\prime_0|^2 + \|w(\tau_0) - w^\prime(\tau^\prime_0)\|^2.
    \end{displaymath}
    Hence, since
    \begin{displaymath}
        \dist
        \geq \dist^\ast\big((t^\prime, w^\prime(\cdot)), (t, w(\cdot))\big)
        \geq \big( |\tau_0 - \tau^\prime_0|^2 + \|w(\tau_0) - w^\prime(\tau^\prime_0)\|^2 \big)^{1/2},
    \end{displaymath}
    then $|\tau_0 - \tau^\prime_0| \leq \dist$, $\|w(\tau_0) - w^\prime(\tau^\prime_0)\| \leq \dist$, and, consequently,
    \begin{displaymath}
        \|w(\tau^\prime_0) - w^\prime(\tau^\prime_0)\|
        \leq \|w(\tau_0) - w^\prime(\tau^\prime_0)\| + \|w(\tau^\prime_0) - w(\tau_0)\|
        \leq \dist + \varkappa(\dist).
    \end{displaymath}
    Therefore, due to the choice of $\tau^\prime_0$, we get the second inequality in \cref{proposition_dist_2}.
\end{proof}

Now, let us prove that the value functional $\Val: G \rightarrow \mathbb{R}$ (see \cref{Val}) is continuous.
\begin{theorem} \label{theorem_continuity}
    For any $(t, w(\cdot)) \in G$ and $\varepsilon > 0$, there exists $\delta > 0$ such that, for any $(t^\prime, w^\prime(\cdot)) \in G$, from the inequality $\dist((t, w(\cdot)), (t^\prime, w^\prime(\cdot))) \leq \delta$, it follows that
    \begin{equation} \label{theorem_continuity_main}
        |\Val(t, w(\cdot)) - \Val(t^\prime, w^\prime(\cdot))|
        \leq \varepsilon.
    \end{equation}
\end{theorem}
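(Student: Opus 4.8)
The plan is to combine a same-time comparison, handled by \cref{lemma_continuity_fixed_t}, with a time shift, handled by the dynamic programming principle \cref{Theorem_dynamic_programming_principle}, after translating closeness in the metric \cref{dist} into elementary estimates by means of \cref{proposition_dist}. I would fix $(t, w(\cdot)) \in G$ and $\varepsilon > 0$, set $R = \|w(\cdot)\|_{[0, t]} + 1$, and let $\varkappa$ be the modulus of continuity of the \emph{fixed} function $w(\cdot)$. The decisive structural choice, explained at the end, is to launch the dynamic programming principle only from positions whose history is (a restriction of) the fixed $w(\cdot)$.

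Suppose first that $t^\prime \leq t$. I would insert the intermediate position $(t^\prime, w_{t^\prime}(\cdot))$, the restriction of $w(\cdot)$ to $[0, t^\prime]$, and split
\[
|\Val(t, w(\cdot)) - \Val(t^\prime, w^\prime(\cdot))|
\leq |\Val(t, w(\cdot)) - \Val(t^\prime, w_{t^\prime}(\cdot))|
+ |\Val(t^\prime, w_{t^\prime}(\cdot)) - \Val(t^\prime, w^\prime(\cdot))|.
\]
The second term is handled by \cref{lemma_continuity_fixed_t} at time $t^\prime$: indeed $\|w_{t^\prime}(\cdot)\|_{[0, t^\prime]} \leq R$, and by the second estimate in \cref{proposition_dist_2} one has $\|w_{t^\prime}(\cdot) - w^\prime(\cdot)\|_{[0, t^\prime]} \leq \dist + \varkappa(\dist)$, small with $\dist$. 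For the first term I would apply \cref{Theorem_dynamic_programming_principle} to $(t^\prime, w_{t^\prime}(\cdot))$ with $\theta = t$, writing $\Val(t^\prime, w_{t^\prime}(\cdot)) = \inf_{u(\cdot)} ( \Val(t, x_t(\cdot)) + \int_{t^\prime}^{t} \chi(\tau, x(\tau), u(\tau)) \, \rd \tau )$ for the motion $x(\cdot)$ started from $(t^\prime, w_{t^\prime}(\cdot))$. The cost integral is bounded by $(t - t^\prime) \leq \dist$ times the maximum of $|\chi|$ over the relevant compact set (finite by \cref{Assumption_sigma_chi} and compactness of $P$). Since $x_t(\cdot)$ coincides with $w(\cdot)$ on $[0, t^\prime]$ and, on $[t^\prime, t]$, departs from $x_t(t^\prime) = w(t^\prime)$ by at most $H_x (t - t^\prime)^\alpha$ while $w(\cdot)$ itself moves by at most $\varkappa(t - t^\prime)$, I obtain $\|x_t(\cdot) - w(\cdot)\|_{[0, t]} \leq H_x (t - t^\prime)^\alpha + \varkappa(t - t^\prime)$ uniformly in $u(\cdot)$; then \cref{lemma_continuity_fixed_t} at time $t$ makes every $\Val(t, x_t(\cdot))$ close to the constant $\Val(t, w(\cdot))$.

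For the case $t < t^\prime$ the restriction trick would reintroduce an uncontrolled term, so I would instead apply \cref{Theorem_dynamic_programming_principle} directly to the fixed position $(t, w(\cdot))$ with $\theta = t^\prime$, giving $\Val(t, w(\cdot)) = \inf_{u(\cdot)} ( \Val(t^\prime, x_{t^\prime}(\cdot)) + \int_{t}^{t^\prime} \chi(\tau, x(\tau), u(\tau)) \, \rd \tau )$ with $x(\cdot)$ started from $(t, w(\cdot))$. The integral is again $O(\dist)$, and I would bound $\|x_{t^\prime}(\cdot) - w^\prime(\cdot)\|_{[0, t^\prime]}$ uniformly in $u(\cdot)$: on $[0, t]$ the motion equals $w(\cdot)$, so the difference is at most $\dist + \varkappa(\dist)$ by \cref{proposition_dist}; on $[t, t^\prime]$ the motion stays within $H_x (t^\prime - t)^\alpha$ of $w(t)$, while $\|w(t) - w^\prime(\tau)\|$ is controlled directly through the definition of $\dist$ (for $\tau \in [t, t^\prime]$ one chooses a near point in $[0, t]$ and invokes the modulus $\varkappa$ of the fixed $w(\cdot)$) by $\dist + \varkappa(2\dist)$. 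Thus $\|x_{t^\prime}(\cdot) - w^\prime(\cdot)\|_{[0, t^\prime]} \to 0$ as $\dist \to 0$, and \cref{lemma_continuity_fixed_t} at time $t^\prime$ finishes the case. Collecting both cases and choosing $\delta$ small enough yields \cref{theorem_continuity_main}.

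The step I expect to be the main obstacle is the uniform-in-$u(\cdot)$ control of the oscillation of the newly generated part of the motion over the short interval between $t$ and $t^\prime$. The natural tool is the H\"older estimate \cref{H} from part $ii)$ of \cref{proposition_properties}, but its constant $H_x$ depends on a bound for $\|(^C D^\alpha \cdot)(\cdot)\|$ of the \emph{initial history}, and no such bound is available for the variable history $w^\prime(\cdot)$ in terms of $\dist$: the metric controls $w^\prime(\cdot)$ uniformly but says nothing about its Caputo derivative. This is exactly why the dynamic programming principle must always be launched from the fixed history $w(\cdot)$ (or its restriction), whose Caputo derivative is a fixed $\Linf$ function, so that $H_x$ is a genuine constant and \cref{H} applies; the variable position $w^\prime(\cdot)$ is then allowed to enter only through \cref{lemma_continuity_fixed_t}, which needs merely the uniform closeness supplied by \cref{proposition_dist}.
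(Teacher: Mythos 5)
Your proposal follows essentially the same route as the paper's proof: the same case split $t^\prime \leq t$ versus $t < t^\prime$, the same-time comparison via \cref{lemma_continuity_fixed_t}, the time shift handled by \cref{Theorem_dynamic_programming_principle} launched always from the fixed history $w(\cdot)$ or its restriction (precisely so that the H\"{o}lder constant $H_x$ of \cref{proposition_properties} is available — the same structural point the paper relies on), and \cref{proposition_dist} to convert closeness in $\dist$ into uniform-norm closeness. The only noteworthy deviation is in the case $t < t^\prime$: the paper bounds $\dist((t^\prime, x(\cdot)), (t^\prime, w^\prime(\cdot)))$ by a triangle inequality through $(t, w(\cdot))$ and then applies \cref{proposition_dist_2} with the H\"{o}lder modulus of the motion $x(\cdot)$, whereas your direct pointwise bound of $\|w(\cdot) - w^\prime(\cdot)\|$ on $[0, t]$ is not literally covered by \cref{proposition_dist_2} (as stated it would invoke the modulus of the longer-domain, uncontrolled function $w^\prime(\cdot)$), though it does hold with the modulus of the fixed $w(\cdot)$ by the same one-line argument applied to the other one-sided distance $\dist^\ast((t^\prime, w^\prime(\cdot)), (t, w(\cdot)))$.
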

\begin{proof}
    Let $(t, w(\cdot)) \in G$ and $\varepsilon > 0$ be fixed.
    Let $R > 0$ be such that $\|w(\cdot)\|_{[0, t]} \leq R$ and $\|(^C D^\alpha w)(\cdot)\|_{[0, t]} \leq R$, and let $M_x$ and $H_x$ be chosen according to \cref{proposition_properties}.
    Due to continuity of $\chi$ (see \cref{Assumption_sigma_chi}), there exists $M_\chi > 0$ such that
    \begin{equation} \label{M_chi}
        |\chi(\tau, x, u)|
        \leq M_\chi,
        \quad \tau \in [0, T], \quad x \in B(M_x), \quad u \in P.
    \end{equation}
    Let $\delta_1 > 0$ be such that $M_\chi \delta_1 \leq \varepsilon / 6$.
    Further, applying \cref{lemma_continuity_fixed_t} for $M_x$ and $\varepsilon / 3$, let us choose $\delta_2$.
    Finally, taking $c_f$ from \cref{c_f}, we consider $\delta_3 > 0$ such that
    \begin{displaymath}
        \big((1 + M_x) c_f + R\big) \delta_3^\alpha / \Gamma(\alpha + 1)
        \leq \delta_2,
        \quad 2 \delta_3 + H_x \delta_3^\alpha + H_x \big( 2 \delta_3 + H_x \delta_3^\alpha \big)^\alpha
        \leq \delta_2.
    \end{displaymath}
    Let us show that the statement of the theorem is valid for $\delta = \min\{\delta_1, \delta_3\}$.

    Let $(t^\prime, w^\prime(\cdot)) \in G$ be fixed such that $\dist((t, w(\cdot)), (t^\prime, w^\prime(\cdot))) \leq \delta$.
    Let us note that $|t - t^\prime| \leq \delta$ due to \cref{proposition_dist}.

    Let us consider the case when $t^\prime \leq t$.
    Taking into account that $w(\cdot)$ is H\"{o}lder continuous of the order $\alpha$ with the constant $H_x$, applying \cref{proposition_dist}, we derive $\|w_{t^\prime}(\cdot) - w^\prime(\cdot)\|_{[0, t^\prime]} \leq \delta + H_x \delta^\alpha \leq \delta_2$.
    Since $\|w_{t^\prime}(\cdot)\|_{[0, t^\prime]} \leq M_x$, then, by \cref{lemma_continuity_fixed_t},
    \begin{equation} \label{proof_Theprem_continuity_1_1}
        | \Val(t^\prime, w_{t^\prime}(\cdot)) - \Val(t^\prime, w^\prime(\cdot)) |
        \leq \varepsilon / 3.
    \end{equation}
    According to \cref{Theorem_dynamic_programming_principle}, there exists $u^\prime(\cdot) \in \mU(t^\prime, t)$ such that, for the corrsponding motion $x^\prime(\cdot) = x(\cdot \mid t^\prime, w_{t^\prime}(\cdot), t, u^\prime(\cdot))$ of system \cref{system}, we have
    \begin{displaymath}
        \Big| \Val(t^\prime, w_{t^\prime}(\cdot))
        - \Val(t, x^\prime(\cdot)) - \int_{t^\prime}^{t} \chi(\tau, x^\prime(\tau), u^\prime(\tau)) \, \rd \tau \Big|
        \leq \varepsilon / 6.
    \end{displaymath}
    Hence, taking into account that $\|x^\prime(\cdot)\|_{[0, t]} \leq M_x$, we get
    \begin{equation} \label{proof_Theprem_continuity_1_2}
        | \Val(t^\prime, w_{t^\prime}(\cdot)) - \Val(t, x^\prime(\cdot)) |
        \leq \varepsilon / 3.
    \end{equation}
    Due to \cref{integral_equation,ID}, applying \cref{c_f}, we derive
    \begin{multline*}
        \|x^\prime(\tau) - w(\tau)\|
        \leq \frac{1}{\Gamma(\alpha)} \int_{t^\prime}^{\tau} \frac{\|f(\xi, x^\prime(\xi), u^\prime(\xi))\| + \|(^C D^\alpha w)(\xi)\|}{(\tau - \xi)^{1 - \alpha}} \, \rd \xi \\
        \leq \frac{(1 + M_x) c_f + R}{\Gamma(\alpha + 1)} (\tau - t^\prime)^\alpha
        \leq \delta_2,
        \quad \tau \in [t^\prime, t].
    \end{multline*}
    Thus, since $x^\prime(\tau) = w(\tau)$, $\tau \in [0, t^\prime]$, then $\|x^\prime(\cdot) - w(\cdot)\|_{[0, t]} \leq \delta_2$, and, by \cref{lemma_continuity_fixed_t},
    \begin{equation} \label{proof_Theprem_continuity_1_3}
        | \Val(t, x^\prime(\cdot)) - \Val(t, w(\cdot)) |
        \leq \varepsilon / 3.
    \end{equation}
    From \cref{proof_Theprem_continuity_1_1,proof_Theprem_continuity_1_2,proof_Theprem_continuity_1_3}, we conclude \cref{theorem_continuity_main}.

    Now, let us suppose that $t^\prime > t$.
    By \cref{Theorem_dynamic_programming_principle}, there exists $u(\cdot) \in \mU(t, t^\prime)$ such that, for the motion $x(\cdot) = x(\cdot \mid t, w(\cdot), t^\prime, u(\cdot))$ of system \cref{system}, we have
    \begin{displaymath}
        \Big| \Val(t, w(\cdot)) - \Val(t^\prime, x(\cdot)) - \int_{t}^{t^\prime} \chi(\tau, x(\tau), u(\tau)) \, \rd \tau \Big|
        \leq \varepsilon / 2,
    \end{displaymath}
    and, consequently, taking into account that $\|x(\cdot)\|_{[0, t^\prime]} \leq M_x$, we get
    \begin{equation} \label{proof_Theprem_continuity_2_1}
        |\Val(t, w(\cdot)) - \Val(t^\prime, x(\cdot))|
        \leq 2 \varepsilon / 3.
    \end{equation}
    Since $x(\cdot)$ is H\"{o}lder continuous of the order $\alpha$ with the constant $H_x$ and $x(\tau) = w(\tau)$, $\tau \in [0, t]$, then, applying \cref{proposition_dist}, we derive $\dist((t^\prime, x(\cdot)), (t, w(\cdot))) \leq \delta + H_x \delta^\alpha$.
    Hence, by the triangle inequality,
    \begin{multline*}
        \dist\big((t^\prime, x(\cdot)), (t^\prime, w^\prime(\cdot))\big) \\
        \leq \dist\big((t^\prime, x(\cdot)), (t, w(\cdot))\big)
        + \dist\big((t, w(\cdot)), (t^\prime, w^\prime(\cdot))\big)
        \leq 2 \delta + H_x \delta^\alpha.
    \end{multline*}
    Thus, using \cref{proposition_dist} again, we obtain
    \begin{displaymath}
        \|x(\cdot) - w^\prime(\cdot)\|_{[0, t^\prime]}
        \leq 2 \delta + H_x \delta^\alpha + H_x \big( 2 \delta + H_x \delta^\alpha \big)^\alpha
        \leq \delta_2.
    \end{displaymath}
    Therefore, according to \cref{lemma_continuity_fixed_t}, we conclude
    \begin{equation} \label{proof_Theprem_continuity_2_2}
        |\Val(t^\prime, x(\cdot)) - \Val(t^\prime, w^\prime(\cdot))|
        \leq \varepsilon / 3.
    \end{equation}
    From \cref{proof_Theprem_continuity_2_1,proof_Theprem_continuity_2_2}, we derive \cref{theorem_continuity_main}.
    The theorem is proved.
\end{proof}

\section{Coinvariant derivatives of fractional order}
\label{section_derivatives}

In order to write out the Hamilton--Jacobi--Bellman equation associated with the considered optimal control problem \cref{system,cost_functional}, let us introduce an appropriate notion of derivatives of functionals defined on the set $G$ of positions of system \cref{system}.

Let us denote $G^0 = \{(t, w(\cdot)) \in G: \, t < T\}$, and, for every $(t, w(\cdot)) \in G^0$, in accordance with \cref{initial_condition_general}, consider the set of admissible extensions $x(\cdot)$ of $w(\cdot)$:
\begin{equation} \label{X}
    X(t, w(\cdot))
    = \big\{ x(\cdot) \in \AC([0, T], \mathbb{R}^n):
    \, x(\tau) = w(\tau), \, \tau \in [0, t] \big\}.
\end{equation}

A functional $\varphi: G \rightarrow \mathbb{R}$ is called coinvariantly ($ci$-) differentiable of the order $\alpha$ at $(t, w(\cdot)) \in G^0$, if there exist $\partial_t^\alpha \varphi(t, w(\cdot)) \in \mathbb{R}$ and $\nabla^\alpha \varphi(t, w(\cdot)) \in \mathbb{R}^n$ such that, for every $x(\cdot) \in X(t, w(\cdot))$ and $\tau \in (t, T)$, the relation below is valid:
\begin{multline} \label{derivatives_definition}
    \varphi(\tau, x_\tau(\cdot)) - \varphi(t, w(\cdot))
    = \partial_t^\alpha \varphi(t, w(\cdot)) (\tau - t) \\
    + \big\langle \nabla^\alpha \varphi(t, w(\cdot)),
    \big(I^{1 - \alpha} (x(\cdot) - x(0))\big)(\tau) - \big(I^{1 - \alpha} (w(\cdot) - w(0)) \big)(t) \big\rangle \\
    + o(\tau - t),
\end{multline}
where $x_\tau(\cdot)$ is determined by $x(\cdot)$ according to \cref{x_t}, $o(\tau - t)$ may depend on $x(\cdot)$, and $o(\tau - t)/(\tau - t) \rightarrow 0$ when $\tau \downarrow t$.
Respectively, $\partial_t^\alpha \varphi(t, w(\cdot))$ and $\nabla^\alpha \varphi(t, w(\cdot))$ are called the $ci$-derivative in $t$ and the $ci$-gradient of the order $\alpha$ of $\varphi$ at $(t, w(\cdot))$.
Let us note that, due to \cref{int_D^alpha,X}, relation \cref{derivatives_definition} can be rewritten as follows:
\begin{multline} \label{derivatives_definition_alternative}
    \varphi(\tau, x_\tau(\cdot)) - \varphi(t, w(\cdot)) \\
    = \partial_t^\alpha \varphi(t, w(\cdot)) (\tau - t)
    + \big\langle \nabla^\alpha \varphi(t, w(\cdot)), \int_{t}^{\tau} (^C D^\alpha x)(\xi) \, \rd \xi \big\rangle
    + o(\tau - t).
\end{multline}

\begin{remark}
    In the case $\alpha = 1$, the introduced notion of $ci$-differentiability of the order $\alpha$ agrees with the notion of $ci$-differentiability developed, e.g., in \cite[\S~2]{Lukoyanov_2011_Eng} and \cite[\S~2.4]{Kim_1999} (see also \cite{Kim_1985,Lukoyanov_2000_JAMM,Lukoyanov_2003}).
    Indeed, in this case, the set $X(t, w(\cdot))$ consists of all Lipschitz continuous extensions of $w(\cdot)$, and \cref{derivatives_definition} becomes
    \begin{displaymath}
        \varphi(\tau, x_\tau(\cdot)) - \varphi(t, w(\cdot))
        = \partial_t^1 \varphi(t, w(\cdot)) (\tau - t)
        + \langle \nabla^1 \varphi(t, w(\cdot)), x(\tau) - w(t) \rangle
        + o(\tau - t).
    \end{displaymath}
\end{remark}

Let us note that \cref{derivatives_definition} determines the derivatives $\partial_t^\alpha \varphi(t, w(\cdot))$ and $\nabla^\alpha \varphi(t, w(\cdot))$ uniquely.
In order to verify this, one should take an arbitrary $l \in \mathbb{R}^n$, consider the extension $x^{(l)}(\cdot) \in X(t, w(\cdot))$ such that $(^C D^\alpha x)(\tau) = l$, $\tau \in (t, T]$, substitute it in \cref{derivatives_definition} (or, equivalently, in \cref{derivatives_definition_alternative}), and take into account the invariance of $\partial_t^\alpha \varphi(t, w(\cdot))$ and $\nabla^\alpha \varphi(t, w(\cdot))$ with respect to $x^{(l)}(\cdot)$ (see also the arguments in \cite[p.~34]{Lukoyanov_2011_Eng}).

Further, a functional $\varphi: G \rightarrow \mathbb{R}$ is called $ci$-smooth of the order $\alpha$, if it satisfies the following conditions:
(a) $\varphi$ is continuous;
(b) $\varphi$ is $ci$-differentiable of the order $\alpha$ at every $(t, w(\cdot)) \in G^0$;
(c) the functionals $\partial_t^\alpha \varphi: G^0 \rightarrow \mathbb{R}$ and $\nabla^\alpha \varphi: G^0 \rightarrow \mathbb{R}^n$ are continuous.
We recall that the set $G^0 \subset G$ is endowed with the metric $\dist$ (see \cref{dist}).

The notion of fractional $ci$-derivatives allows us to obtain a simple formula for the total derivative of a $ci$-smooth functional along motions of system \cref{system}.
\begin{lemma} \label{lemma_formula}
    Let a functional $\varphi: G \rightarrow \mathbb{R}$ be $ci$-smooth of the order $\alpha$.
    Then, for any $(t, w(\cdot)) \in G^0$, $x(\cdot) \in X(t, w(\cdot))$, and $\theta \in [t, T)$, the function $\omega(\tau) = \varphi(\tau, x_\tau(\cdot))$, $\tau \in [t, T]$, is continuous on $[t, T]$ and Lipschitz continuous on $[t, \theta]$.
    Moreover,
    \begin{equation} \label{main_Lem_formula}
        \frac{\rd \omega(\tau)}{\rd \tau}
        = \partial_t^\alpha \varphi(\tau, x_\tau(\cdot)) + \langle \nabla^\alpha \varphi(\tau, x_\tau(\cdot)), (^C D^\alpha x)(\tau) \rangle
        \text{ for a.e. } \tau \in [t, \theta].
    \end{equation}
\end{lemma}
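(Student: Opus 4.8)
The plan is to establish the three assertions—continuity on $[t,T]$, Lipschitz continuity on $[t,\theta]$, and the formula \cref{main_Lem_formula}—in that order, using the $ci$-differentiability relation \cref{derivatives_definition_alternative} as the infinitesimal building block. First I would fix $(t, w(\cdot)) \in G^0$, an extension $x(\cdot) \in X(t, w(\cdot))$, and $\theta \in [t, T)$, and set $\omega(\tau) = \varphi(\tau, x_\tau(\cdot))$. Continuity of $\omega$ on $[t,T]$ follows immediately from two facts assumed earlier: the continuity of the functional $\varphi$ (part (a) of $ci$-smoothness) and the continuity of the map $\tau \mapsto (\tau, x_\tau(\cdot))$ into $(G, \dist)$. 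The latter can be read off from \cref{proposition_dist}: since $x(\cdot) \in \AC([0,T],\mathbb{R}^n)$ is in particular continuous (indeed H\"{o}lder of order $\alpha$ by the reasoning behind \cref{H}), the estimate \cref{proposition_dist_1} bounds $\dist((\tau, x_\tau(\cdot)), (\tau', x_{\tau'}(\cdot)))$ by $|\tau - \tau'| + \varkappa(|\tau - \tau'|)$, which tends to $0$.

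Next I would prove the Lipschitz property on $[t,\theta]$ and, in the same sweep, the a.e. differentiability formula. The key is to take $t \leq \tau < \tau' \leq \theta$ and apply the definition of the $ci$-derivative \cref{derivatives_definition_alternative} at the base point $(\tau, x_\tau(\cdot)) \in G^0$ along the admissible extension $x(\cdot)$ itself. This yields
\begin{equation*}
    \omega(\tau') - \omega(\tau)
    = \partial_t^\alpha \varphi(\tau, x_\tau(\cdot)) (\tau' - \tau)
    + \big\langle \nabla^\alpha \varphi(\tau, x_\tau(\cdot)), \textstyle\int_{\tau}^{\tau'} (^C D^\alpha x)(\xi) \, \rd \xi \big\rangle
    + o(\tau' - \tau).
\end{equation*}
Because $\varphi$ is $ci$-smooth, the functionals $\partial_t^\alpha \varphi$ and $\nabla^\alpha \varphi$ are continuous on the compact set $\{(\tau, x_\tau(\cdot)): \tau \in [t,\theta]\} \subset G^0$, hence bounded there; moreover $\|(^C D^\alpha x)(\cdot)\|_{[0,\theta]} < \infty$ by the definition of $\AC$. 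The displayed identity therefore gives $|\omega(\tau') - \omega(\tau)| \leq C|\tau' - \tau| + o(\tau' - \tau)$, which, after a standard partition argument, yields the Lipschitz bound on $[t,\theta]$ (this is why one restricts to $\theta < T$: the derivatives need only be controlled away from the terminal time).

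Finally, to extract \cref{main_Lem_formula}, I would invoke the Lipschitz property just proved: $\omega$ is absolutely continuous on $[t,\theta]$, so it is differentiable a.e. At a point $\tau$ of differentiability of $\omega$ that is simultaneously a Lebesgue point of $(^C D^\alpha x)(\cdot)$—a co-null set—I divide the displayed increment by $\tau' - \tau$ and let $\tau' \downarrow \tau$. The first term converges to $\partial_t^\alpha \varphi(\tau, x_\tau(\cdot))$; in the inner product, $\frac{1}{\tau' - \tau}\int_\tau^{\tau'} (^C D^\alpha x)(\xi)\,\rd\xi \to (^C D^\alpha x)(\tau)$ by the Lebesgue differentiation theorem, while continuity of $\nabla^\alpha \varphi$ keeps the gradient factor controlled; the remainder vanishes by definition of $o(\tau' - \tau)$. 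This produces \cref{main_Lem_formula}. The main obstacle, and the point deserving the most care, is handling the error term $o(\tau' - \tau)$ uniformly enough to obtain the \emph{global} Lipschitz estimate: the $o$ in \cref{derivatives_definition_alternative} is defined pointwise at each base position and may a priori depend on the base point, so one must argue that the linear part (bounded by continuity of the $ci$-derivatives over the compact trajectory) dominates, rather than attempting to make the remainder uniform directly—the cleanest route is to bound increments by the continuous linear terms and absorb the remainder through the telescoping/partition estimate rather than through uniformity of $o$.
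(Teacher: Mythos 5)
Your continuity argument and your treatment of the a.e.\ formula \cref{main_Lem_formula} match the paper's proof essentially step for step: continuity of $\tau \mapsto (\tau, x_\tau(\cdot))$ via \cref{proposition_dist} composed with continuity of $\varphi$, and, for the formula, dividing the increment relation at a fixed base point by $\tau' - \tau$ and letting $\tau' \downarrow \tau$, where the pointwise $o$-term is harmless. The genuine gap is in the Lipschitz step. You correctly identify the obstacle---the remainder in \cref{derivatives_definition_alternative} is attached to each base position $(\tau, x_\tau(\cdot))$ separately, with no uniformity in $\tau$---but the mechanism you propose to overcome it, a ``telescoping/partition estimate,'' is exactly the argument that fails. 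If you partition $[\tau, \tau']$ by points $s_0 < s_1 < \dots < s_N$ and bound each increment by $C(s_{i+1}-s_i) + o_{s_i}(s_{i+1}-s_i)$, the accumulated error $\sum_i o_{s_i}(s_{i+1}-s_i)$ cannot be controlled: each term is small relative to its own mesh only for mesh sizes below a threshold that depends on $s_i$, and no finite partition chosen in advance accommodates all thresholds simultaneously. Telescoping is precisely the device that requires the uniformity you are trying to avoid.

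What does work---and what the paper does---is to pass to the limit \emph{first} and globalize \emph{afterwards}. For each fixed $\tau \in [t, \theta]$, taking $\limsup_{\delta \downarrow 0}$ of $|\omega(\tau + \delta) - \omega(\tau)|/\delta$ annihilates the pointwise $o$-term and yields the bound $L = (1 + \|(^C D^\alpha x)(\cdot)\|_{[t, T]}) M$ on the one-sided (forward) Dini derivative of $\omega$ at \emph{every} point of $[t, \theta]$, with $M$ the bound on $|\partial_t^\alpha \varphi|$ and $\|\nabla^\alpha \varphi\|$ along the compact trajectory. One then invokes a mean-value-type theorem for Dini derivatives of continuous functions (the paper cites \cite[Ch.~4, Theorem~1.2]{Bruckner_1978}): a continuous function on an interval whose forward difference quotients have all their limit points in $[-L, L]$ at every point is $L$-Lipschitz there. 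Equivalently, one can run a ``creeping along'' connectedness argument: for fixed $\varepsilon > 0$, the set of $s$ with $|\omega(s) - \omega(\tau)| \leq (L + \varepsilon)(s - \tau)$ is closed, and the pointwise Dini bound at its supremum forces that supremum to be the right endpoint. Note that only forward increments are controlled by \cref{derivatives_definition_alternative}, so a genuinely one-sided statement of this kind is needed. With this substitution for your partition step, your plan becomes the paper's proof.
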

\begin{proof}
    We follow the scheme from \cite[Lemma~2.1]{Lukoyanov_2011_Eng} (see also \cite[Lemma~3]{Plaksin_2019_IFAC}).

    Let $(t, w(\cdot)) \in G^0$, $x(\cdot) \in X(t, w(\cdot)$, and $\theta \in [t, T)$ be fixed.
    Since $x(\cdot)$ is continuous, then, by \cref{proposition_dist}, the function $[t, T] \ni \tau \mapsto (\tau, x_\tau(\cdot)) \in G$ is continuous.
    Hence, due to continuity of $\varphi$, we obtain that $\omega(\cdot)$ is continuous on $[t, T]$.
    Moreover, since the functionals $\partial_t^\alpha \varphi$ and $\nabla^\alpha \varphi$ are also continuous, there exists $M > 0$ such that $|\partial_t^\alpha \varphi(\tau, x_\tau(\cdot))| \leq M$ and $\|\nabla^\alpha \varphi(\tau, x_\tau(\cdot))\| \leq M$ for $\tau \in [t, \theta]$.
    Let us define $L = (1 + \|(^C D^\alpha x)(\cdot)\|_{[t, T]}) M$ and prove the inequality
    \begin{equation} \label{proof_Lem_formula_omega}
        |\omega(\tau) - \omega(\tau^\prime)|
        \leq L |\tau - \tau^\prime|,
        \quad \tau, \tau^\prime \in [t, \theta].
    \end{equation}
    Let $\tau \in [t, \theta]$ be fixed.
    Since $\varphi$ is $ci$-differentiable of the order $\alpha$ at $(\tau, x_\tau(\cdot))$ and $x(\cdot) \in X(\tau, x_\tau(\cdot))$, then, according to \cref{derivatives_definition_alternative}, for every $\delta \in (0, T - \tau)$, we have
    \begin{displaymath}
        \frac{\omega(\tau + \delta) -  \omega(\tau)}{\delta}
        = \partial_t^\alpha \varphi(\tau, x_\tau(\cdot))
        + \big\langle \nabla^\alpha \varphi(\tau, x_\tau(\cdot)), \frac{1}{\delta} \int_{\tau}^{\tau + \delta} (^C D^\alpha x)(\xi) \, \rd \xi \big\rangle
        + \frac{o(\delta)}{\delta},
    \end{displaymath}
    and, hence, $|\omega(\tau + \delta) - \omega(\tau)| / \delta \leq L + |o(\delta)|/\delta$.
    Thus, we get
    \begin{displaymath}
        \Big|\limsup_{\delta \downarrow 0} \frac{\omega(\tau + \delta) -  \omega(\tau)}{\delta} \Big|
        \leq L.
    \end{displaymath}
    Taking into account that this inequality hods for every $\tau \in [t, \theta]$, and $\omega(\cdot)$ is continuous on $[t, \theta]$, by Dini's theorem (see, e.g., \cite[Ch.~4, Theorem~1.2]{Bruckner_1978}), we conclude \cref{proof_Lem_formula_omega}.

    Further, let us prove \cref{main_Lem_formula}.
    Namely, let us show that the equality in \cref{main_Lem_formula} is valid for every $\tau \in (t, \theta)$ such that the derivatives $(^C D^\alpha x)(\tau)$ and $\rd \omega (\tau) / \rd \tau$ exist.
    According to \cref{derivatives_definition,Caputo_derivative}, we derive
    \begin{multline*}
        \frac{\rd \omega(\tau)}{\rd \tau}
        = \lim_{\delta \downarrow 0} \frac{\omega(\tau + \delta) - \omega(\tau)}{\delta}
        = \partial_t^\alpha \varphi(\tau, x_\tau(\cdot)) \\
        + \big\langle \nabla^\alpha \varphi(\tau, x_\tau(\cdot)),
        \lim_{\delta \downarrow 0} \frac{\big(I^{1 - \alpha} (x(\cdot) - x(0))\big)(\tau + \delta)
        - \big(I^{1 - \alpha} (x(\cdot) - x(0)) \big)(\tau)}{\delta} \big\rangle \\
        = \partial_t^\alpha \varphi(\tau, x_\tau(\cdot))
        + \langle \nabla^\alpha \varphi(\tau, x_\tau(\cdot)), (^C D^\alpha x)(\tau) \rangle.
    \end{multline*}
    The lemma is proved.
\end{proof}

\begin{remark}
    It seems that the proposed notion of fractional $ci$-differentiation may also be used for the needs of the stability theory for fractional-order systems.
    Namely, the absence of a simple rule for calculating the fractional derivative of the composition of two functions (see, e.g., \cite{Tarasov_2016} and the references therein) leads to the difficulties in defining the notion of the total derivative of a Lyapunov function $V(t, x)$ along motions of the system (see, e.g. \cite{Agarwal_O'Regan_Hristova_2015}).
    In particular, it becomes important and relevant to obtain different estimates for the fractional derivative of the composition $V(t,x(t))$ of the function $V(t, x)$ and a motion $x(\cdot)$ (see, e.g., \cite{Alikhanov_2010,Aguila-Camacho_Duarte-Mermoud_Gallegos_2014,Gomoyunov_2019_FCAA,Tuan_Trinh_2018} and the references therein).
    However, according to the discussion in \cref{Section_Example_1} (see also, e.g., \cite{Burton_2011}), it seems that, by analogy with functional-differential systems (see \cite{Krasovskii_1963} and also \cite{Kim_1999}), for studying the questions of stability for fractional-order systems, it may be useful to apply Lyapunov--Krasovskii functionals $V(t, w(\cdot))$ that depend on the history $w(\cdot)$ of a motion $x(\cdot)$.
    In this case, \cref{lemma_formula} gives a simple formula for the total derivative of $V(t, w(\cdot))$ along motions of the system.
\end{remark}

\section{Hamilton--Jacobi--Bellman equation}
\label{section_HJB}

With the optimal control problem \cref{system,cost_functional}, let us associate the Hamilton--Jacobi--Bellman equation
\begin{equation} \label{HJB}
    \partial_t^\alpha \varphi(t, w(\cdot))
    + \H\big( t, w(t), \nabla^\alpha \varphi(t, w(\cdot)) \big)
    = 0,
    \quad (t, w(\cdot)) \in G^0,
\end{equation}
where $\partial_t^\alpha \varphi(t, w(\cdot))$ and $\nabla^\alpha \varphi(t, w(\cdot))$ are the $ci$-derivatives of the order $\alpha$ of $\varphi$ at $(t, w(\cdot))$ (see \cref{section_derivatives}), and the Hamiltonian is defined in the usual way:
\begin{equation} \label{Hamiltonian}
    \H(\tau, x, s)
    = \min_{u \in P} \big( \langle s, f(\tau, x, u) \rangle + \chi(\tau, x, u)\big),
    \quad \tau \in [0, T], \quad x, s \in \mathbb{R}^n.
\end{equation}

The main result of this section is the following.
\begin{theorem} \label{theorem_differentiability}
    If the value functional $\Val$ {\rm(}see \cref{Val}{\rm)} is $ci$-smooth, then it satisfies Hamilton--Jacobi--Bellman equation \cref{HJB}.
\end{theorem}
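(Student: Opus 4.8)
**

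The standard approach here is to derive the Hamilton--Jacobi--Bellman equation as the infinitesimal version of the dynamic programming principle (\cref{Theorem_dynamic_programming_principle}), exploiting the differentiation formula established in \cref{lemma_formula}. Fix a position $(t, w(\cdot)) \in G^0$. The plan is to prove the two inequalities
\begin{displaymath}
    \partial_t^\alpha \Val(t, w(\cdot)) + \H\big(t, w(t), \nabla^\alpha \Val(t, w(\cdot))\big) \geq 0
\end{displaymath}
and
\begin{displaymath}
    \partial_t^\alpha \Val(t, w(\cdot)) + \H\big(t, w(t), \nabla^\alpha \Val(t, w(\cdot))\big) \leq 0
\end{displaymath}
separately, and then conclude the equation \cref{HJB} by combining them. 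Throughout, since $\Val$ is assumed $ci$-smooth of the order $\alpha$, \cref{lemma_formula} applies to the function $\omega(\tau) = \Val(\tau, x_\tau(\cdot))$ for any admissible extension $x(\cdot) \in X(t, w(\cdot))$.

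For the first inequality, I would fix an arbitrary $u \in P$ and use the constant control $u(\tau) \equiv u$ on $[t, \theta]$ for small $\theta > t$; let $x(\cdot)$ be the corresponding motion extended into $X(t, w(\cdot))$. The dynamic programming principle gives $\Val(t, w(\cdot)) \leq \Val(\theta, x_\theta(\cdot)) + \int_t^\theta \chi(\tau, x(\tau), u)\, \rd\tau$, so that $\omega(\theta) - \omega(t) + \int_t^\theta \chi\, \rd\tau \geq 0$. Dividing by $\theta - t$ and letting $\theta \downarrow t$, the formula \cref{main_Lem_formula} together with continuity yields, via $(^C D^\alpha x)(t) = f(t, w(t), u)$, the bound $\partial_t^\alpha \Val(t, w(\cdot)) + \langle \nabla^\alpha \Val(t, w(\cdot)), f(t, w(t), u)\rangle + \chi(t, w(t), u) \geq 0$. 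Taking the minimum over $u \in P$ produces the first inequality by the definition \cref{Hamiltonian} of the Hamiltonian.

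For the reverse inequality, the idea is to use near-optimal controls: for each small $\theta > t$, the dynamic programming principle lets me select a control $u^{(\theta)}(\cdot) \in \mU(t, \theta)$ whose motion $x^{(\theta)}(\cdot)$ almost achieves the infimum, so that $\omega(\theta) - \omega(t) + \int_t^\theta \chi(\tau, x^{(\theta)}(\tau), u^{(\theta)}(\tau))\, \rd\tau \leq o(\theta - t)$. Applying \cref{main_Lem_formula} to write $\omega(\theta) - \omega(t)$ as an integral of $\partial_t^\alpha \Val + \langle \nabla^\alpha \Val, (^C D^\alpha x^{(\theta)})\rangle$ and combining with the cost integral, the integrand is pointwise bounded below by the Hamiltonian $\H(\tau, x^{(\theta)}(\tau), \nabla^\alpha \Val(\tau, x^{(\theta)}_\tau(\cdot)))$. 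The main obstacle is controlling the dependence of the integrand on the varying controls $u^{(\theta)}(\cdot)$ and the varying motions $x^{(\theta)}(\cdot)$ as $\theta \downarrow t$: one must argue that these quantities converge uniformly to their values at $(t, w(\cdot))$. This is where the H\"{o}lder continuity of motions from part $ii)$ of \cref{proposition_properties}, the continuity of $\Val$ and of $\partial_t^\alpha \Val$ and $\nabla^\alpha \Val$, and the continuity of $\H$ in $(\tau, x, s)$ (inherited from \cref{Assumption_f,Assumption_sigma_chi}) are essential, allowing the averaged integrand to converge to $\partial_t^\alpha \Val(t, w(\cdot)) + \H(t, w(t), \nabla^\alpha \Val(t, w(\cdot)))$, thereby yielding the second inequality.
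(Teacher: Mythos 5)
Your proposal is correct and follows essentially the same route as the paper's proof: both inequalities are obtained from the dynamic programming principle (\cref{Theorem_dynamic_programming_principle}) combined with the total-derivative formula of \cref{lemma_formula}, using constant controls for one direction and near-optimal controls for the other, with the limit passage justified by the uniform H\"older bound on motions and the continuity of $\partial_t^\alpha \Val$, $\nabla^\alpha \Val$, $f$, and $\chi$ (the paper packages this as an explicit $\varepsilon$--$\delta$ equicontinuity estimate for the function $\mu$ in \cref{mu}, built on the compactness result of \cref{proposition_XG}, whereas you phrase it as uniform convergence of averaged integrands, and you minimize over $P$ before freezing the position rather than after). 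The only loose point is the phrase ``$(^C D^\alpha x)(t) = f(t, w(t), u)$'', which is meaningful only almost everywhere; the correct justification, as in the paper, is that under a constant control the integrand in \cref{main_Lem_formula} augmented by $\chi$ is continuous in $\tau$, so its average over $[t, \theta]$ converges to its value at $\tau = t$.
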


Let us note that, since $\Val$ is always continuous by \cref{theorem_continuity}, then the essential assumptions of \cref{theorem_differentiability} are $ci$-differentiability of the order $\alpha$ of $\rho$ at every $(t, w(\cdot)) \in G^0$ and continuity of the functionals $\partial_t^\alpha \Val: G^0 \rightarrow \mathbb{R}$ and $\nabla^\alpha \Val: G^0 \rightarrow \mathbb{R}^n$.

In order to prove \cref{theorem_differentiability}, and also \cref{theorem_varphi_rho} below, we need some auxiliary constructions.

For every $(t, w(\cdot)) \in G^0$, taking $c_f$ from \cref{c_f}, let us consider the sets
\begin{multline}\label{X_ast}
    X_\ast(t, w(\cdot))
    = \big\{ x(\cdot) \in X(t, w(\cdot)): \\
    \|(^C D^\alpha x)(\tau)\| \leq (1 + \|x(\tau)\|) c_f \text{ for a.e. } \tau \in [t, T] \big\},
\end{multline}
where $X(t, w(\cdot))$ is defined according to \cref{X}, and
\begin{displaymath}
    G_\ast(t, w(\cdot))
    = \big\{ (\tau, x_\tau(\cdot)) \in G:
    \, \tau \in [0, T], \, x(\cdot) \in X_\ast(t, w(\cdot)) \big\}.
\end{displaymath}

In the proposition below, some properties of these sets are given.
\begin{proposition} \label{proposition_XG}
    For any $(t, w(\cdot)) \in G^0$, the following statements are valid:
    \begin{itemize}
        \item[$i)$]
            for any $u(\cdot) \in \mU(t, T)$, the motion $x(\cdot) = x(\cdot \mid t, w(\cdot), T, u(\cdot))$ of system \cref{system} satisfies the inclusion $x(\cdot) \in X_\ast(t, w(\cdot))$;
            therefore, in particular, we have $(\tau, x_\tau(\cdot)) \in G_\ast(t, w(\cdot))$, $\tau \in [0, T]$;
        \item[$ii)$]
            there exist $R_\ast > 0$ and $H_\ast > 0$ such that, for any $x(\cdot) \in X_\ast(t, w(\cdot)),$ the inequalities below hold:
            \begin{displaymath}
                \|x(\cdot)\|_{[0, T]} \leq R_\ast,
                \quad \|x(\tau) - x(\tau^\prime)\|
                \leq H_\ast |\tau - \tau^\prime|^\alpha,
                \quad \tau, \tau^\prime \in [0, T];
            \end{displaymath}
        \item[$iii)$]
            the set $X_\ast(t, w(\cdot))$ is a compact subset of $\AC([0, T], \mathbb{R}^n)$;
        \item[$iv)$]
            the set $G_\ast(t, w(\cdot))$ is a compact subset of $G$.
    \end{itemize}
\end{proposition}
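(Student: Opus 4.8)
The plan is to handle the four statements in order, obtaining parts $i)$ and $ii)$ directly from the a priori estimates already in hand for motions, and then reducing parts $iii)$ and $iv)$ to a single compactness argument in function space.

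For part $i)$, I would simply note that a motion $x(\cdot) = x(\cdot \mid t, w(\cdot), T, u(\cdot))$ solves the equation in \cref{system}, so that $\|(^C D^\alpha x)(\tau)\| = \|f(\tau, x(\tau), u(\tau))\| \leq (1 + \|x(\tau)\|) c_f$ for a.e.\ $\tau \in [t, T]$ by \cref{c_f}; since $x(\cdot)$ also agrees with $w(\cdot)$ on $[0, t]$, the defining inequality of $X_\ast$ (see \cref{X_ast}) is met, whence $x(\cdot) \in X_\ast(t, w(\cdot))$ and $(\tau, x_\tau(\cdot)) \in G_\ast(t, w(\cdot))$ for all $\tau \in [0, T]$. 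For part $ii)$, I would fix $R$ with $\|w(\cdot)\|_{[0, t]} \leq R$ and $\|(^C D^\alpha w)(\cdot)\|_{[0, t]} \leq R$ and repeat verbatim the argument of \cref{proposition_properties}: writing an arbitrary $x(\cdot) \in X_\ast$ through \cref{ID} split at $t$, bounding the history part by \cref{lemma_mean_value} and the tail by the $X_\ast$ constraint, and applying \cref{lemma_Bellman_Gronwall}, I obtain a uniform bound $R_\ast$; once the Caputo derivative is uniformly dominated by $\max\{R, (1 + R_\ast) c_f\}$, the $\alpha$-H\"older estimate with a common constant $H_\ast$ follows from \cref{ID,H_alpha}.

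The hard part will be part $iii)$, the compactness of $X_\ast(t, w(\cdot))$ in $\AC([0, T], \mathbb{R}^n)$ under the uniform norm. By part $ii)$ the family is uniformly bounded and equi-$\alpha$-H\"older, hence equicontinuous, so Arzel\`a--Ascoli yields relative compactness in $\C([0, T], \mathbb{R}^n)$; the real issue is closedness. Given $x_k(\cdot) \in X_\ast$ with $x_k(\cdot) \to x(\cdot)$ uniformly, the derivatives $\psi_k = (^C D^\alpha x_k)$ are uniformly bounded in $\Linf([0, T], \mathbb{R}^n) = (L^1)^\ast$, so I would pass to a subsequence with $\psi_k \rightharpoonup^\ast \psi$ by weak-$\ast$ sequential compactness of norm-bounded sets (metrizable since $L^1$ is separable). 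As the kernel $\xi \mapsto (\tau - \xi)^{\alpha - 1} / \Gamma(\alpha)$ lies in $L^1([0, \tau])$ for every $\tau$, testing the weak-$\ast$ convergence against it gives $(I^\alpha \psi_k)(\tau) \to (I^\alpha \psi)(\tau)$ pointwise, and together with $x_k(0) = w(0)$ and \cref{ID} this forces $x(\tau) = x(0) + (I^\alpha \psi)(\tau)$, so $x(\cdot) \in \AC$ with $(^C D^\alpha x) = \psi$; the identity $x = w$ on $[0, t]$ passes to the uniform limit. The genuinely delicate point is that the pointwise growth bound survives: for fixed nonnegative $g \in \Linf$, the set $\{\phi \in \Linf : \|\phi(\tau)\| \leq g(\tau) \text{ a.e.}\}$ is an intersection of the weak-$\ast$ closed half-spaces $\{\phi : \int_A \langle \phi(\tau), v \rangle \, \rd \tau \leq \int_A g(\tau) \, \rd \tau\}$ over measurable $A$ and $v$ in a countable dense subset of the unit sphere, hence weak-$\ast$ closed. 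Since $x_k \to x$ uniformly, for each $\varepsilon > 0$ the tail satisfies $\|\psi_k(\tau)\| \leq (1 + \|x(\tau)\| + \varepsilon) c_f$ a.e.; the weak-$\ast$ limit therefore lands in the corresponding closed set, and letting $\varepsilon \downarrow 0$ yields $\|\psi(\tau)\| \leq (1 + \|x(\tau)\|) c_f$ a.e.\ on $[t, T]$, i.e.\ $x(\cdot) \in X_\ast$.

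Finally, for part $iv)$ I would realize $G_\ast(t, w(\cdot))$ as the image of the compact set $[0, T] \times X_\ast(t, w(\cdot))$ under the map $(\tau, x(\cdot)) \mapsto (\tau, x_\tau(\cdot))$ and show this map continuous into $(G, \dist)$, so that $G_\ast$ is compact as a continuous image of a compact set. Continuity is a direct consequence of \cref{proposition_dist}: if $\tau_k \to \tau$ and $x_k \to x$ uniformly, then, applying \cref{proposition_dist_1} with the two positions ordered by $\min\{\tau_k, \tau\}$, the distance is bounded above by $|\tau_k - \tau|$, the $\alpha$-H\"older modulus $H_\ast |\tau_k - \tau|^\alpha$ furnished by part $ii)$, and $\|x_k(\cdot) - x(\cdot)\|_{[0, T]}$, each of which tends to $0$.
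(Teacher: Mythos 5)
Your proposal is correct, and for parts $i)$, $ii)$\creflastconjunction$iv)$ it essentially matches the paper: part $i)$ is read off from \cref{X_ast} and the sublinear growth condition \cref{c_f}; part $ii)$ is obtained, as the paper states, ``by analogy with parts $i)$ and $ii)$ of \cref{proposition_properties}''; and part $iv)$ in the paper is the sequential version of your argument --- given a sequence in $G_\ast(t, w(\cdot))$, the paper lifts it to extensions $x^{(m)}(\cdot) \in X_\ast(t, w(\cdot))$, extracts convergent subsequences $t_{m_i} \rightarrow \theta$ and $x^{(m_i)}(\cdot) \rightarrow x_\ast(\cdot)$ via part $iii)$, and concludes convergence in $(G, \dist)$ by \cref{proposition_dist}, which is exactly the content of your ``continuous image of the compact set $[0, T] \times X_\ast(t, w(\cdot))$'' packaging. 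The genuine difference is part $iii)$: the paper does not prove it at all, but cites it as a consequence of an assertion in an earlier paper of the author, whereas you supply a complete functional-analytic proof --- Arzel\`a--Ascoli for relative compactness via the uniform bound and equi-H\"older estimate of part $ii)$, then closedness in $\C([0, T], \mathbb{R}^n)$ by extracting a weak-$\ast$ limit $\psi$ of the uniformly bounded Caputo derivatives in $\Linf = (L^1)^\ast$, testing against the integrable kernel $(\tau - \xi)^{\alpha - 1}$ to identify the limit as $x(0) + (I^\alpha \psi)(\cdot)$, and preserving the pointwise growth constraint through the weak-$\ast$ limit by writing the constraint set as an intersection of weak-$\ast$ closed half-spaces with an $\varepsilon$-inflation to exploit the uniform convergence of $x_k(\cdot)$. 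All of these steps are sound (in particular the half-space characterization of $\{\phi : \|\phi(\tau)\| \leq g(\tau) \text{ a.e.}\}$ via a countable dense set of directions is a valid way to get weak-$\ast$ closedness). What your route buys is self-containedness: the reader need not consult the external reference. What the paper's route buys is brevity and reuse of an already-established result; the underlying mathematics of that cited assertion is of the same compactness type as what you wrote out.
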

\begin{proof}
    Part $i)$ follows from definition \cref{X_ast} of $X_\ast(t, w(\cdot))$ and the sublinear growth of $f$ in $x$ (see \cref{Assumption_f}).
    The proof of $ii)$ is carried out by analogy with parts $i)$ and $ii)$ of \cref{proposition_properties}.
    Part $iii)$ is a consequence of \cite[Assertion~7]{Gomoyunov_2019_Trudy_Eng}.

    Let us prove part $iv)$.
    Let $(t, w(\cdot)) \in G^0$ and a sequence $\{(t_m, w_m(\cdot))\}_{m \in \mathbb{N}} \subset G_\ast(t, w(\cdot))$ be fixed.
    For every $m \in \mathbb{N}$, let us choose $x^{(m)}(\cdot) \in X_\ast(t, w(\cdot))$ such that $x_{t_m}^{(m)}(\cdot) = w_m(\cdot)$.
    According to part $iii)$, there exist $\theta \in [0, T]$, $x_\ast(\cdot) \in X_\ast(t, w(\cdot))$ and subsequences $\{t_{m_i}\}_{i \in \mathbb{N}}$, $\{x^{(m_i)}(\cdot)\}_{i \in \mathbb{N}}$ such that $t_{m_i} \rightarrow \theta$ and $\|x_\ast(\cdot) - x^{(m_i)}(\cdot)\|_{[0, T]} \rightarrow 0$ when $i \rightarrow \infty$.
    Let us consider the function $w_\ast(\tau) = x_\ast(\tau),$ $\tau \in [0, \theta]$.
    The inclusion $x_\ast(\cdot) \in X_\ast(t, w(\cdot))$ implies $(\theta, w_\ast(\cdot)) \in G_\ast(t, w(\cdot))$.
    Applying \cref{proposition_dist} (see also \cite[Lemma~1.1]{Lukoyanov_2011_Eng}), we derive $\dist((\theta, w_\ast(\cdot)), (t_{m_i}, w_{m_i}(\cdot))) \rightarrow 0$ when $i \rightarrow \infty$, and, therefore, the subsequence $\{(t_{m_i}, w_{m_i}(\cdot))\}_{i \in \mathbb{N}}$ converges to $(\theta, w_\ast(\cdot)) \in G_\ast(t, w(\cdot))$.
    Thus, the set $G_\ast(t, w(\cdot))$ is a compact subset of $G$.
    The proposition is proved.
\end{proof}

\begin{proof}[Proof of \cref{theorem_differentiability}]
    Let us note that \cref{lemma_formula} allows us to prove the theorem by applying the standard arguments (see, e.g., \cite[Proposition~2.4.3]{Yong_2015}).

    Let $(t, w(\cdot)) \in G^0$ be fixed, and let $R_\ast$ and $H_\ast$ be chosen according to \cref{proposition_XG}.
    Let us take $\theta \in (t, T)$ and introduce the set
    \begin{displaymath}
        G^\theta_\ast(t, w(\cdot))
        = \big\{ (t^\prime, w^\prime(\cdot)) \in G_\ast(t, w(\cdot)):
        \, t^\prime \leq \theta \big\}.
    \end{displaymath}
    It follows from part~$iv)$ of \cref{proposition_XG} that $G^\theta_\ast(t, w(\cdot))$ is a compact subset of $G^0$.
    Therefore, since the functional $\nabla^\alpha \Val: G^0 \rightarrow \mathbb{R}^n$ is continuous, there exists $M > 0$ such that $\|\nabla^\alpha \Val(t^\prime, w^\prime(\cdot))\| \leq M$ for $(t^\prime, w^\prime(\cdot)) \in G^\theta_\ast(t, w(\cdot))$.

    Further, let us fix $\varepsilon > 0$.
    Due to continuity of the functionals $\partial_t^\alpha \Val: G^0 \rightarrow \mathbb{R}$ and $\nabla^\alpha \Val$, let us choose $\delta_1 > 0$ such that, for any $(t^\prime, w^\prime(\cdot))$, $(t^{\prime \prime}, w^{\prime \prime}(\cdot)) \in G^\theta_\ast(t, w(\cdot))$ satisfying $\dist((t^\prime, w^\prime(\cdot)),(t^{\prime \prime}, w^{\prime \prime}(\cdot))) \leq \delta_1 + H_\ast \delta_1^\alpha$, the inequality below holds:
    \begin{displaymath}
        |\partial_t^\alpha \Val(t^\prime, w^\prime(\cdot)) - \partial_t^\alpha \Val(t^{\prime \prime}, w^{\prime \prime}(\cdot))|
        + \|\nabla^\alpha \Val(t^\prime, w^\prime(\cdot)) - \nabla^\alpha \Val(t^{\prime \prime}, w^{\prime \prime}(\cdot))\| (1 + R_\ast) c_f
        \leq \varepsilon / 2,
    \end{displaymath}
    where $c_f$ is taken from \cref{c_f}.
    Moreover, since $f$ and $\chi$ are continuous (see \cref{Assumption_f,Assumption_sigma_chi}), there exists $\delta_2 > 0$ such that, for any $\tau,$ $\tau^\prime \in [0, T]$, $x$, $x^\prime \in B(R_\ast)$ satisfying $|\tau - \tau^\prime| \leq \delta_2$, $\|x - x^\prime\| \leq H_\ast \delta_2^\alpha$, and any $u \in P$, we get
    \begin{displaymath}
        M \|f(\tau, x, u) - f(\tau^\prime, x^\prime, u)\|
        + |\chi(\tau, x, u) - \chi(\tau^\prime, x^\prime, u)|
        \leq \varepsilon / 2.
    \end{displaymath}
    Let us define $\delta = \min\{\delta_1, \delta_2, \theta - t\}$.
    Hence, according to \cref{proposition_dist,proposition_XG}, we obtain the following property.
    Let $u(\cdot) \in \mU(t, T)$, and let $x(\cdot) = x(\cdot \mid t, w(\cdot), T, u(\cdot))$ be the motion of system \cref{system}.
    Then, for every $u^\prime \in P$, denoting
    \begin{equation} \label{mu}
        \mu(\tau)
        = \partial_t^\alpha \Val(\tau, x_\tau(\cdot))
        + \langle \nabla^\alpha \Val(\tau, x_\tau(\cdot)), f(\tau, x(\tau), u^\prime) \rangle
        + \chi(\tau, x(\tau), u^\prime)
    \end{equation}
    for $\tau \in [t, \theta]$, we have $|\mu(\tau) - \mu(\tau^\prime)| \leq \varepsilon$ for any $\tau,$ $\tau^\prime \in [t, \theta]$ such that $|\tau - \tau^\prime | \leq \delta$.

    Now, let us prove that
    \begin{equation} \label{theorem_differentiability_proof_1}
        \partial_t^\alpha \Val(t, w(\cdot))
        + \H\big(t, w(t), \nabla^\alpha \Val(t, w(\cdot)) \big)
        \geq 0.
    \end{equation}
    For every $u \in P$, let us consider the constant control $u(\tau) = u$, $\tau \in [t, t + \delta]$, and the corresponding motion $x(\cdot) = x(\cdot \mid t, w(\cdot), t + \delta, u(\cdot))$ of system \cref{system}.
    By \cref{Theorem_dynamic_programming_principle},
    \begin{equation} \label{theorem_differentiability_proof_1_1}
        \Val(t + \delta, x(\cdot))
        + \int_{t}^{t + \delta} \chi(\tau, x(\tau), u) \, \rd \tau
        - \Val(t, w(\cdot)) \geq 0.
    \end{equation}
    Formally extending the motion $x(\cdot)$ up to $T$ and applying \cref{lemma_formula}, we obtain that the function $\omega(\tau) = \Val(\tau, x_\tau(\cdot)),$ $\tau \in [t, t + \delta]$, is Lipschitz continuous and
    \begin{displaymath}
        \frac{\rd \omega(\tau)}{\rd \tau}
        = \partial_t^\alpha \Val(\tau, x_\tau(\cdot))
        + \langle \nabla^\alpha \Val(\tau, x_\tau(\cdot)), f(\tau, x(\tau), u) \rangle
        \text{ for a.e. } \tau \in [t, t + \delta].
    \end{displaymath}
    Therefore, we have
    \begin{multline} \label{theorem_differentiability_proof_1_2}
            \Val(t + \delta, x(\cdot)) - \Val(t, w(\cdot))
            = \omega(t + \delta) - \omega(t)
            = \int_{t}^{t + \delta} \frac{\rd \omega(\tau)}{\rd \tau} \, \rd \tau \\
            = \int_{t}^{t + \delta} \Big( \partial_t^\alpha \Val(\tau, x_\tau(\cdot))
            + \langle \nabla^\alpha \Val (\tau, x_\tau(\cdot)), f(\tau, x(\tau), u) \rangle \Big) \, \rd \tau.
    \end{multline}
    From \cref{theorem_differentiability_proof_1_1,theorem_differentiability_proof_1_2}, we derive
    \begin{displaymath}
        \int_{t}^{t + \delta} \Big( \partial_t^\alpha \Val (\tau, x_\tau(\cdot))
        + \langle \nabla^\alpha \Val (\tau, x_\tau(\cdot)), f(\tau, x(\tau), u) \rangle + \chi(\tau, x(\tau), u) \Big) \, \rd \tau
        \geq 0.
    \end{displaymath}
    Hence, the choice of $\delta$ yields (see \cref{mu})
    \begin{displaymath}
        \partial_t^\alpha \Val (t, w(\cdot))
        + \langle \nabla^\alpha \Val (t, w(\cdot)), f(t, w(t), u) \rangle + \chi(t, w(t), u)
        \geq - \varepsilon.
    \end{displaymath}
    Since this estimate holds for every $u \in P$ and $\varepsilon > 0$, then, due to \cref{Hamiltonian}, we get \cref{theorem_differentiability_proof_1}.

    On the other hand, let us show that
    \begin{equation} \label{theorem_differentiability_proof_2}
        \partial_t^\alpha \Val(t, w(\cdot))
        + \H \big(t, w(t), \nabla^\alpha \Val(t, w(\cdot)) \big)
        \leq 0.
    \end{equation}
    According to \cref{Theorem_dynamic_programming_principle}, there exists $u(\cdot) \in \mU(t, t + \delta)$ such that
    \begin{displaymath}
        \Val(t + \delta, x(\cdot))
        + \int_{t}^{t + \delta} \chi(\tau, x(\tau), u(\tau)) \, \rd \tau
        - \Val(t, w(\cdot))
        \leq \varepsilon \delta,
    \end{displaymath}
    where $x(\cdot) = x(\cdot \mid t, w(\cdot), t + \delta, u(\cdot))$.
    Then, arguing as above, we obtain
    \begin{displaymath}
        \int_{t}^{t + \delta} \Big( \partial_t^\alpha \Val(\tau, x_\tau(\cdot))
        + \langle \nabla^\alpha \Val(\tau, x_\tau(\cdot)), f(\tau, x(\tau), u(\tau)) \rangle
        + \chi(\tau, x(\tau), u(\tau)) \Big) \, \rd \tau
        \leq \varepsilon \delta
    \end{displaymath}
    and, by the choice of $\delta$,
    \begin{displaymath}
        \int_{t}^{t + \delta} \Big( \partial_t^\alpha \Val(t, w(\cdot))
        + \langle \nabla^\alpha \Val(t, w(\cdot)), f(t, w(t), u(\tau)) \rangle
        + \chi(t, w(t), u(\tau)) \Big) \, \rd \tau
        \leq 2 \varepsilon \delta.
    \end{displaymath}
    Hence, taking \cref{Hamiltonian} into account, we derive
    \begin{displaymath}
        \partial_t^\alpha \Val (t, w(\cdot))
        + \H\big( t, w(t), \nabla^\alpha \Val (t, w(\cdot)) \big)
        \leq 2 \varepsilon.
    \end{displaymath}
    Since this inequality is valid for every $\varepsilon > 0$, then we get \cref{theorem_differentiability_proof_2}.

    From \cref{theorem_differentiability_proof_1,theorem_differentiability_proof_2}, we conclude \cref{HJB}.
    The theorem is proved.
\end{proof}

\section{Optimal control strategy}
\label{section_optimal_control_strategy}

In this section, we establish the result that, in some sense, is converse to \cref{theorem_differentiability}.
Namely, we prove that a $ci$-smooth of the order $\alpha$ functional $\varphi: G \rightarrow \mathbb{R}$ that satisfies Hamilton--Jacobi--Bellman equation \cref{HJB} and the natural right-end condition (see \cref{Val_T})
\begin{equation}\label{terminal_condition}
    \varphi(T, w(\cdot))
    = \sigma(w(T)),
    \quad w(\cdot) \in \AC([0, T], \mathbb{R}^n),
\end{equation}
coincides with the value functional $\Val$ (see \cref{Val}).
Furthermore, we propose a way of forming $\varepsilon$-optimal controls on the basis of this functional $\varphi$ by using a stepwise feedback control scheme.
In order to formulate this result, we consider the following formalization of such control schemes, which goes back to the positional approach in differential games \cite{Krasovskii_Subbotin_1988,Krasovskii_Krasovskii_1995} (see also \cite{Osipov_1971,Lukoyanov_2011_Eng,Lukoyanov_2000_JAMM,Lukoyanov_2003} and \cite{Gomoyunov_2019_Trudy_Eng,Gomoyunov_2019_DGAA}).

By a control strategy, we mean an arbitrary functional $U: G^0 \rightarrow P$.
Let us fix a position $(t, w(\cdot)) \in G^0$ and a partition of the time interval $[t, T]$:
\begin{equation} \label{Delta}
    \Delta
    = \{\tau_j\}_{j \in \overline{1, k}},
    \quad \tau_1 = t,
    \quad \tau_{j + 1} > \tau_j,
    \quad j \in \overline{1, k},
    \quad \tau_{k + 1} = T,
    \quad k \in \mathbb{N}.
\end{equation}
The pair $\{U, \Delta\}$ is called a control law.
This control law forms a piecewise constant control $u(\cdot) \in \mU(t, T)$ and the corresponding motion $x(\cdot) = x(\cdot \mid t, w(\cdot), T, u(\cdot))$ of system \cref{system} according to the following recursive procedure:
at every time $\tau_j$, $j \in \overline{1, k}$, the history $x_{\tau_j}(\cdot)$ (see \cref{x_t}) of the motion $x(\cdot)$ on $[0, \tau_j]$ is measured, the value $u_j = U(\tau_j, x_{\tau_j}(\cdot))$ is computed, and then the constant control $u(\tau) = u_j$ is applied until $\tau_{j + 1}$, when a new measurement of the history is taken.
In a short form, we have
\begin{equation} \label{control_law}
    u(\tau)
    = U(\tau_j, x_{\tau_j}(\cdot)),
    \quad \tau \in [\tau_j, \tau_{j + 1}),
    \quad j \in \overline{1, k}.
\end{equation}
Formally putting $u(T) = \tilde{u}$ for some fixed $\tilde{u} \in P$, we conclude that the described procedure determines $u(\cdot)$ and $x(\cdot)$ uniquely.
For the obtained control $u(\cdot)$, we also use the notation $u(\cdot \mid t, w(\cdot), U, \Delta)$.
Let us note that, according to \cref{Val}, the corresponding value of cost functional \cref{cost_functional_general} satisfies the estimate
\begin{displaymath}
    J\big(t, w(\cdot), u(\cdot \mid t, w(\cdot), U, \Delta)\big)
    \geq \Val(t, w(\cdot)).
\end{displaymath}

Taking this into account, we call a control strategy $U^\circ$ optimal if the following statement holds.
For any $(t, w(\cdot)) \in G^0$ and $\varepsilon > 0$, there exists $\delta > 0$ such that, for any partition $\Delta$ (see \cref{Delta}) with the diameter $\diam(\Delta) = \max_{j \in \overline{1, k}} (\tau_{j + 1} - \tau_j) \leq \delta$, the inequality below is valid:
\begin{displaymath}
    J\big(t, w(\cdot), u(\cdot \mid t, w(\cdot), U^\circ, \Delta)\big)
    \leq \Val(t, w(\cdot)) + \varepsilon,
\end{displaymath}
or, in other words, the control $u(\cdot \mid t, w(\cdot), U^\circ, \Delta)$ is $\varepsilon$-optimal.
Thus, the problem of constructing $\varepsilon$-optimal controls can be reduced to finding an optimal control strategy~$U^\circ$.

The main result of this section is the following.
\begin{theorem} \label{theorem_varphi_rho}
    Let a $ci$-smooth of the order $\alpha$ functional $\varphi: G \rightarrow \mathbb{R}$ satisfies Hamilton--Jacobi--Bellman equation \cref{HJB} and right-end condition \cref{terminal_condition}.
    Then,
    \begin{equation} \label{theorem_varphi_rho_main}
        \varphi(t, w(\cdot))
        = \Val(t, w(\cdot)),
        \quad (t, w(\cdot)) \in G,
    \end{equation}
    and the control strategy
    \begin{equation} \label{U^0}
        U^\circ(t, w(\cdot))
        \in \arg \min_{u \in P} \big( \langle \nabla^\alpha \varphi(t, w(\cdot)), f(t, w(t), u) \rangle + \chi(t, w(t), u) \big),
    \end{equation}
    where $(t, w(\cdot)) \in G^0$, is optimal.
\end{theorem}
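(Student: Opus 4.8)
The plan is to establish the two inequalities $\varphi(t, w(\cdot)) \leq \Val(t, w(\cdot))$ and $\Val(t, w(\cdot)) \leq \varphi(t, w(\cdot))$ for every $(t, w(\cdot)) \in G^0$, the first by a verification argument valid for an arbitrary admissible control and the second together with the optimality of $U^\circ$ by analysing the stepwise feedback motion; the equality on all of $G$ then follows from \cref{terminal_condition,Val_T} at the positions with $t = T$. The key tool throughout is the total-derivative formula \cref{main_Lem_formula} of \cref{lemma_formula}.

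For the first inequality, I fix $(t, w(\cdot)) \in G^0$ and an arbitrary $u(\cdot) \in \mU(t, T)$ with motion $x(\cdot) = x(\cdot \mid t, w(\cdot), T, u(\cdot)) \in X(t, w(\cdot))$. Applying \cref{lemma_formula} to $\omega(\tau) = \varphi(\tau, x_\tau(\cdot))$ and using $(^C D^\alpha x)(\tau) = f(\tau, x(\tau), u(\tau))$ together with \cref{HJB} and the fact that $\H$ in \cref{Hamiltonian} is a minimum, I obtain $\rd \omega(\tau) / \rd \tau = \partial_t^\alpha \varphi(\tau, x_\tau(\cdot)) + \langle \nabla^\alpha \varphi(\tau, x_\tau(\cdot)), f(\tau, x(\tau), u(\tau)) \rangle \geq - \chi(\tau, x(\tau), u(\tau))$ for a.e.\ $\tau$. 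Integrating over $[t, \theta]$ for $\theta < T$, letting $\theta \uparrow T$ (using continuity of $\omega$ on $[t, T]$ and integrability of $\chi$), and invoking \cref{terminal_condition} gives $\sigma(x(T)) - \varphi(t, w(\cdot)) \geq - \int_t^T \chi(\tau, x(\tau), u(\tau)) \, \rd \tau$, that is, $\varphi(t, w(\cdot)) \leq J(t, w(\cdot), u(\cdot))$. Taking the infimum over $u(\cdot)$ yields $\varphi(t, w(\cdot)) \leq \Val(t, w(\cdot))$.

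For the reverse inequality and the optimality of $U^\circ$, I fix $(t, w(\cdot)) \in G^0$ and $\varepsilon > 0$, let $u(\cdot) = u(\cdot \mid t, w(\cdot), U^\circ, \Delta)$ with motion $x(\cdot)$, and analyse each step $[\tau_j, \tau_{j + 1})$, on which $u \equiv u_j = U^\circ(\tau_j, x_{\tau_j}(\cdot))$. Since $x(\tau_j) = x_{\tau_j}(\tau_j)$, the defining property \cref{U^0} with \cref{Hamiltonian,HJB} gives $\partial_t^\alpha \varphi(\tau_j, x_{\tau_j}(\cdot)) + \langle \nabla^\alpha \varphi(\tau_j, x_{\tau_j}(\cdot)), f(\tau_j, x(\tau_j), u_j) \rangle + \chi(\tau_j, x(\tau_j), u_j) = \partial_t^\alpha \varphi(\tau_j, x_{\tau_j}(\cdot)) + \H(\tau_j, x(\tau_j), \nabla^\alpha \varphi(\tau_j, x_{\tau_j}(\cdot))) = 0$. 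By \cref{lemma_formula}, the integrand $\rd \omega(\tau) / \rd \tau + \chi(\tau, x(\tau), u_j)$ on this step equals $\partial_t^\alpha \varphi(\tau, x_\tau(\cdot)) + \langle \nabla^\alpha \varphi(\tau, x_\tau(\cdot)), f(\tau, x(\tau), u_j) \rangle + \chi(\tau, x(\tau), u_j)$, whose value at $\tau_j$ is $0$; so the whole point is to bound this integrand uniformly by, say, $\varepsilon$ once $\diam(\Delta)$ is small. Integrating then gives $J(t, w(\cdot), u(\cdot)) = \varphi(t, w(\cdot)) + \int_t^T (\rd \omega / \rd \tau + \chi) \, \rd \tau \leq \varphi(t, w(\cdot)) + \varepsilon (T - t)$, which, combined with $J \geq \Val$, yields simultaneously $\Val(t, w(\cdot)) \leq \varphi(t, w(\cdot))$ and the $\varepsilon$-optimality of $U^\circ$.

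The main obstacle is precisely this uniform control of the per-step mismatch, compounded by the terminal-boundary issue. The difficulty is that $U^\circ$ evaluates $\nabla^\alpha \varphi$ and the argmin at the node $(\tau_j, x_{\tau_j}(\cdot))$, while the total-derivative integrand is taken at the running position $(\tau, x_\tau(\cdot))$, and the estimate must be uniform over $j$ and over the partition-dependent motion. Here the compactness results of \cref{proposition_XG} are essential: for a fixed $\theta < T$ the set $G^\theta_\ast(t, w(\cdot)) = \{(\tau', w'(\cdot)) \in G_\ast(t, w(\cdot)) : \tau' \leq \theta\}$ is a compact subset of $G^0$ (as in the proof of \cref{theorem_differentiability}), on which $\partial_t^\alpha \varphi$ and $\nabla^\alpha \varphi$ are uniformly continuous, while part $ii)$ gives a uniform Hölder bound on $x(\cdot)$ so that, via \cref{proposition_dist}, $\dist((\tau, x_\tau(\cdot)), (\tau_j, x_{\tau_j}(\cdot)))$ is controlled by $\diam(\Delta)$; together with uniform continuity of $f$ and $\chi$ on $[0, T] \times B(R_\ast) \times P$ this makes the mismatch uniformly small on $[t, \theta]$. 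The short terminal interval $[\theta, T]$, where positions leave $G^0$, is absorbed into the error by choosing $T - \theta$ small and using boundedness of $\chi$ and continuity of $\varphi$. Finally, \cref{terminal_condition,Val_T} extend $\varphi = \Val$ to the positions with $t = T$, giving \cref{theorem_varphi_rho_main} on all of $G$.
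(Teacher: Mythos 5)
Your proposal is correct and follows essentially the same route as the paper's proof: your first inequality is the paper's \cref{lemma_first} (verification via \cref{lemma_formula}, the HJB equation, and the minimum structure of the Hamiltonian, with the terminal boundary handled via continuity of $\omega$ near $T$), and your second part, establishing $\Val \leq \varphi$ together with the optimality of $U^\circ$, is the paper's \cref{lemma_second} (node equality from the argmin definition of $U^\circ$ plus \cref{HJB}, uniform control of the running-position-versus-node mismatch via the compactness in \cref{proposition_XG} and the metric estimates in \cref{proposition_dist}, and a short terminal interval absorbed using boundedness of $\chi$ and continuity of $\varphi$). The only cosmetic difference is that the paper includes the running cost integral in the auxiliary function $\omega$, whereas you integrate the inequality $\rd \omega / \rd \tau \geq - \chi$ directly.
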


The proof of the theorem follows the scheme from, e.g., \cite[Theorem~3.1]{Lukoyanov_2011_Eng} (see also \cite[Theorem~3.1]{Lukoyanov_2003} and \cite[Theorem~1]{Gomoyunov_Plaksin_2018_IFAC}), and, for convenience, is divided into the following two lemmas, which are valid under the assumptions of the theorem.

\begin{lemma} \label{lemma_first}
    For any $(t, w(\cdot)) \in G^0$, the inequality below holds:
    \begin{equation} \label{Lem_leq_main}
        \Val(t, w(\cdot))
        \geq \varphi(t, w(\cdot)).
    \end{equation}
\end{lemma}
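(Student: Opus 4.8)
The plan is to read the claim through the definition \cref{Val} of the value functional. Since $\Val(t, w(\cdot)) = \inf_{u(\cdot) \in \mU(t, T)} J(t, w(\cdot), u(\cdot))$, inequality \cref{Lem_leq_main} is equivalent to the assertion that $\varphi(t, w(\cdot)) \leq J(t, w(\cdot), u(\cdot))$ holds for \emph{every} admissible control. So first I would fix an arbitrary $u(\cdot) \in \mU(t, T)$, let $x(\cdot) = x(\cdot \mid t, w(\cdot), T, u(\cdot))$ be the corresponding motion of system \cref{system}, and study the scalar function $\omega(\tau) = \varphi(\tau, x_\tau(\cdot))$, $\tau \in [t, T]$. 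Note that $x(\cdot) \in X(t, w(\cdot))$, so \cref{lemma_formula} applies.

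Next I would convert the hypotheses into a differential inequality for $\omega$. Fix $\theta \in [t, T)$. By \cref{lemma_formula}, $\omega$ is Lipschitz on $[t, \theta]$ and its derivative is given by \cref{main_Lem_formula}. Since $(^C D^\alpha x)(\tau) = f(\tau, x(\tau), u(\tau))$ for a.e.\ $\tau$ by \cref{system}, and since $\varphi$ satisfies \cref{HJB}, so that $\partial_t^\alpha \varphi(\tau, x_\tau(\cdot)) = - \H(\tau, x(\tau), \nabla^\alpha \varphi(\tau, x_\tau(\cdot)))$, the definition \cref{Hamiltonian} of $\H$ as a minimum over $P$ gives, for a.e.\ $\tau \in [t, \theta]$,
\begin{displaymath}
    \frac{\rd \omega(\tau)}{\rd \tau}
    = - \H\big(\tau, x(\tau), \nabla^\alpha \varphi(\tau, x_\tau(\cdot))\big)
    + \big\langle \nabla^\alpha \varphi(\tau, x_\tau(\cdot)), f(\tau, x(\tau), u(\tau)) \big\rangle
    \geq - \chi(\tau, x(\tau), u(\tau)),
\end{displaymath}
because the term with $f$ is cancelled by the corresponding term inside the Hamiltonian and the remaining running cost $\chi(\tau, x(\tau), u(\tau))$ bounds the minimum from above. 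Integrating over $[t, \theta]$ and rearranging yields $\varphi(t, w(\cdot)) \leq \varphi(\theta, x_\theta(\cdot)) + \int_{t}^{\theta} \chi(\tau, x(\tau), u(\tau)) \, \rd \tau$.

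The concluding step is the passage $\theta \uparrow T$. Here I would use that, although \cref{main_Lem_formula} is only asserted on $[t, \theta]$ with $\theta < T$, the same lemma guarantees that $\omega$ is continuous on the closed interval $[t, T]$; hence $\varphi(\theta, x_\theta(\cdot)) \to \varphi(T, x_T(\cdot)) = \sigma(x(T))$ by the right-end condition \cref{terminal_condition}. For the integral term, $x(\cdot)$ is bounded on $[0, T]$ by \cref{proposition_properties} and $\chi$ is continuous by \cref{Assumption_sigma_chi}, so $\tau \mapsto \chi(\tau, x(\tau), u(\tau))$ is bounded and measurable, and the integral converges to $\int_{t}^{T} \chi(\tau, x(\tau), u(\tau)) \, \rd \tau$. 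Passing to the limit therefore gives $\varphi(t, w(\cdot)) \leq \sigma(x(T)) + \int_{t}^{T} \chi \, \rd \tau = J(t, w(\cdot), u(\cdot))$, and taking the infimum over $u(\cdot) \in \mU(t, T)$ establishes \cref{Lem_leq_main}.

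The one delicate point, and the part I expect to require the most care, is precisely this boundary limit at $\tau = T$: the $ci$-differentiation machinery and the Lipschitz estimate of \cref{lemma_formula} are only available strictly below $T$ (the set $G^0$ excludes $t = T$), so the argument cannot differentiate $\omega$ at the endpoint and must instead rely on the continuity of $\omega$ up to $T$ together with the right-end condition \cref{terminal_condition}. Everything else—the cancellation producing $\rd\omega/\rd\tau \geq -\chi$ and the integration—is routine once \cref{lemma_formula}, \cref{HJB}, and \cref{Hamiltonian} are combined.
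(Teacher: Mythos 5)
Your proposal is correct and follows essentially the same route as the paper's proof: both apply \cref{lemma_formula} together with \cref{HJB} and the min structure of \cref{Hamiltonian} to obtain a one-sided differential inequality along the motion on $[t,\theta]$ with $\theta < T$, and both resolve the endpoint $\tau = T$ via the continuity of $\tau \mapsto \varphi(\tau, x_\tau(\cdot))$ guaranteed by \cref{lemma_formula} combined with \cref{terminal_condition}. The only cosmetic difference is that the paper absorbs the running cost into $\omega$ and uses an $\varepsilon$-choice of $\theta$ near $T$, whereas you keep the cost separate and pass to the limit $\theta \uparrow T$; these are equivalent.
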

\begin{proof}
    Let us fix $(t, w(\cdot)) \in G^0$, take $u(\cdot) \in \mU(t, T)$, and show that
    \begin{equation} \label{proof_Lem_leq_main}
        J(t, w(\cdot), u(\cdot))
        \geq \varphi(t, w(\cdot)).
    \end{equation}

    Let us consider the function
    \begin{equation} \label{omega}
        \omega(\tau)
        = \varphi(\tau, x_\tau(\cdot)) + \int_{t}^{\tau} \chi(\xi, x(\xi), u(\xi)) \, \rd \xi,
        \quad \tau \in [t, T],
    \end{equation}
    where $x(\cdot) = x(\cdot \mid t, w(\cdot), T, u(\cdot))$ is the motion of system \cref{system}.
    Let $\varepsilon > 0$ be fixed.
    Due to \cref{lemma_formula} and continuity of $\chi$, the function $\omega(\cdot)$ is continuous, and, therefore, one can choose $\theta \in (t, T)$ such that
    \begin{equation} \label{proof_Lem_leq_main_first}
        \omega(T) - \omega(\theta)
        \geq - \varepsilon.
    \end{equation}
    Moreover, the function $\omega(\cdot)$ is Lipschitz continuous on $[t, \theta]$ and, for a.e. $\tau \in [t, \theta]$,
    \begin{equation} \label{omega_dot}
        \frac{\rd \omega(\tau)}{\rd \tau}
        = \partial_t^\alpha \varphi (\tau, x_\tau(\cdot))
        + \langle \nabla^\alpha \varphi (\tau, x_\tau(\cdot)), f(\tau, x(\tau), u(\tau)) \rangle
        + \chi(\tau, x(\tau), u(\tau)).
    \end{equation}
    Further, according to \cref{Hamiltonian,HJB}, we have
    \begin{multline*}
            \partial_t^\alpha \varphi (\tau, x_\tau(\cdot))
            + \langle \nabla^\alpha \varphi (\tau, x_\tau(\cdot)), f(\tau, x(\tau), u(\tau)) \rangle
            + \chi(\tau, x(\tau), u(\tau)) \\
            \geq \partial_t^\alpha \varphi (\tau, x_\tau(\cdot))
            + \H \big(\tau, x(\tau), \nabla^\alpha \varphi (\tau, x_\tau(\cdot)) \big)
            = 0,
            \quad t \in [t, \theta].
    \end{multline*}
    Consequently, we derive $\rd \omega(t) / \rd t \geq 0$ for a.e. $\tau \in [t, \theta]$, and, hence, $\omega(\theta) - \omega(t) \geq 0$.
    Thus, it follows from \cref{proof_Lem_leq_main_first} that $\omega(T) - \omega(t) \geq - \varepsilon$.
    Since this inequality holds for every $\varepsilon > 0$, then $\omega(T) - \omega(t) \geq 0$.
    Taking into account that, due to \cref{terminal_condition,omega},
    \begin{equation} \label{omega_t_T}
        \omega(t) = \varphi(t, w(\cdot)),
        \quad
        \omega(T)
        = J(t, w(\cdot), u(\cdot)),
    \end{equation}
    the obtained estimate yields \cref{proof_Lem_leq_main}.

    Since \cref{proof_Lem_leq_main} is valid for every $u(\cdot) \in \mU(t, T)$, then, in accordance with \cref{Val}, we get \cref{Lem_leq_main}.
    The lemma is proved.
\end{proof}

\begin{lemma} \label{lemma_second}
    For any $(t, w(\cdot)) \in G^0$ and $\varepsilon > 0$, there exists $\delta > 0$ such that, for any partition $\Delta$ {\rm(}see \cref{Delta}{\rm)} satisfying $\diam(\Delta) \leq \delta$, the inequality below holds:
    \begin{displaymath}
        J\big(t, w(\cdot), u(\cdot \mid t, w(\cdot), U^\circ, \Delta)\big)
        \leq \varphi(t, w(\cdot)) + \varepsilon.
    \end{displaymath}
\end{lemma}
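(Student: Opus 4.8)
The plan is to mimic the argument of \cref{lemma_first}, but now along the motion generated by the feedback law $\{U^\circ, \Delta\}$ and with the extremal property \cref{U^0} of $U^\circ$ playing the role of the Hamiltonian inequality used there. Fix $(t, w(\cdot)) \in G^0$ and $\varepsilon > 0$. For a partition $\Delta$ as in \cref{Delta}, let $u(\cdot) = u(\cdot \mid t, w(\cdot), U^\circ, \Delta)$ and let $x(\cdot) = x(\cdot \mid t, w(\cdot), T, u(\cdot))$ be the corresponding motion. Since $\|(^C D^\alpha x)(\tau)\| = \|f(\tau, x(\tau), u(\tau))\| \leq (1 + \|x(\tau)\|) c_f$ for a.e.\ $\tau$, we have $x(\cdot) \in X_\ast(t, w(\cdot))$ (see \cref{X_ast}), and hence $(\tau, x_\tau(\cdot)) \in G_\ast(t, w(\cdot))$ for all $\tau \in [0, T]$, \emph{independently of $\Delta$}. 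By \cref{proposition_XG}, this set is compact and every such motion obeys the common bound $R_\ast$ and the common H\"{o}lder constant $H_\ast$; this uniformity is what will let a single $\delta$ serve all partitions. As in \cref{lemma_first}, set $\omega(\tau) = \varphi(\tau, x_\tau(\cdot)) + \int_t^\tau \chi(\xi, x(\xi), u(\xi))\, \rd\xi$; by \cref{terminal_condition} one has $\omega(t) = \varphi(t, w(\cdot))$ and $\omega(T) = J(t, w(\cdot), u(\cdot))$, so it suffices to prove $\omega(T) - \omega(t) \leq \varepsilon$.

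Because the $ci$-derivatives, and thus \cref{lemma_formula}, are available only on $G^0$, that is, strictly before $T$, I would first shave off a terminal piece. Using continuity of $\varphi$ on the compact set $G_\ast(t, w(\cdot))$, a uniform bound on $\chi$ over the ball $B(R_\ast)$, and the estimate $\dist((\theta, x_\theta(\cdot)), (T, x_T(\cdot))) \leq (T - \theta) + H_\ast (T - \theta)^\alpha$ from \cref{proposition_dist}, one can fix $\theta \in (t, T)$, independent of $\Delta$, with $\omega(T) - \omega(\theta) \leq \varepsilon/2$. It then remains to bound $\omega(\theta) - \omega(t)$, where $(\tau, x_\tau(\cdot))$ stays in the compact subset of $G_\ast(t, w(\cdot))$ formed by positions with time at most $\theta$, which lies in $G^0$, so that \cref{lemma_formula} applies on $[t, \theta]$.

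The crux is the behaviour of $\rd\omega/\rd\tau$ at the partition nodes. On $[\tau_j, \tau_{j+1})$ the control is the constant $u_j = U^\circ(\tau_j, x_{\tau_j}(\cdot))$, so $(^C D^\alpha x)(\tau) = f(\tau, x(\tau), u_j)$ there, and \cref{lemma_formula} gives, for a.e.\ such $\tau$,
\[
    \frac{\rd\omega(\tau)}{\rd\tau}
    = \partial_t^\alpha \varphi(\tau, x_\tau(\cdot))
    + \langle \nabla^\alpha \varphi(\tau, x_\tau(\cdot)), f(\tau, x(\tau), u_j) \rangle
    + \chi(\tau, x(\tau), u_j).
\]
Evaluated at the node $\tau = \tau_j$, the extremal choice \cref{U^0} collapses the last two terms into $\H(\tau_j, x(\tau_j), \nabla^\alpha \varphi(\tau_j, x_{\tau_j}(\cdot)))$ (see \cref{Hamiltonian}), and then \cref{HJB} makes the whole right-hand side vanish. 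Hence the integrand equals its value at $\tau_j$ up to its own oscillation over $[\tau_j, \tau]$. Exactly as in the construction of $\mu$ in the proof of \cref{theorem_differentiability}, uniform continuity of $\partial_t^\alpha \varphi$ and $\nabla^\alpha \varphi$ on the above compact set, continuity of $f$ and $\chi$, and the bound $\dist((\tau, x_\tau(\cdot)), (\tau_j, x_{\tau_j}(\cdot))) \leq \delta + H_\ast \delta^\alpha$ from \cref{proposition_dist} let me choose $\delta > 0$ so that, whenever $\diam(\Delta) \leq \delta$, this oscillation is at most $\varepsilon/(2(T - t))$. Therefore $\rd\omega(\tau)/\rd\tau \leq \varepsilon/(2(T - t))$ for a.e.\ $\tau \in [t, \theta]$; integrating yields $\omega(\theta) - \omega(t) \leq \varepsilon/2$, and adding the terminal piece gives $\omega(T) - \omega(t) \leq \varepsilon$, which is the assertion.

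The main obstacle is not any single estimate but their \emph{uniformity} in $\Delta$: the same $\delta$ must control every admissible partition and every motion it generates. This is exactly what the compactness of $G_\ast(t, w(\cdot))$ from \cref{proposition_XG} supplies, since all feedback motions live in $X_\ast(t, w(\cdot))$ with the common constants $R_\ast$ and $H_\ast$, while the nodal values $u_j$ merely range over the compact set $P$, over which the modulus estimate from the proof of \cref{theorem_differentiability} is already uniform. The secondary difficulty, the failure of $\omega$ to be differentiable up to $T$, is handled by the $\theta$-truncation, which needs only the continuity of $\omega$ on $[t, T]$ granted by \cref{lemma_formula}.
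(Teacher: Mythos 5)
Your proposal is correct and follows essentially the same route as the paper's proof: the same $\theta$-truncation of the terminal piece using compactness of $G_\ast(t, w(\cdot))$ from \cref{proposition_XG} together with the H\"{o}lder/distance estimate from \cref{proposition_dist}, and the same nodal argument in which the oscillation bound on $\mu$ (borrowed from the proof of \cref{theorem_differentiability}) combines with the extremal property \cref{U^0} and equation \cref{HJB} to make $\rd \omega / \rd \tau$ small on $[t, \theta]$. The only differences are cosmetic (an oscillation tolerance of $\varepsilon/(2(T-t))$ instead of $\varepsilon/(2(\theta - t))$), and the argument, including its uniformity in $\Delta$, matches the paper's.
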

\begin{proof}
    Let $(t, w(\cdot)) \in G^0$ and $\varepsilon > 0$ be fixed.
    Let $R_\ast$ and $H_\ast$ be taken from \cref{proposition_XG}.
    Due to continuity of $\varphi: G \rightarrow \mathbb{R}$ and compactness of $G_\ast(t, w(\cdot))$ (see \cref{proposition_XG}), let us choose $\eta_1 > 0$ such that, for any $(t^\prime, w^\prime(\cdot))$, $(t^{\prime \prime}, w^{\prime \prime}(\cdot)) \in G_\ast(t, w(\cdot))$ satisfying $\dist((t^\prime, w^\prime(\cdot)), (t^{\prime \prime}, w^{\prime \prime}(\cdot))) \leq \eta_1 + H_\ast \eta_1^\alpha$, we have
    \begin{displaymath}
        |\varphi(t^\prime, w^\prime(\cdot)) - \varphi(t^{\prime \prime}, w^{\prime \prime}(\cdot))|
        \leq \varepsilon / 4.
    \end{displaymath}
    Since $\chi$ is continuous, then there exists $M_\chi > 0$ such that \cref{M_chi} is valid where $R_\ast$ is substituted instead of $M_x$.
    Let us choose $\eta_2 > 0$ from the condition $M_\chi \eta_2 \leq \varepsilon / 4$ and take $\theta \in (t, T)$ such that $T - \theta \leq \min\{\eta_1, \eta_2\}$.
    Then, according to \cref{proposition_dist,proposition_XG}, we obtain the following property.
    Let $u(\cdot) \in \mU(t, T)$, and let $x(\cdot) = x(\cdot \mid t, w(\cdot), T, u(\cdot))$ be the corresponding motion of system \cref{system}.
    Then,
    \begin{equation} \label{lemma_second_proof_eta}
        \varphi(T, x(\cdot)) + \int_{\theta}^{T} \chi(\tau, x(\tau), u(\tau)) \, \rd \tau - \varphi(\theta, x_\theta(\cdot))
        \leq \varepsilon / 2.
    \end{equation}
    Further, arguing as in the proof of \cref{theorem_differentiability}, let us choose $\delta > 0$ such that, if $u(\cdot) \in \mU(t, T)$, $x(\cdot) = x(\cdot \mid t, w(\cdot), T, u(\cdot))$, and $u^\prime \in P$, then the function $\mu(\cdot)$ defined by \cref{mu} where the functional $\varphi$ is substituted instead of $\Val$ satisfies the inequality $|\mu(\tau) - \mu(\tau^\prime)| \leq \varepsilon / (2 (\theta - t))$ for any $\tau,$ $\tau^\prime \in [t, \theta]$ satisfying $|\tau - \tau^\prime | \leq \delta$.
    Let us show that the statement of the lemma is valid for the chosen $\delta$.

    Let us fix a partition $\Delta$ such that $\diam(\Delta) \leq \delta$ and consider the control $u(\cdot) = u(\cdot \mid t, w(\cdot), U^\circ, \Delta)$ and the motion $x(\cdot) = x(\cdot \mid t, w(\cdot), T, u(\cdot))$ of system \cref{system} formed by the control law $\{U^\circ, \Delta\}$.
    As in the proof of \cref{lemma_first}, let us define the function $\omega(\cdot)$ by \cref{omega}.
    Due to \cref{lemma_second_proof_eta}, we have $\omega(T) - \omega(\theta) \leq \varepsilon / 2$.
    Therefore, according to \cref{omega_t_T}, in order to complete the proof, it is sufficient to verify that $\omega(\theta) - \omega(t) \leq \varepsilon / 2$.
    Hence, by \cref{omega_dot}, it remains to show that, for every $\tau \in [t, \theta)$,
    \begin{multline} \label{lemma_second_proof_inequality}
        \partial_t^\alpha \varphi (\tau, x_\tau(\cdot))
        + \langle \nabla^\alpha \varphi (\tau, x_\tau(\cdot)), f(\tau, x(\tau), u(\tau)) \rangle
        + \chi(\tau, x(\tau), u(\tau)) \\
        \leq \varepsilon / (2 (\theta - t)).
    \end{multline}

    Let $\tau \in [t, \theta)$, and let $j \in \overline{1, k}$ be such that $\tau \in [\tau_j, \tau_{j + 1})$.
    By the choice of $\delta$,
    \begin{multline*}
        \partial_t^\alpha \varphi (\tau, x_\tau(\cdot))
        + \langle \nabla^\alpha \varphi (\tau, x_\tau(\cdot)), f(\tau, x(\tau), u(\tau)) \rangle
        + \chi(\tau, x(\tau), u(\tau)) \\
        \leq \partial_t^\alpha \varphi (\tau_j, x_{\tau_j}(\cdot))
        + \langle \nabla^\alpha \varphi (\tau_j, x_{\tau_j}(\cdot)), f(\tau_j, x(\tau_j), u(\tau)) \rangle
        + \chi(\tau_j, x(\tau_j), u(\tau)) \\
        + \varepsilon / (2 (\theta - t)).
    \end{multline*}
    Since, from \cref{control_law,U^0,Hamiltonian}, it follows that $u(\tau) = u_j = U^\circ(\tau_j, x_{\tau_j}(\cdot))$ and
    \begin{displaymath}
        \langle \nabla^\alpha \varphi (\tau_j, x_{\tau_j}(\cdot)), f(\tau_j, x(\tau_j), u_j) \rangle
        + \chi(\tau_j, x(\tau_j), u_j)
        = \H \big( \tau_j, x(\tau_j), \nabla^\alpha \varphi(\tau_j, x_{\tau_j}(\cdot)) \big),
    \end{displaymath}
    then, due to \cref{HJB}, we obtain
    \begin{multline*}
        \partial_t^\alpha \varphi (\tau_j, x_{\tau_j}(\cdot))
        + \langle \nabla^\alpha \varphi (\tau_j, x_{\tau_j}(\cdot)), f(\tau_j, x(\tau_j), u_j) \rangle
        + \chi(\tau_j, x(\tau_j), u_j) \\
        = \partial_t^\alpha \varphi (\tau_j, x_{\tau_j}(\cdot))
        + \H \big( \tau_j, x(\tau_j), \nabla^\alpha \varphi(\tau_j, x_{\tau_j}(\cdot)) \big)
        = 0.
    \end{multline*}
    Thus, \cref{lemma_second_proof_inequality} holds, and the lemma is proved.
\end{proof}

\begin{proof}[Proof of \cref{theorem_varphi_rho}]
    Let $(t, w(\cdot)) \in G$ be fixed.
    If $t = T$, then \cref{theorem_varphi_rho_main} is a consequence of \cref{terminal_condition,Val_T}.
    Let us suppose that $t < T$.
    Then, due to \cref{lemma_second}, in accordance with \cref{Val}, we get $\Val(t, w(\cdot)) \leq \varphi(t, w(\cdot))$.
    Hence, applying \cref{lemma_first}, we conclude \cref{theorem_varphi_rho_main}.
    Finally, the optimality of the strategy $U^\circ$ follows directly from \cref{theorem_varphi_rho_main,lemma_second}.
    The theorem is proved.
\end{proof}

From \cref{theorem_continuity,theorem_differentiability,theorem_varphi_rho,Val_T}, we derive the following.
\begin{corollary} \label{Corollary_Final}
    If the value functional $\Val$ {\rm(}see \cref{Val}{\rm)} is $ci$-smooth of the order $\alpha$, then the control strategy defined by $\Val$ according to \cref{U^0} is optimal.
\end{corollary}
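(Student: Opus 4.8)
The plan is to verify that the value functional $\Val$ fulfills all the hypotheses of \cref{theorem_varphi_rho} with $\varphi = \Val$, and then invoke that theorem directly. Since the corollary is a synthesis of the preceding results, no new estimates are needed; the entire task is to check that the three requirements imposed on $\varphi$ in \cref{theorem_varphi_rho}---namely $ci$-smoothness of the order $\alpha$, the Hamilton--Jacobi--Bellman equation \cref{HJB}, and the right-end condition \cref{terminal_condition}---are met by $\Val$.

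First I would observe that $ci$-smoothness of $\Val$ of the order $\alpha$ is precisely the standing assumption of the corollary, so that requirement holds by hypothesis. (One may additionally recall that continuity of $\Val$, which is part of $ci$-smoothness, is in fact unconditionally guaranteed by \cref{theorem_continuity}, so the only genuine content of the assumption is the $ci$-differentiability together with continuity of $\partial_t^\alpha \Val$ and $\nabla^\alpha \Val$.) Next I would invoke \cref{theorem_differentiability}: since $\Val$ is assumed $ci$-smooth, that theorem yields at once that $\Val$ satisfies the Hamilton--Jacobi--Bellman equation \cref{HJB} at every $(t, w(\cdot)) \in G^0$. Finally, the right-end condition \cref{terminal_condition} for the choice $\varphi = \Val$ is nothing but the terminal condition \cref{Val_T}, which holds by the very definition \cref{Val} of the value functional evaluated at $t = T$.

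With all three hypotheses verified, I would apply \cref{theorem_varphi_rho} with $\varphi = \Val$. Its conclusion then furnishes both the identity $\varphi = \Val$ (which is vacuous in the present case) and, more to the point, the optimality of the control strategy $U^\circ$ defined through \cref{U^0} by the $ci$-gradient $\nabla^\alpha \Val$ of $\Val$. This is exactly the assertion of the corollary.

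The point to watch is simply the consistency of the hypotheses: one must confirm that the $ci$-smoothness assumption of the corollary supplies everything that \cref{theorem_varphi_rho} demands, and that the terminal condition \cref{Val_T} coincides verbatim with the right-end condition \cref{terminal_condition}. Since there is no genuine analytic obstacle here---the substantive work having already been carried out in \cref{theorem_differentiability,theorem_varphi_rho}---I expect the proof to reduce to this short chain of implications and to occupy only a few lines.
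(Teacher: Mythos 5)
Your proof is correct and follows exactly the paper's own route: the paper derives \cref{Corollary_Final} by combining \cref{theorem_continuity,theorem_differentiability,theorem_varphi_rho,Val_T}, which is precisely your chain of verifying the hypotheses of \cref{theorem_varphi_rho} with $\varphi = \Val$ (smoothness by assumption, \cref{HJB} via \cref{theorem_differentiability}, and \cref{terminal_condition} via \cref{Val_T}). No discrepancies to report.
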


Thus, this corollary gives a way of constructing an optimal control strategy in the case when the value functional $\Val$ satisfies the additional smoothness assumptions.
Let us note that, in the general case, the desired control strategy can be constructed on the basis of the methods developed in \cite{Gomoyunov_2019_Trudy_Eng,Gomoyunov_2019_DGAA}.

\section{Example: complete solution}
\label{Section_Example_2}

Let us illustrate the results obtained in the paper by solving the example considered in \cref{Section_Example_1}.
Namely, let us calculate the value functional $\rho$ and find an optimal control strategy $U^\circ$ in the optimal control problem for the dynamical system
\begin{displaymath}
    (^C D^\alpha x)(\tau)
    = \Gamma(\alpha + 1) u(\tau),
    \quad x(\tau) \in \mathbb{R}, \quad |u(\tau)| \leq 1, \quad \tau \in [0, T],
\end{displaymath}
and the cost functional
\begin{displaymath}
    J(t, w(\cdot), u(\cdot))
    = x^2(T \mid t, w(\cdot), T, u(\cdot)).
\end{displaymath}

First, let us consider the auxiliary functional
\begin{equation} \label{example_Val_ast}
    \Val_\ast(t, w(\cdot))
    = w(0) + \frac{1}{\Gamma(\alpha)} \int_{0}^{t} \frac{(^C D^\alpha w)(\xi)}{(T - \xi)^{1 - \alpha}} \, \rd \xi,
    \quad (t, w(\cdot)) \in G.
\end{equation}
Let us note that, in accordance with \cref{integral_equation}, $\Val_\ast$ is the value functional of the degenerate optimal control problem for the system $(^C D^\alpha x)(\tau) = 0$, $x(\tau) \in \mathbb{R}$, $\tau \in [0, T]$, and the cost functional $J(t, w(\cdot), u(\cdot)) = x(T \mid t, w(\cdot), T, u(\cdot))$.
Therefore, $\Val_\ast$ is continuous by \cref{theorem_continuity}.
Further, let us fix $(t, w(\cdot)) \in G^0$ and show that $\Val_\ast$ is $ci$-differentiable of the order $\alpha$ at $(t, w(\cdot))$.
For every $x(\cdot) \in X(t, w(\cdot))$ and $\tau \in (t, T)$, we have
\begin{displaymath}
    \Val_\ast(\tau, x_\tau(\cdot)) - \Val_\ast(t, w(\cdot))
    = \frac{1}{\Gamma(\alpha)} \int_{t}^{\tau} \frac{(^C D^\alpha x)(\xi)}{(T - \xi)^{1 - \alpha}} \, \rd \xi.
\end{displaymath}
Let us denote
\begin{displaymath}
    z(\xi)
    = \int_{t}^{\xi} (^C D^\alpha x)(\zeta) \, \rd \zeta,
    \quad \xi \in [t, \tau],
    \quad \bar{H} = \|(^C D^\alpha x)(\cdot)\|_{[0, T]}.
\end{displaymath}
Then, applying the integration by parts formula, we obtain
\begin{displaymath}
    \int_{t}^{\tau} \frac{(^C D^\alpha x)(\xi)}{(T - \xi)^{1 - \alpha}} \, \rd \xi
    = \frac{z(\tau)}{(T - \tau)^{1 - \alpha}}
    - (1 - \alpha) \int_{t}^{\tau} \frac{z(\xi)}{(T - \xi)^{2 - \alpha}} \, \rd \xi.
\end{displaymath}
Hence, taking into account that
\begin{displaymath}
    \Big| \int_{t}^{\tau} \frac{z(\xi)}{(T - \xi)^{2 - \alpha}} \, \rd \xi \Big|
    \leq \int_{t}^{\tau} \frac{\bar{H} (\xi - t)}{(T - \xi)^{2 - \alpha}} \, \rd \xi
    \leq \frac{\bar{H} (\tau - t)^2}{(T - \tau)^{2 - \alpha}}
    = o(\tau - t)
\end{displaymath}
and
\begin{displaymath}
    \Big| \frac{z(\tau)}{(T - \tau)^{1 - \alpha}} - \frac{z(\tau)}{(T - t)^{1 - \alpha}}\Big|
    \leq \bar{H} (\tau - t) \Big( \frac{1}{(T - \tau)^{1 - \alpha}} - \frac{1}{(T - t)^{1 - \alpha}}\Big)
    = o(\tau - t),
\end{displaymath}
we derive
\begin{multline} \label{Val_ast_difference}
    \Val_\ast(\tau, x_\tau(\cdot)) - \Val_\ast(t, w(\cdot))
    = \frac{1}{\Gamma(\alpha)} \frac{z(\tau)}{(T - t)^{1 - \alpha}} + o(\tau - t) \\
    = \frac{1}{\Gamma(\alpha) (T - t)^{1 - \alpha}} \int_{t}^{\tau} (^C D^\alpha x)(\xi) \, \rd \xi + o(\tau - t).
\end{multline}
Here and below, by the same symbol $o(\tau - t)$, we denote different functions with the property that $o(\tau - t) \rightarrow 0$ when $\tau \downarrow t$.
Thus, the functional $\Val_\ast$ is $ci$-differentiable of the order $\alpha$ at $(t, w(\cdot))$, and
\begin{displaymath}
    \partial_t^\alpha \Val_\ast(t, w(\cdot))
    = 0,
    \quad \nabla^\alpha \Val_\ast(t, w(\cdot))
    = \frac{1}{\Gamma(\alpha) (T - t)^{1 - \alpha}}.
\end{displaymath}
Moreover, due to \cref{proposition_dist}, the functional $G^0 \ni (t, w(\cdot)) \rightarrow (T - t)^{\alpha - 1} \in \mathbb{R}$ is continuous, and, therefore, $\Val_\ast$ is $ci$-smooth of the order $\alpha$.

Now, let us consider the functional
\begin{equation} \label{example_varphi}
    \varphi(t, w(\cdot))
    = \begin{cases}
        \big(\Val_\ast(t, w(\cdot)) + (T - t)^\alpha\big)^2, & \mbox{if } \Val_\ast(t, w(\cdot)) < - (T - t)^\alpha, \\
        0, & \mbox{if } |\Val_\ast(t, w(\cdot))| \leq (T - t)^\alpha, \\
        \big(\Val_\ast(t, w(\cdot)) - (T - t)^\alpha\big)^2, & \mbox{if } \Val_\ast(t, w(\cdot)) > (T - t)^\alpha,
    \end{cases}
\end{equation}
where $(t, w(\cdot)) \in G$.
Let us show that $\varphi$ satisfies all the assumptions of \cref{theorem_varphi_rho}.
First of all, we note that continuity of $\varphi$ follows from continuity of $\Val_\ast$ and the functional $G \ni (t, w(\cdot)) \rightarrow (T - t)^\alpha \in \mathbb{R}$ (see \cref{proposition_dist}).
Further, let us show that $\varphi$ is $ci$-differentiable of the order $\alpha$ at every $(t, w(\cdot)) \in G^0$.
Let us fix $(t, w(\cdot)) \in G^0$ and $x(\cdot) \in X(t, w(\cdot))$ and consider the following cases.

Let us suppose that $\Val_\ast(t, w(\cdot)) < - (T - t)^\alpha$.
Then, due to continuity of the function $[0, T] \ni \tau \mapsto (\tau, x_\tau(\cdot)) \in G$ (see \cref{proposition_dist}), there exists $\delta \in (0,  T - t)$ such that $\Val_\ast(\tau, x_{\tau}(\cdot)) < - (T - \tau)^\alpha$ for $\tau \in (t, t + \delta]$.
Hence, for $\tau \in (t, t + \delta]$, we derive
\begin{displaymath}
    \varphi(\tau, x_\tau (\cdot)) - \varphi(t, w(\cdot))
    = \big(\Val_\ast(\tau, x_\tau(\cdot)) + (T - \tau)^\alpha\big)^2
    - \big(\Val_\ast(t, w(\cdot)) + (T - t)^\alpha \big)^2,
\end{displaymath}
and, consequently, taking \cref{Val_ast_difference} into account, we get
\begin{multline*}
    \varphi(\tau, x_\tau (\cdot)) - \varphi(t, w(\cdot))
    = 2 \big(\Val_\ast(t, w(\cdot)) + (T - t)^\alpha \big) \\
    \times \Big( - \frac{\alpha}{(T - t)^{1 - \alpha}} (\tau - t)
    + \frac{1}{\Gamma(\alpha) (T - t)^{1 - \alpha}} \int_{t}^{\tau} (^C D^\alpha x)(\xi) \, \rd \xi \Big)
    + o(\tau - t).
\end{multline*}
Hence, $\varphi$ is $ci$-differentiable of the order $\alpha$ at $(t, w(\cdot))$, and
\begin{displaymath}
    \partial_t^\alpha \varphi(t, w(\cdot))
    = - 2 \alpha \frac{\Val_\ast(t, w(\cdot)) + (T - t)^\alpha}{(T - t)^{1 - \alpha}},
    \quad \nabla^\alpha \varphi(t, w(\cdot))
    = 2 \frac{\Val_\ast(t, w(\cdot)) + (T - t)^\alpha}{\Gamma(\alpha) (T - t)^{1 - \alpha}}.
\end{displaymath}

Further, if $|\Val_\ast(t, w(\cdot))| < (T - t)^\alpha$, then it is clear that $\varphi$ is $ci$-differentiable of the order $\alpha$ at $(t, w(\cdot))$, and $\partial_t^\alpha \varphi(t, w(\cdot)) = \nabla^\alpha \varphi(t, w(\cdot)) = 0$.

Now, let $\Val_\ast(t, w(\cdot)) = - (T - t)^\alpha$.
Then, there exists $\delta \in (0,  T - t)$ such that, for every $\tau \in (t, t + \delta]$, only two cases are possible: either $|\Val_\ast(\tau, x_{\tau}(\cdot))| \leq (T - \tau)^\alpha$ or $\Val_\ast(\tau, x_{\tau}(\cdot)) < - (T - \tau)^\alpha$.
In the first case, we have $\varphi(\tau, x_\tau (\cdot)) - \varphi(t, w(\cdot)) = 0$.
In the second case, we derive
\begin{multline*}
    \varphi(\tau, x_\tau (\cdot)) - \varphi(t, w(\cdot))
    = \big(\Val_\ast(\tau, x_{\tau}(\cdot)) - \Val_\ast(t, w(\cdot)) + (T - \tau)^\alpha - (T - t)^\alpha \big)^2 \\
    = \Big(- \frac{\alpha}{(T - t)^{1 - \alpha}} (\tau - t)
    + \frac{1}{\Gamma(\alpha) (T - t)^{1 - \alpha}} \int_{t}^{\tau} (^C D^\alpha x)(\xi) \, \rd \xi
    + o(\tau - t) \Big)^2 \\
    = o(\tau - t).
\end{multline*}
So, $\varphi$ is $ci$-differentiable of the order $\alpha$ at $(t, w(\cdot))$, $\partial_t^\alpha \varphi(t, w(\cdot)) = \nabla^\alpha \varphi(t, w(\cdot)) = 0$.

By similar arguments, we obtain that $\varphi$ is $ci$-differentiable of the order $\alpha$ at $(t, w(\cdot))$ such that $\Val_\ast(t, w(\cdot)) \geq (T - t)^\alpha$, and
\begin{displaymath}
    \partial_t^\alpha \varphi(t, w(\cdot))
    = 2 \alpha \frac{\Val_\ast(t, w(\cdot)) - (T - t)^\alpha}{(T - t)^{1 - \alpha}},
    \quad \nabla^\alpha \varphi(t, w(\cdot))
    = 2 \frac{\Val_\ast(t, w(\cdot)) - (T - t)^\alpha}{\Gamma(\alpha) (T - t)^{1 - \alpha}}.
\end{displaymath}

Thus, the functional $\varphi$ is $ci$-differentiable of the order $\alpha$ at every $(t, w(\cdot)) \in G^0$.
Moreover, the analysis of the calculated above values of $\partial_t^\alpha \varphi(t, w(\cdot))$ and $\nabla^\alpha \varphi(t, w(\cdot))$ shows that the functionals $\partial_t^\alpha \varphi: G^0 \rightarrow \mathbb{R}$ and $\nabla^\alpha \varphi: G^0 \rightarrow \mathbb{R}$ are continuous, and, therefore, $\varphi$ is $ci$-smooth of the order $\alpha$.

In the considered optimal control problem, $\H(s) = - \Gamma(\alpha + 1) |s|$, $s \in \mathbb{R}$ (see \cref{Hamiltonian}).
Then, Hamilton--Jacobi--Bellman equation \cref{HJB} takes the following form:
\begin{displaymath}
    \partial_t^\alpha \varphi(t, w(\cdot))
    - \Gamma(\alpha + 1) |\nabla^\alpha \varphi(t, w(\cdot))|
    = 0,
    \quad (t, w(\cdot)) \in G^0.
\end{displaymath}
By direct substitution of the values of $\partial_t^\alpha \varphi(t, w(\cdot))$ and $\nabla^\alpha \varphi(t, w(\cdot))$, we conclude that $\varphi$ satisfies this equation.
Further, according to \cref{ID,example_Val_ast,example_varphi}, we have
\begin{displaymath}
    \varphi(T, w(\cdot))
    = \Val_\ast^2(T, w(\cdot))
    = w^2(T),
    \quad w(\cdot) \in \AC([0, T], \mathbb{R}),
\end{displaymath}
and, therefore, $\varphi$ satisfies right-end condition \cref{terminal_condition}.
Consequently, by \cref{theorem_varphi_rho}, $\varphi$ is the value functional of the considered optimal control problem, i.e., $\Val(t, w(\cdot)) = \varphi(t, w(\cdot))$, $(t, w(\cdot)) \in G$, and an optimal control strategy can be determined by
\begin{displaymath}
    U^\circ(t, w(\cdot))
    \in \begin{cases}
        \{ 1 \}, & \mbox{if } \Val_\ast(t, w(\cdot)) < - (T - t)^\alpha, \\
        [-1, 1], & \mbox{if } |\Val_\ast(t, w(\cdot))| \leq (T - t)^\alpha, \\
        \{ - 1\}, & \mbox{if } \Val_\ast(t, w(\cdot)) > (T - t)^\alpha,
    \end{cases}
\end{displaymath}
where $(t, w(\cdot)) \in G^0$.
The example is completely solved.

\begin{remark}
    In the case $\alpha = 1$, the obtained solution agrees with the classical one.
    Namely, since $\Val_\ast(t, w(\cdot)) = \Val_\ast(t, w(t)) = w(t)$, then
    \begin{displaymath}
        \Val(t, w(\cdot))
        = \Val(t, w(t))
        = \begin{cases}
            (w(t) + T - t)^2, & \mbox{if } w(t) < t - T, \\
            0, & \mbox{if } |w(t)| \leq T - t, \\
            (w(t) - T + t)^2, & \mbox{if } w(t) > T - t,
        \end{cases}
    \end{displaymath}
    where $(t, w(\cdot)) \in G$, and
    \begin{displaymath}
        U^\circ(t, w(\cdot))
        = U^\circ(t, w(t))
        \in \begin{cases}
            \{ 1 \}, & \mbox{if } w(t) < t - T, \\
            [-1, 1], & \mbox{if } |w(t)| \leq T - t, \\
            \{ - 1\}, & \mbox{if } w(t) > T - t,
        \end{cases}
    \end{displaymath}
    where $(t, w(\cdot)) \in G^0$.
\end{remark}

\section{Conclusions}
\label{section_conclusion}

In the paper, we have studied a Bolza-type optimal control problem for a fractional-order dynamical system.
We have proposed to consider the value of this problem as a functional in a suitable space of histories of motions.
We have proved that, within this approach, the dynamic programming principle is satisfied.
Further, we have introduced a new notion of fractional coinvariant differentiation of functionals and associated the optimal control problem with a Hamilton--Jacobi--Bellman equation with the fractional coinvariant derivatives.
Under certain smoothness assumptions, we have established a connection between the value functional and solutions to this equation.
In particular, we have proposed a way of constructing an optimal control strategy.
The obtained results have been illustrated by an example.

Nevertheless, it should be noted that the value functional of the considered optimal control problem may not possess the specified smoothness properties.
Therefore, future research will be devoted to developing the theory of generalized (minimax and viscosity) solutions to the obtained Hamilton--Jacobi--Bellman equation and studying their connection with the value functional in the general non-smooth case.

\bibliographystyle{siamplain}
\bibliography{references}

\end{document}